\newtheorem{theorem}{Theorem}[section]
\newtheorem{lemma}{Lemma}[section]
\newtheorem{proposition}{Proposition}[section]
\theoremstyle{definition}
\theoremstyle{remark}
\newtheorem{remark}{Remark}[section]
\newtheorem{corollary}{Corollary}[section]
\newcommand{\R}{\mathbb R}
\newcommand{\Z}{{\mathbb{Z}}}
\newcommand{\Q}{{\mathbb{Q}}}
\begin{document}
\title[Critical points]{Manifolds which admit maps with finitely many critical points 
into spheres of small dimensions}
\author[L.Funar]{Louis Funar}
\address{Institut Fourier,
Laboratoire de Mathematiques UMR 5582,   
Universit\'e Grenoble Alpes,
CS 40700, 
38058 Grenoble, France}
\email{louis.funar@univ-grenoble-alpes.fr} 
\author[C.Pintea]{Cornel Pintea}
\address{Department of Mathematics, "Babe\c{s}-Bolyai" University, 400084 M. Kog\u{a}lniceanu 1,
Cluj-Napoca, Romania}
\email{cpintea@math.ubbcluj.ro}

\begin{abstract}
We construct, for  $m\geq 6$ and $2n\leq m$,  closed manifolds $M^{m}$ with finite nonzero $\varphi(M^{m},S^{n}$),    where $\varphi(M,N)$ denotes the minimum number of critical points of a 
smooth map $M\to N$. We also give some explicit families of examples for  even $m\geq 6, n=3$, taking advantage of the Lie group structure on $S^3$.  Moreover,  there are infinitely many such examples with $\varphi(M^{m},S^{n})=1$. 
Eventually we compute the signature of the manifolds $M^{2n}$ occurring for even $n$.

\noindent MSC Class: 57R45, 57 R70, 58K05.
\end{abstract}
\maketitle

\section{Motivation}
We set $\varphi(M,N)$ for the minimum number of critical points of a smooth map $M\to N$ between compact manifolds, 
which extends the F-category defined and studied by Takens in \cite{Ta}.  
Following the work of Farber (see \cite{Far,FarSchu}) we have: 
\begin{equation}
\varphi(M,S^1)= \left\{\begin{array}{cl}
\varphi(M,\R),&  {\rm if } \;  H^1(M,\Z)=0;\\
0,&  {\rm if }\;  M  \;{\rm fibers \; over } \; S^1;\\
1,& {\rm  otherwise}. 
\end{array}\right. 
\end{equation}
More precisely for any non-zero class $\xi$ in 
$H^1(M,\Z)$ there exists a function $f:M\to S^1$ 
in the homotopy type prescribed  by $\xi$ with at most one  
critical point. This was extended in \cite{FarSchu} 
to closed 1-forms in a prescribed non-zero class in 
$H^1(M,\R)$ having at most one zero. 
The question on whether there is a closed 
non-singular 1-form (i.e. a fibration over $S^1$  
for integral classes) was answered by 
Thurston in dimension 3 (see \cite{Thu}) and Latour 
for $\dim (M)\ge 6$ (see \cite{Lat}). 
Notice that $\varphi(M,\R)\leq \dim M +1$ (see \cite{Ta}). 
 
The aim of this paper is to show that there are examples of manifolds $M^m$ with {\em nontrivial} 
(i.e. finite nonzero) $\varphi(M^{m},S^{\left[\frac{m}{2}\right]-k})$, for $m\geq 6$, $m\geq 2k\geq 0$ where, when present, the superscripts denote the dimensions of the corresponding manifolds and to describe  how to construct all of them for $(m,n)=(6,3)$.

Recall that in  \cite{AF1} the authors found that $\varphi(M^m, N^n)\in\{0, 1,\infty\}$, when $0 \leq  m-n \leq  2$, 
except for the exceptional pairs of dimensions $(m, n)\in\{(2, 2),(4, 3),(4, 2)\}$.
Further, if $m-n = 3$ and there exists a smooth function $M^m \to N^n$ with finitely many critical
points, all of them cone-like, then $\varphi(M^m, N^n)\in \{0, 1\}$ 
except for the exceptional pairs of dimensions $(m, n)\in\{(5, 2),(6, 3),(8, 5)\}$.
On the other hand in \cite{FPZ} the authors provided many nontrivial examples and showed that  
$\varphi(M^m,S^n)$ can take arbitrarily large {\rm even} values for $ m=2n-2$, $n\in\{3,5,9\}$;
these examples  were classified in \cite{Fu} for $n\in\{3,5\}$.

In the first part of the present paper we approach this question by 
elementary methods. In \cite{Fu} the first author outlined a method for constructing manifolds 
with finite  $\varphi(M^6,S^3)$ using generalized {\em Hopf links}, which was further 
detailed in \cite{AHSS}. Our goal is to show that a slight extension 
of this construction provides nontrivial examples  for {\em all dimensions} 
of the form  $(m, \left[\frac{m}{2}\right]-k)$,  where $m\geq 6$, $k\geq 0$, and in particular we can find 
manifolds with $\varphi(M,N)=1$ in this range of dimensions.  In some sense 
these provide other high dimensional analogs of Lefschetz fibrations.  
The simplest approach come from a closed formula computing the Euler characteristic $\chi(M^{2n})$ in terms of the combinatorial 
data used in the construction. We also give some explicit families of examples for  dimensions $(m\geq 6, 3)$, taking advantage of the Lie group structure on $S^3$.  
In particular, we find that $\varphi_c(S^6,S^3)=\infty$, where $\varphi_c$ counts the minimum number of critical points of smooth functions with only cone-like singularities.  
The last part is devoted to computation of signatures which are obstructions to fibration over even dimensional spheres. 
We obtain manifolds with boundary whose signatures are non-zero. 

It would be interesting to know how accurate are our 
estimates  -- compare  with the lower bounds for $\varphi(M^{2n-2}, S^{n})$ obtained in 
\cite{FPZ} --  in order to characterize the set of values taken by $\varphi(M^m,S^n)$. 

Notice that no nontrivial examples are known for $m <2n-2$ and 
the present methods do not apply, though as polynomials maps 
with isolated singularities do exist (\cite{Loo}) for $m-n\geq 4$. 

\section{Constructions of manifolds with finite $\varphi$ and statement of results}

\subsection{Fibered links and local models for isolated singularities}
Recall, following Looijenga (\cite{Loo})  that the isotopy class of the oriented 
submanifold $K=K^{m-n-1}$ of dimension $(m-n-1)$ of $X^{m-1}$ with a trivial 
normal bundle  is called {\em generalized Neuwirth-Stallings fibered} (or $(X^{m-1}, K^{m-n-1})$ 
is a  generalized Neuwirth-Stallings pair) 
if, for some trivialization $\theta: N(K)\to K\times D^n$ of the tubular 
neighborhood $N(K)$ of $K$ in $X^{m-1}$,  
the fiber bundle $\pi\circ \theta: N(K)-K \to S^{n-1}$ admits an extension 
to  a smooth fiber bundle 
$f_K:X^{m-1}- K\to S^{n-1}$. Here $\pi:K\times (D^n-\{0\})\to S^{n-1}$ is the 
composition of the radial 
projection $D^n-\{0\}\to S^{n-1}$ with the second factor projection.
The data $(X^{m-1}, K, f_K, \theta)$ is then called an {\em open book decomposition} with binding $K$ while $K$ is called a {\em fibered link}. 
This is equivalent to the condition that the closure of every fiber is 
its compactification  by the binding link. When $X^{m-1}=S^{m-1}$ we have the classical notions of Neuwirth-Stallings fibrations and pairs. 

Recall now from\cite{Loo,KN,Rud}  that open book decompositions $(S^{m-1}, K, f_K, \theta)$ give rise to 
isolated singularities $\psi_K:(D^m,0)\to (D^n,0)$  by means of the formula:
\[\psi_K(x)=\left\{\begin{array}{cl}
\lambda(\|x\|)f_K\left(\frac{x}{||x||}\right), & {\rm if} \; \frac{x}{||x||}\not\in N(K); \\
\lambda\left( \|x\|\cdot \left\|\pi_2\left(\theta\left(\frac{x}{\|x\|}\right)\right)\right\|\right)f_K\left(\frac{x}{||x||}\right), & {\rm if} \; \frac{x}{||x||}\in N(K); \\
0, & {\rm if }\;  x=0,
\end{array}
\right.
\]
where $\pi_2:K\times D^n\to D^n$ is the projection on the second factor and $\lambda:[0,1]\to [0,1]$ is any smooth 
strictly increasing map sufficiently flat at $0$ and 1 such that $\lambda(0)=0$ and $\lambda(1)=1$. 
If  $K$ is in generic position, namely the space generated by vectors in $\R^m$ with endpoints in $K$ coincides with 
the whole space $\R^m$, then  $(d\psi_K)_{0}=0$, i.e. $\psi_K$ has rank $0$ at the origin. 
We then call such $\psi_K$ {\em local models} of isolated singularities. 

Looijenga in \cite{Loo} proved that a Neuwirth-Stallings pair $(S^{m-1},L^{m-n-1})$  can be realized by a {\em real polynomial} map  
if $L$ is invariant and the open book fibration $f_L$ is equivariant with respect to the antipodal maps.  
In particular, the connected sum $(S^{m-1},K)\sharp ((-1)^{m}S^{m-1}, (-1)^{m-n}K)$ is a Neuwirth-Stallings pair isomorphic to the 
link of a real polynomial isolated singularity $\psi_K:(\R^m,0)\to (\R^n,0)$.

\subsection{Cut and paste  local models}\label{cutpaste}
We can glue together a patchwork of such local models 
to obtain maps $M^{m}\to N^{n}$ with finitely many 
critical points.   
Let $\Gamma$ be a bicolored decorated graph with vertices of two colors. 
Each black vertex $v$ of $\Gamma$ is decorated by  a fibered link $L_v^{m-n-1}$ of $S^{m-1}$. To every vertex 
$v$ there is associated an open book fibration $f_{L_v}:S^{m-1}-N(L_{v})\to S^{n-1}$ which extends to a 
smooth local model map with one critical point $\psi_v=\psi_{L_v}:D_v^{m}\to D^n$. Its generic fibers are called local fibers. 
Each white vertex $w$ is labeled by some $(m-n)$-manifold $F(w)$ 
whose boundary has as many connected components as the  degree of $w$. 

If there are no white vertices, then we glue together the disks $D_v$ using the pattern of the graph $\Gamma$ by identifying 
one component of $N(L_{v})$ to one component 
of $N(L_{w})$  if $v$ and $w$ are adjacent in $\Gamma$. 
The identification has to respect the trivializations  
$N(L_{v})\to D^n$  and hence one can take them to be 
the same as in the double construction. Note that $N(L_v)=L_v\times D^{n}$ and thus identifications respecting the trivialization 
correspond to homotopy classes $[L, {\rm Diff}(D^n,\partial)]$.

Otherwise, we glue together the disks $D_v$  and $F(w)\times D^n$  along part of their boundaries using the 
pattern of the  graph $\Gamma$. One identifies a component of $N(L_{v})$  to a component of 
$\partial F(w)\times  D^n$ whenever there is 
an edge between  $v$ and $w$, such that the two trivializations of these manifolds 
do agree and the fibers of the open book and of the trivial fibration glue together.  In such a case $\partial F(w)$ and the link $L_v$ 
should have the same number of components.  When $F(v)$ is a bunch of cylinders we recover the former construction. We 
then obtain  a manifold with boundary $X(\Gamma)$
endowed with a smooth map $f_{\Gamma}: X(\Gamma)\to D^n$, whose 
singularities correspond to the black vertices.  

The restriction of $f_{\Gamma}$ to the boundary is a locally trivial $F$-fibration over $S^{n-1}$. Let now 
$\Gamma_1,\Gamma_2,\ldots,\Gamma_p$ be a set of bicolored decorated graphs whose associated fibrations  
are {\em cobounding}, namely such that there exists a fibration over 
$S^n\setminus \sqcup_{i=1}^{p}D^n$, generally not unique, extending the boundary 
fibrations restrictions $\psi_{\Gamma_i}:=f_{\Gamma_i}\big|_{\partial X(\Gamma)}$, $1\leq i\leq p$. 
Any such set $\Gamma_1,\Gamma_2,\ldots,\Gamma_p$ determines  
a closed manifold $M(\Gamma_1,\Gamma_2,\ldots,\Gamma_p)$
endowed with a map with finitely many critical points into $S^n$.

In particular, we can realize the double 
of ${f}_{\Gamma}$ by gluing together ${f}_{\Gamma}$ and its mirror image. 
We could also generalize this to maps taking values into 
an arbitrary closed manifold $N^n$.

\subsection{Constructions of fibered links in dimensions $(2n,n)$, $n\geq 3$}\label{half}
Let us recall the construction from \cite{Fu,AHSS}. 
It is known that for $n\geq 3$ there is only 
one embedding of $S^{n-1}$ in $S^{2n-1}$.  The situation undergoes only  
little changes in the case of links. By Haefliger's  classification theorem 
(see \cite{Hae2a,Hae2b})  the link $L=\sqcup_{j=0}^d S^{n-1}_j$ is 
uniquely determined, up to isotopy,  by its linking matrix ${\rm lk}_L$ and we denote it as $L_{{\rm lk} L}$. 
Note that the diagonal entries of ${\rm lk}_L$ are not defined and by convention we set 
them $0$. 

The {\em generalized Hopf links} with $d+1\geq 2$ components are those links $L=\sqcup_{j=0}^d S^{n-1}_j$ 
for which the spheres $S^{n-1}_1, \ldots S^{n-1}_{d}\subset S^{2n-1}$ 
are Hopf duals  to a fixed  {\em preferred} $S^{n-1}_0\subset S^{2n-1}$, namely their linking number 
$lk(S_0,S_j)=\pm 1$, for $j\geq 1$.  We will suppose that $lk(S_0,S_j)=1$, for $j\geq 1$, in the sequel, so that 
the most important information is the linking sub-matrix ${\rm lk}_{L^{\circ}}$ of the sub-link  $L^{\circ}=\sqcup_{j=1}^d S^{n-1}_j$.
Denote by $\widetilde{A}$ a  $(-1)^n$-symmetric matrix obtained from a $d\times d$ matrix $A$ by adding a 
first line and a first column of $1$s with $0$ on the diagonal.  
 
One observed in \cite{Fu} that for every integral  
$(-1)^n$-symmetric $d\times d$ matrix $A$ with 
trivial diagonal the link $L_{\widetilde{A}}$  
has the property that 
its complement $S^{2n-1}\setminus N(L_{\widetilde{A}})$ naturally fibers over $S^{n-1}$. 
The fibers of this fibration are holed disks which intersect transversally every component 
$S_j^{n-1}$, with $j\geq 1$ in one point, while their closure contain $S_0^{n-1}$. 
Note that this fibration come along with a trivialization of the boundary: $\partial N(S_0^{n-1})$ is foliated by  
preferred longitudinal spheres while $\partial N(S_j^{n-1})$, for $j\geq 1$, are foliated by  
preferred meridian spheres.

The fibration of $S^{2n-1}\setminus N(L_{\widetilde{A}})$ does not satisfy the last condition in the definition of a Neuwirth-Stallings pair. Although a link is  always fibered if its complement fibers (not necessarily as 
an open book decomposition)  when $n=2$, by a suitable change of the framing,  this is not so in higher dimensions. 
However, there is a simple way to convert transversal intersections of the fiber 
with $S_j^{n-1}$ into one of binding type  by doing surgery.  
Specifically,  we denote by $X_A^{2n-1}$ the result of gluing together $S^{2n-1}\setminus N(L_{\widetilde{A}})$ and  $(d+1)$ solid tori $S^{n-1}\times D^{n}\sqcup_{j=1}^d D^n\times S^{n-1}$ such that: 
\begin{enumerate}
\item for $j=0$ the solid torus $S^{n-1}\times D^{n}$ is glued along $\partial N(S_0^{n-1})$ 
such that $S^{n-1}\times \{pt\}$ correspond with the preferred longitude spheres;
\item for $j\geq 1$ the $j$-th  copy of the solid torus $D^n\times S^{n-1}$ is glued along  $\partial N(S_j^{n-1})$  
such that $\{pt\}\times S^{n-1}$ correspond to the preferred meridian spheres.  
\end{enumerate}
The cores of the newly attached solid tori form a $(d+1)$-components link 
$K_A^{n-1}=\sqcup_{0}^dS^{n-1}\subset X^{2n-1}_A$. Though as $X_A^{2n-1}$ might not be a sphere in general, 
$(X_A^{2n-1}, K_A)$ is a generalized Neuwirth-Stallings pair. 
Note that the link complements $X_A\setminus N(K_A)$ and $S^{2n-1}\setminus L_{\widetilde{A}}$ are diffeomorphic and the corresponding fibrations match each other. Thus the fibers of the corresponding 
open book fibration $f_{K_A}:X_A\setminus K_A\to S^{n-1}$ are still holed disks. 
We warn the reader that the notions of longitude/meridian spheres do not correspond for the two link complements.

When $X^{2n-1}_A$  is diffeomorphic to $S^{2n-1}$ we obtain  
a classical Neuwirth-Stallings pair  $(S^{2n-1}, K_A)$. 
Furthermore, $X_A^{2n-1}$ is homeomorphic to a sphere $S^{2n-1}$
if and only if $A$ is unimodular, i.e. $\det A=\pm 1$ (see \cite{AHSS}). 
This provides already examples of fibered links $K_A$ in those dimensions when there are no exotic spheres, for instance when $n=3$. Moreover, when  $n=3$ every fibered link over $S^2$ is isotopic to some $K_A$ (see \cite{AHSS,Fu}) since their fibers should be simply connected and hence holed disks.  
This is equally true for $n>3$ if we restrict ourselves to those links whose components are spheres. However, when $n >3$ links of isolated singularities might be non-simply connected links.

Furthermore, since the connected sum $X_A^{2n-1}\sharp \overline{X_A^{2n-1}}$
is diffeomorphic to $S^{2n-1}$ for any $n$ one obtained in (\cite{AHSS}, Corollary 4.2) that the links of the form $K_{A\oplus -A}\subset S^{2n-1}$ are fibered for any $n >3$, if $A$ is unimodular. Notice that the number of components  in this construction satisfies 
$d\equiv 1 ({\rm mod } \; 4)$.  Further, we also have $\sharp_{\theta_{2n-1}}X^{2n-1}$ is diffeomorphic to $S^{2n-1}$, where 
$\theta_{2n-1}$ denotes the order of the group of homotopy spheres in dimension $(2n-1)$.
The connected sum construction due to Looijenga (\cite{Loo}) shows that $K_{\oplus_{1}^{\theta_{2n-1}}A}$ 
is fibered, for any $n>3$, when $A$ is unimodular.

We can therefore use fibered links of the form $K_A^{n-1}\subset S^{2n-1}$, which will be called {\em generalized Hopf links} in the sequel. The cut and paste procedure from section \ref{cutpaste} then produces  manifolds with boundary $X^{2n}(\Gamma)$ 
endowed with maps $\psi_{\Gamma}:X^{2n}(\Gamma)\to D^n$ with finitely many critical points.   
The generic fiber of $\psi_{\Gamma}$  is  $\sharp_{g}S^1\times S^{n-1}$, where $g$ is the rank of 
$H_1(\Gamma)$. If one allows orientation-reversing gluing homeomorphisms then  one could also obtain  non-orientable 
fibers  homeomorphic to a twisted $S^{n-1}$-fibration over the circle. 

The restriction of $\psi_{\Gamma}$ to the boundary is a 
$\sharp_{g}S^1\times S^{n-1}$-fibration over $S^{n-1}$. Let 
$\Gamma_1,\Gamma_2,\ldots,\Gamma_p$ be a set of graphs associated to 
a family of cobounding fibrations, 
namely such that there exists a fibration over 
$D^n\setminus \sqcup_{i=1}^{p-1}D^n$ extending the boundary 
fibrations restrictions of  $f_{\Gamma_i}$, $1\leq i\leq p$. 
We remark that $H_1(\Gamma_i)$ should be isomorphic.
Then we can glue together $\psi_{\Gamma_j}$  to obtain some 
manifold $M(\Gamma_1,\ldots, \Gamma_p)$ endowed 
with a smooth map with finitely many critical points onto $S^n$.

When $n=3$, all 6-manifolds $M^6$ admitting a smooth 
map $M^6\to S^3$ with finitely many 
cone-like singularities arise by this  construction.

\subsection{Fibered links in dimensions $(2n+1,n)$, where $n\geq 2$}
We can construct a much larger family of examples from existing ones, by means of 
a method used by Looijenga in \cite{Loo} to construct non-trivial local isolated singularities. 
Specifically, we consider the {\em spinning} of Hopf 
links,  in a similar manner as the spinning of a knot. 
Consider  a  link $L=\sqcup_{j=0}^d S_j^{n-1}\subset S^{2n-1}$ with a choice of one component $S_i^{n-1}$ to be spin off. 
We isotope  $L$ so that  all components but 
$S_i^{n-1}$ lie in the interior of the upper half space $H_+^{2n-1}:=\{(x_1,\ldots,x_{2n-1})\in\R^{2n-1}; \ x_{2n-1}\geq 0\}$,  while the intersection of $S_i^{n-1}$ with the lower half space consists of a  hemisphere. 
We now spin $H_+^{2n-1}$ in $\mathbb{R}^{2n}$ around $\mathbb{R}^{2n-2}$ so that each point $(x_1,\ldots,x_{2n-1})\in H_+^{2n-1}$ sweeps out the circle $(x_1,\ldots,x_{2n-2},x_{2n-1}\cos\theta,x_{2n-1}\sin\theta)$, $\theta\in [0,2\pi]$. 
The spinning orbits of the hemisphere along $\sqcup_{j\neq i}S_j^{n-1}$  form  a link of the form 
$SL=S_i^{n}\sqcup_{j\neq i}(S^1\times S_j^{n-1})\subset S^{2n}$  (for more details on the knot counterpart see \cite{Fr}). 
When $L$ is a fibered link, the spinning links $SL$ are all fibered. 
In particular this is the case when $L=K_A$. If $F^n=S^n\setminus \sqcup_{j=0}^dD_j^n$ is the fiber of $L$ then 
$SF^{n+1}= S^{n+1}\setminus (D_i^{n+1}\sqcup_{j\neq i}^dS^1\times D_j^n)$ is the fiber of $SL$.

Note that we can iterate this procedure $k$ times and  by choosing each time the same 
spinning component  we obtain links of the form  $S^{n+k-1}\sqcup_1^d(S^1)^k\times S^{n-1}\subset S^{2n+k-1}$.

\subsection{Fibered links in dimensions $(2n,k)$ and $(2n+1,k)$, where $n\geq k\geq 2$}\label{subhalf}
The {\em rank} of a critical point is the rank of the differential at that point. 
Given a smooth map $\psi:(D^m,0)\to (D^k,0)$, $k\geq 2$  with an isolated singularity at $0$ of rank zero 
we consider the map $\Pi \psi:(D^m,0)\to (D^{k-1},0)$ 
obtained by composing $\psi$ with the projection $\Pi:D^k\to D^{k-1}$. This is again a smooth map with an isolated 
singularity at the origin of rank zero. 

According to \cite{J,AD} the local Milnor fiber $F_{\Pi \psi}$ of $\Pi \psi$ around $0$ is homeomorphic to $F_{\psi}\times [0,1]$, if $\psi$ 
is a real polynomial.   

Starting from a smooth map  $\psi_L:(D^{2n},0)\to (D^n,0)$ as constructed in section \ref{half} out of a generalized Hopf link $L$ in generic position
we deduce by iterated projections  smooth maps with an isolated  singularity at the origin 
$\Pi^k\psi:(D^{2n},0)\to (D^{n-k},0)$ in all dimensions $(2n,n-k)$, with $0\leq k\leq n-1$. Links $K^{n+k-1}\subset S^{2n-1}$ 
obtained from these maps will be called {\em generalized Hopf links} in dimensions $(2n,n-k)$.

Assume $\psi_L$ is the local model associated to a fibered generalized Hopf link $L$ with $(d+1)$ components in generic position. 
Then the local fiber $F_{\psi_L}$ is diffeomorphic to a  $n$-disk with $d$ handles of  index $(n-1)$  attached along trivially embedded and 
unlinked spheres  $S^{n-2}\subset \partial D^n$.

The link $L_{\Pi \psi_L}\subset S^{2n-1}$ associated to $\Pi\psi_L$   
is the union of local fibers $f_L^{-1}(\overline{\Pi}^{-1}(0))$, where $\overline{\Pi}:S^{n-1}\to D^{n-1}$ is the projection.  Now $\overline{\Pi}^{-1}(0)=\{n,s\}$ 
is a pair of points, the north and the south pole of $S^{n-1}$ with respect to the projection $\overline{\Pi}$. Therefore 
$L_{\Pi \psi_L}$ is the closure of the union of the two local fibers  
$f_L^{-1}(n)$ and $f_L^{-1}(s)$ of $f_L$, i.e. their union with $L$. 

The link $L_{\Pi \psi_L}\subset S^{2n-1}$ associated to $\Pi\psi_L$ is 
\[
\begin{array}{lll}
S^{2n-1}\cap(\Pi\psi_L)^{-1}(0) & =S^{2n-1}\cap\psi_L^{-1}\big(\Pi^{-1}(0)\big)=S^{2n-1}\cap\psi_L^{-1}\big([sn]\big)\\
& = \left[(S^{2n-1}\setminus N(L))\cap f_L^{-1}\big([sn]\big)\right]\cup\left[N(L)\cap\psi_L^{-1}\big([s0)\cup\{0\}\cup(0n]\big)\right]\\
& = \left[(S^{2n-1}\setminus N(L))\cap f_L^{-1}\big(\{s,n\}\big)\right]\cup L\cup\left[N(L)\cap\psi_L^{-1}\big([s0)\cup(0n]\big)\right],
\end{array}
\]
as $\psi_L\big|_{S^{2n-1}\setminus N(L)}=f_L\big|_{S^{2n-1}\setminus N(L)}$. 
Note that $\psi_L\big|_{N(L)}\neq f_L\big|_{N(L)}$ as 
$\psi_L(L)=0$ while $f_L(L)\subseteq S^{n-1}$.  Since $N(L)\cap\psi_L^{-1}\big([s0)\big)$ is homeomorphic with 
$N(L)\cap f_L^{-1}(s)$ and $N(L)\cap\psi_L^{-1}\big((0n]\big)$ is homeomorphic with $N(L)\cap f_L^{-1}(n)$ we obtain that the link 
$L_{\Pi \psi_L}\subset S^{2n-1}$ associated to $\Pi\psi_L$  is homeomorphic with the closure of the union of the two local fibers  
$f_L^{-1}(n)$ and $f_L^{-1}(s)$ of $f_L$, i.e. their union with $L$. 

Furthermore the open book fibration $f_{L_{\Pi \psi}}: S^{2n-1}\setminus L_{\Pi \psi}\to S^{n-2}$ 
is obtained as $f_{L_{\Pi \psi}}(x)= R \overline{\Pi} f(x)$, 
where $R: D^{n+1}\setminus \{0\}\to S^{n-2}$ is the radial projection. 
If $x\in S^{n-2}$ let $\gamma_x\subset S^{n-1}$ be the great arc 
passing through $n, s$ and $x=\overline{\Pi}^{-1}(x)\in S^{n-1}$. 
Then the local fiber $F_{\Pi \psi_L}$ of $\Pi \psi_L$ is 
the union of fibers $f_L^{-1}(\gamma_x)$. It follows that 
$F_{\Pi \psi_L}$ is homeomorphic to $F_{\psi_L}\times [0,1]$.

By induction the local fiber of $\Pi^kf$ is a $(n+k)$-disk 
with  $d$ handles of index  $(n-1)$ 
attached along trivially embedded and unlinked $S^{n-2}\subset \partial D^{n+k}$. 
It follows that the local fiber $F_{\Pi^kf}={\sharp_{\partial}}_{d}  S^{n-1}\times D^{k+1}$, where 
${\sharp_{\partial}}$ denotes the boundary connected sum of manifolds with boundary. 
In particular the corresponding link $L_{\Pi^kf}\subset S^{2n-1}$ is diffeomorphic to a connected sum $\sharp_{j=1}^{d} S^{n-1}\times S^{k}$.  Note that  the link $L_{\Pi^kf}$ is connected when $k\geq 1$.

It follows that  for $k\geq 1$ any decorated graph $\Gamma$ which can occur in the construction above 
consists of two black vertices and an edge joining them or else a single white vertex connected to several black vertices. 
Note that the gluing map in the former case is highly not unique, the result depending on the 
corresponding element of mapping class group of  $\sharp_{j=1}^{d} S^k\times S^{n-1}$.

\subsection{Statement of results}
Our first result shows that all these examples are nontrivial:

\begin{theorem}\label{generalodd}
Let $\Gamma_1, \Gamma_2,\ldots, \Gamma_p$ be bicolored graphs decorated by generalized Hopf links in dimensions $(2n,n-k)$ 
as in section \ref{subhalf} such that the fibrations $f_{\Gamma_1}, f_{\Gamma_2}\ldots, f_{\Gamma_p}$ cobound.  
When $n-k$ is even we assume that the total number $s$ of black vertices of the graphs 
$\Gamma_1, \Gamma_2,\ldots, \Gamma_p$ is odd. We have then the inequalities: 
 \begin{equation}
1\leq \varphi(M^{2n}(\Gamma_1, \Gamma_2,\ldots, \Gamma_p), S^{n-k})\leq s. 
\end{equation}
\end{theorem}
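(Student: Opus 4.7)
The upper bound $\varphi(M,S^{n-k})\leq s$ is immediate from the construction: assembling the local rank-zero models $\Pi^{k}\psi_{L_v}$ at each black vertex $v$, the trivial projections $F(w)\times D^{n-k}\to D^{n-k}$ at each white vertex $w$, and the cobounding fibration over $S^{n-k}\setminus\bigsqcup_{i=1}^{p}D^{n-k}$ (which exists by hypothesis) produces a smooth map $\psi:M\to S^{n-k}$ whose critical set consists of exactly one isolated rank-zero critical point per black vertex, totalling $s$.

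\medskip

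For the lower bound $\varphi\geq 1$ the strategy is to exhibit a topological obstruction to the existence of a smooth submersion $M\to S^{n-k}$. Since $M$ is closed, any such submersion is by Ehresmann's theorem a locally trivial fibration with some closed fibre $F^{n+k}$, forcing $\chi(M)=\chi(F)\chi(S^{n-k})$. I would compute $\chi(M)$ directly from the constructed map $\psi$. After a generic perturbation its $s$ critical values $c_1,\ldots,c_s\in S^{n-k}$ become pairwise distinct; then a Mayer--Vietoris calculation applied to the cover by the local model disks $\psi^{-1}(\overline{B_i})\cong D^{2n}$ (one per black vertex, each of Euler characteristic $1$) and the complement $\psi^{-1}(U_0)$ (a locally trivial bundle over $U_0:=S^{n-k}\setminus\bigsqcup B_i$ with generic fibre $F_{\mathrm{gen}}$ described in \S\ref{half},\S\ref{subhalf}) yields the closed formula
\[
\chi(M)\;=\;\chi(F_{\mathrm{gen}})\bigl(\chi(S^{n-k})-s\bigr)+s.
\]

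\medskip

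With this formula the proof splits on the parity of $n-k$. If $n-k$ is odd, then $\chi(S^{n-k})=0$ and $F_{\mathrm{gen}}$ is closed of odd dimension $n+k$, so $\chi(F_{\mathrm{gen}})=0$; the formula gives $\chi(M)=s\geq 1$, contradicting $\chi(F)\chi(S^{n-k})=0$. If $n-k$ is even, then $\chi(S^{n-k})=2$ and any fibration would force $\chi(M)=2\chi(F)\in 2\mathbb{Z}$. Reducing the formula modulo $2$ gives $\chi(M)\equiv s\bigl(1+\chi(F_{\mathrm{gen}})\bigr)\pmod{2}$; a check that $\chi(F_{\mathrm{gen}})$ is even---using that $F_{\mathrm{gen}}$ is built by gluing Milnor fibres $\sharp_{\partial,d}S^{n-1}\times D^{k+1}$ and white-vertex labels $F(w)$ along common boundaries $\sharp_{d}S^{n-1}\times S^{k}$ of odd dimension $n+k-1$, hence of vanishing Euler characteristic---reduces this to $\chi(M)\equiv s\pmod{2}$, which is odd by hypothesis and thus incompatible with membership in $2\mathbb{Z}$.

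\medskip

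The main obstacle I anticipate is the verification that $\chi(F_{\mathrm{gen}})$ is even in the $n-k$ even case: it requires a careful accounting of the Euler characteristic contributions ($1-d_v$ or $d_v+1$ according to the parity of $n$) of each local Milnor fibre, combined with the constraints imposed by the cobounding condition on the combinatorial structure of the $\Gamma_i$---simplified for $k\geq 1$ by the observation of \S\ref{subhalf} that every $\Gamma_i$ is either an edge between two black vertices or a star centred at a single white vertex.
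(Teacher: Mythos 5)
Your strategy for the lower bound (an Euler--characteristic obstruction to fibering, computed by cutting $M$ along the preimages of small balls around the critical values) is the same as the paper's, and your upper bound is fine. But there is a genuine error in the Mayer--Vietoris step: $\psi^{-1}(\overline{B_i})$ is \emph{not} the local model disk $D^{2n}$ and does not have Euler characteristic $1$. In the construction every piece --- each local model $D^{2n}_v\to D^{n-k}$ and each product $F(w)\times D^{n-k}$ --- surjects onto the target disk, so the fibre over the critical value $c_i$, and hence the preimage of any ball around it, meets every piece of the decomposition. What $\psi^{-1}(\overline{B_i})$ deformation retracts onto is the \emph{full} singular fibre $V_i$, i.e.\ the generic fibre with the local fibre $F_{\mathrm{loc},i}$ excised and replaced by the cone on its boundary link; hence $\chi(\psi^{-1}(\overline{B_i}))=\chi(F_{\mathrm{gen}})-\chi(F_{\mathrm{loc},i})+1$. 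Running your Mayer--Vietoris with this corrected term gives
\[
\chi(M)\;=\;\chi(F_{\mathrm{gen}})\,\chi(S^{n-k})+\sum_{v\ \mathrm{black}}\bigl(1-\chi(F_{\mathrm{loc},v})\bigr)\;=\;\chi(F_{\mathrm{gen}})\,\chi(S^{n-k})+(-1)^{n}\sum_{v}d_v,
\]
not $\chi(F_{\mathrm{gen}})(\chi(S^{n-k})-s)+s$. A concrete check: for $(2n,n-k)=(6,3)$ and the tree with one black vertex decorated by a $(d+1)$-component Hopf link and all boundary spheres capped by disks, one has $F_{\mathrm{gen}}=S^3$ and $\chi(M)=-d$, whereas your formula predicts $1$.

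The theorem survives the correction, essentially because the corrected defect term behaves like yours qualitatively: each $d_v\geq 1$, so $(-1)^n\sum_v d_v\neq 0$ and the odd case goes through; and for $k=0$ one has $\sum_v d_v=\sum_v(\mathrm{val}(v)-1)=2m-s\equiv s\pmod 2$, so the even case also goes through under your parity hypothesis --- this is exactly the paper's bookkeeping with $t_j=2m_j-s_j$ (for $k\geq 1$ the congruence $\sum_v d_v\equiv s$ needs a separate argument, since $d_v$ is then the number of handles of the local fibre, not a valence). Note also that once the formula is corrected, the question you flagged as the main obstacle --- whether $\chi(F_{\mathrm{gen}})$ is even --- becomes irrelevant: the fibered part contributes $\chi(F_{\mathrm{gen}})\chi(S^{n-k})$, which is automatically even when $n-k$ is even. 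So the plan is repairable, but as written the central closed formula, and the reduction mod $2$ built on it, are incorrect.
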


\begin{remark} 
The fibrations with fiber $F$ over $S^{n-1}$, $n\geq 3$,  are classified by their 
characteristic elements in the group $\pi_{n-2}({\rm Diff}(F))$. A collection 
of fibrations cobound if the sum of their characteristic elements is trivial. 
This provide abundant examples verifying the assumptions of the theorem for odd $n-k$. 
Notice that for even $n-k$,  it is not clear that  there exists a collection 
$\Gamma_1, \Gamma_2,\ldots, \Gamma_p$ of bicolored decorated graphs
with odd total number of vertices in order to be able to use theorem \ref{generalodd} to finding non-trivial examples. 
\end{remark}

Let now  $\varphi_c$ count the minimum number of 
critical points of smooth maps with only {\em cone-like} singularities (see \cite{KN}). 

\begin{theorem}\label{63}
If $\varphi_c(M^6,S^3)$ is finite nonzero then $M$ is diffeomorphic 
to $M^6(\Gamma_1,\ldots, \Gamma_p)$, for some decorated bicolored graphs $\Gamma_i$.   
In particular $\pi_1(M)$ is a (closed) 3-manifold group. 

Moreover, if  $\pi_1(M^6)=1$ and  $\chi(M)\geq -1$ then  either $\varphi_c(M^6,S^3)=0$, or 
$\varphi_c(M^6, S^3)=\infty$. 
\end{theorem}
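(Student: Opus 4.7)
The plan is to prove the two assertions in order. Take a smooth map $f\colon M^6\to S^3$ with finite nonzero cone-like singularities. Near a critical point $p$, cone-likeness identifies $f$ with the cone on an open book fibration $f_{L_p}\colon S^5\setminus L_p\to S^2$ whose binding is a fibered link $L_p\subset S^5$ with fiber $F_p$. The key reduction is $\pi_1(F_p)=1$: the link $L_p$ has codimension $3$ in $S^5$, so $\pi_1(S^5\setminus L_p)=1$ by general position, and the open book presents the complement as the mapping torus of the monodromy on $F_p$, which forces $\pi_1(F_p)=1$. Each component of $L_p=\partial F_p$ is then a $2$-sphere, and $F_p$ is a holed $3$-disk by the $3$-dimensional Poincar\'e theorem. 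By the characterisation of such fibered links (see \S\ref{half} and \cite{Fu, AHSS}), $L_p$ is isotopic to a generalised Hopf link $K_{A_p}$. Reassembling these local black-vertex models with the fibration of the regular locus over $S^3$ minus the critical values presents $M$ as some $M^6(\Gamma_1,\ldots,\Gamma_p)$ in the sense of \S\ref{cutpaste}.

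For $\pi_1(M)$, set $B:=S^3\setminus\sqcup_{i=1}^p D_i^3$ around the critical values and $M_B:=f^{-1}(B)$, so that $f|_{M_B}\to B$ is a locally trivial fibration with closed $3$-dimensional fiber $F'$. Since $\pi_1(B)=1$, the homotopy exact sequence yields $\pi_1(M_B)=\pi_1(F')/N$ with $N$ normally generated by the image of $\pi_2(B)$; re-attaching the $X(\Gamma_i)$ along their closed $5$-manifold boundaries can only introduce further relations, so $\pi_1(M)$ is a finitely normally presented quotient of $\pi_1(F')$. Surgering $F'$ along curves representing these relations produces a closed $3$-manifold whose fundamental group is $\pi_1(M)$.

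For the second assertion I argue by contradiction: assume $0<s:=\varphi_c(M^6,S^3)<\infty$, so $M\cong M^6(\Gamma_1,\ldots,\Gamma_p)$ with $s$ black vertices in total. Two ingredients come together. First, a Mayer--Vietoris decomposition (each black vertex contributing a $D^6$, each white vertex an $F(w)\times D^3$, each edge a tube $S^2\times D^3$), combined with $\chi(F')=0$ for the closed $3$-dimensional generic fiber, gives a linear formula $\chi(M)=s+\sum_w\chi(F(w))-2e$, where $e$ is the total number of edges. Second, the hypothesis $\pi_1(M)=1$ forces the underlying graphs to be forests and constrains the decorations $F(w)$, because the fiber $F'$ is built from $\sharp S^1\times S^2$ summands indexed by the first Betti number of the configuration, which must be killed by the available $\pi_2(B)$-relations. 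Combining these restrictions should yield a bound contradicting $\chi(M)\geq -1$ as soon as $s$ exceeds a small threshold, with the residual small cases eliminated by direct enumeration of the admissible simply connected configurations.

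The main obstacle, in my view, lies in the second ingredient of the last step: extracting from $\pi_1(M)=1$ combinatorial conditions on the decorated graphs sharp enough to rule out the borderline small-$s$ configurations. The surgery presentation of $\pi_1(M)$ as $\pi_1(F'')$ is implicit rather than constructive, and translating it into concrete restrictions on which $\Gamma_i$ and which $F(w)$ can occur is where the real work lies. Once this combinatorial reduction is in place, the Euler characteristic bookkeeping is routine.
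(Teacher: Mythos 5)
Your outline for the first assertion follows the paper, but two of your key steps do not hold up, and the step you yourself flag as the ``main obstacle'' is exactly where the paper's real argument lives.

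First, two local problems. (a) Your justification that the local fiber $F_p$ is simply connected is wrong as stated: an open book fibration of $S^5\setminus L_p$ is a fibration over $S^2$, not over $S^1$, so the complement is not a mapping torus. The correct route is the homotopy exact sequence of the fibration over $S^2$ together with the fact that the boundary map $\pi_2(S^2)\to\pi_1(F_p)$ vanishes (the radial structure of $N(L_p)\cong L_p\times D^3$ provides a section of the fibration over $S^2$); the paper simply cites this from \cite{AHSS}. (b) Your argument that $\pi_1(M)$ is a closed $3$-manifold group is broken at the last step: surgery on a curve $\gamma$ in a closed $3$-manifold $F'$ yields a manifold with fundamental group $\pi_1(F'\setminus\gamma)$ modulo the filling slope, which surjects onto $\pi_1(F')/\langle\langle\gamma\rangle\rangle$ but is not equal to it in general, and quotients of $3$-manifold groups need not be $3$-manifold groups. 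The paper avoids quotients entirely: since the union $V$ of singular fibers has codimension $3$, $\pi_1(M\setminus V)\to\pi_1(M)$ is an isomorphism, and a Van Kampen decomposition of $M\setminus V$ into the simply connected local pieces $D^6_v\setminus V$, the products $F_{ij}\times(D^3\setminus\{0\})$, and the simply connected gluing regions $N(L)\setminus L$ shows that the inclusion of the closed generic fiber $F$ induces an isomorphism $\pi_1(F)\cong\pi_1(M)$. That is the statement ``$\pi_1(M)$ is a closed $3$-manifold group,'' with no normal closure to kill.

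Second, the gap you acknowledge in the final step is the substance of the theorem, and the two missing ingredients are concrete. (i) From $\pi_1(M)\cong\pi_1(F)\cong *_j\pi_1(F_{ij})*\mathbb{F}_r$ and $\pi_1(M)=1$ you get $r=0$ (trees), every white decoration $F_{ij}$ simply connected with a single boundary component (hence a disk, by Perelman), and each $\Gamma_i$ containing exactly one black vertex. (ii) For $(m,n)=(6,3)$ the linking matrix $A$ of a fibered generalized Hopf link is skew-symmetric and unimodular, hence of even size $d_i\geq 2$, so every black vertex has odd degree $d_i+1\geq 3$. The Euler characteristic computation from the proof of Theorem \ref{generalodd} then gives $\chi(M)=-\sum_i d_i\leq -2$ whenever there is at least one critical point. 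There is no threshold on $s$ and no residual enumeration of small cases: a single black vertex already violates $\chi(M)\geq -1$. Without the parity constraint from unimodularity and without the exact identification $\pi_1(M)\cong\pi_1(F)$, your Euler-characteristic bookkeeping cannot close.
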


Since $S^6$ does not fiber over $S^3$ (see e.g. \cite{AF1}) we derive:
\begin{corollary}
We have $\varphi_c(S^6,S^3)=\infty$. 
\end{corollary}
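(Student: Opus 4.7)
The plan is to apply Theorem \ref{63} directly to $M = S^6$ and then rule out the value $0$ using the cited non-fibration fact. First I would observe that $S^6$ satisfies the hypotheses of the second part of Theorem \ref{63}: it is simply connected, so $\pi_1(S^6) = 1$, and $\chi(S^6) = 2 \geq -1$. Consequently, the dichotomy in that theorem forces
\[
\varphi_c(S^6, S^3) \in \{0, \infty\}.
\]

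Next I would eliminate the possibility $\varphi_c(S^6, S^3) = 0$. By definition, this would mean that there exists a smooth map $f : S^6 \to S^3$ without critical points, i.e.\ a submersion. Since $S^6$ is compact and $S^3$ is connected, any surjective submersion between closed manifolds is a locally trivial fibration (Ehresmann's theorem). Thus $S^6$ would fiber smoothly over $S^3$, contradicting the cited result from \cite{AF1} that $S^6$ admits no such fibration.

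Combining the two steps leaves only $\varphi_c(S^6, S^3) = \infty$, which is the desired conclusion. The substantive content here is entirely borrowed from Theorem \ref{63} and the non-fibration statement, so there is no real obstacle; the only point worth double-checking is that a critical-point-free smooth map $S^6 \to S^3$ is automatically surjective (which follows because its image is both open, by the submersion property, and closed, by compactness of $S^6$, hence equal to the connected target $S^3$), so that Ehresmann's theorem genuinely produces a fibration.
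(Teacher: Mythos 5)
Your proof is correct and follows essentially the same route as the paper: the corollary is obtained there by applying the dichotomy of Theorem \ref{63} to $S^6$ (which is simply connected with $\chi = 2 \geq -1$) and then excluding the value $0$ via the fact from \cite{AF1} that $S^6$ does not fiber over $S^3$. Your extra care in invoking Ehresmann's theorem and checking surjectivity to identify "zero critical points" with "fibers over $S^3$" is a correct filling-in of a step the paper leaves implicit.
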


We think that it is possible to classify all manifolds $M^6$ with  finite $\varphi_c(M^6,S^3)$. 

We further show that this method could indeed produce explicit 
examples with $\varphi$ equal to one, in all dimensions. We state our result below separately for odd and even dimensions, 
as the combinatorial data is slightly different.  

\begin{theorem}\label{generaleven}
Suppose that  $n\geq 3$  and  the decorated graph is as follows:    
\begin{enumerate}
\item for $k=0$ a tree $\Gamma_0$ with one black vertex  decorated by a generalized Hopf link and several white vertices decorated by disks.
\item for $k\geq 1$, the graph $\Gamma_0$ has a single black vertex $v$ 
decorated by a generalized Hopf link   $L_{\Pi^kL}$, where $L$ is a  $(n-1)$-dimensional 
generalized Hopf link with $d+1\geq 5$ components and a white vertex, the two vertices being connected by an edge. 
The white vertex $w$ is decorated by  $F_w={\sharp_{\partial}}_{d}D^{n}\times S^{k}$. 
\end{enumerate} 
Then  
\[\varphi(M^{2n}(\Gamma_0), S^{n-k}) =1.\]
\end{theorem}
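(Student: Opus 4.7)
The plan is to deduce Theorem \ref{generaleven} from Theorem \ref{generalodd} applied to the single graph $\Gamma_0$. Since $\Gamma_0$ has exactly one black vertex, the total count is $s=1$, which is odd, so the parity hypothesis of Theorem \ref{generalodd} is automatically satisfied for any $n-k$; the conclusion $1\le \varphi(M^{2n}(\Gamma_0),S^{n-k})\le s = 1$ then immediately collapses to $\varphi = 1$. The content of the proof is therefore the verification of the cobounding hypothesis of Theorem \ref{generalodd} for $\Gamma_0$, namely that the boundary fibration
\[
\psi_{\Gamma_0}\big|_{\partial X(\Gamma_0)}\colon \partial X(\Gamma_0) \longrightarrow S^{n-k-1}
\]
extends to a smooth (singularity-free) fibration over a complementary disk $D^{n-k}\subset S^{n-k}$.

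In case $(1)$, $k=0$, the tree $\Gamma_0$ has $d+1$ univalent white vertices, each decorated by $D^n$ and glued to one of the $d+1$ components of the generalized Hopf link $L_v$ at the black vertex via the preferred longitudinal/meridian trivializations from section \ref{half}. Each boundary component of the open-book holed-disk fiber is then capped off by the $D^n$ contributed by a white vertex, producing a generic fiber diffeomorphic to $S^n$. Because all gluings use the standard product framings inherent in the data of a generalized Hopf link, I would argue that the resulting $S^n$-bundle $\partial X(\Gamma_0)\to S^{n-1}$ has trivial monodromy and therefore extends as the product $S^n\times D^n$ over the complementary disk.

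In case $(2)$, $k\ge 1$, the single-edge graph glues the open-book fiber ${\sharp_\partial}_d\, S^{n-1}\times D^{k+1}$ of the black vertex (identified in section \ref{subhalf}) to the white-vertex piece $F_w={\sharp_\partial}_d\, D^n\times S^k$ along their common boundary $\sharp_d\, S^{n-1}\times S^k$, using the compatible product trivializations of each factor. As before, I would show that the resulting boundary fibration over $S^{n-k-1}$ has trivial monodromy and therefore extends smoothly over the complementary $D^{n-k}$. The assumption $d+1\ge 5$ presumably enters here to guarantee that $L$ is genuinely fibered in the relevant range of dimensions, so that the whole apparatus of section \ref{subhalf} applies.

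The main obstacle is the triviality of the monodromy of these boundary bundles: a priori the classifying map lives in the possibly non-vanishing group $\pi_{n-k-1}(\mathrm{Diff}(S^{n+k}))$, so one must exploit the compatibility of the preferred product trivializations across every gluing interface to check that the clutching function is null-homotopic (in the spirit of the familiar trivialization argument for boundaries of Lefschetz fibrations). Once this cobounding check is carried out in both cases, Theorem \ref{generalodd} applies with $s=1$ and immediately yields $\varphi(M^{2n}(\Gamma_0),S^{n-k})=1$.
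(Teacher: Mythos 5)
Your reduction to Theorem \ref{generalodd} breaks down precisely in the case that makes Theorem \ref{generaleven} nontrivial, namely when $n-k$ is even. The lower bound in Theorem \ref{generalodd} is obtained from an Euler characteristic parity count, and the congruence $\chi(M)\equiv s \pmod 2$ used there is derived only for graphs \emph{without} white vertices, where the number of attached $n$-cells is $t=2m-s\equiv s\pmod 2$. Once white vertices are present, the correct count is $t=\sum_{v\ \mathrm{black}}(d(v)-1)$, which for $\Gamma_0$ equals $d$, not $s=1$. Concretely, for $k=0$ and $n$ even one computes $\chi(M(\Gamma_0))=4+d$ (consistent with the Mayer--Vietoris computation $H_n(M(\Gamma_0))=\Z^{d+2}$ and $(n-1)$-connectedness), so whenever $d$ is even -- which the hypotheses allow, e.g.\ $A=H$ with $d=2$ -- the mod $2$ Euler characteristic obstruction vanishes and no Euler-characteristic argument can rule out a fibration over $S^{n-k}$. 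The same phenomenon occurs for $k\geq 1$ with $d\geq 4$ even. So ``$s=1$ is odd, hence the parity hypothesis holds'' does not deliver the lower bound $\varphi\geq 1$ in the even case; the statement of Theorem \ref{generalodd} cannot be applied as a black box here because its proof does not cover decorated graphs with this parity bookkeeping.

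The paper's actual argument for the lower bound is of a different nature: it computes the integral homology of the closed manifold $M(\Gamma_0)=X(\Gamma_0)\cup S^{n+k}\times D^{n-k}$ (it is $(n-1)$-connected with $H_n\cong\Z^{d+2}$ for $k=0$, and has prescribed Betti numbers in degrees $n-k,n,n+k$ for $k\geq 1$), assumes a fibration over $S^{n-k}$ with fiber $F$, and then runs the Wang exact sequence degree by degree to pin down the Betti numbers of $F$, reaching a contradiction with Poincar\'e duality (or connectivity) of $F$; this works for every parity of $n-k$ and $d$. You would need to supply an argument of this finer homological type to close the gap. Separately, your verification of the cobounding hypothesis (equivalently, the upper bound $\varphi\leq 1$) is only a sketch: the triviality of the boundary fibrations is exactly the content of Lemmas \ref{trivialfibration} and \ref{trivialfibration2}, which the paper proves by an explicit surgery description of $\partial X(\Gamma_0)$ and, for $k\geq 1$, by induction on $k$; asserting that ``the clutching function is null-homotopic'' because the framings are compatible is not yet a proof.
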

\begin{theorem}\label{odd} 
Suppose that  $n\geq 3$  and the decorated graph is as follows: 
\begin{enumerate}
\item for $k=0$ the graph $\Gamma_0$ is a tree consisting of one black vertex 
decorated by the fibered link $SK_A$ which is adjacent to $d+1\geq 2$ white vertices, one of which 
being decorated by the disk $D^{n+1}$ 
and the remaining white vertices being decorated by $S^1\times D^n$. 
\item for $k\geq 1$ the graph $\Gamma_0$ has a single black vertex $v$ decorated by $L_{\Pi^kSL}$, where $L$ is 
an  $(n-1)$-dimensional generalized Hopf link with $d+1\geq 5$ components
and a white vertex, the two vertices being connected by an edge. 
The white vertex $w$ is decorated by  the manifold 
$F_w=\left({\sharp_{\partial}} _{j=1}^{d}D^{n}\times S^{k+1}\right)\sharp_{\partial}\left({\sharp_{\partial}} _{j=1}^{d}S^{k}\times D^{n+1}\right)$. 
\end{enumerate}  
Then 
\[\varphi(M^{2n+1}(\Gamma_0), S^{n}) =1\]
\end{theorem}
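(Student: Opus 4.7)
The upper bound $\varphi(M^{2n+1}(\Gamma_0), S^n) \leq 1$ is immediate from the cut-and-paste construction of Section \ref{cutpaste}. Since $\Gamma_0$ has a single black vertex, the assembled map $f_{\Gamma_0} : X(\Gamma_0) \to D^n$ has exactly one critical point, namely the apex of the local model attached to that vertex; the unique white vertex, with its prescribed decoration, is glued along the boundary components of the link neighborhood so that the resulting boundary fibration over $S^{n-1}$ is smooth. The cobounding condition for $p=1$ then prolongs the boundary fibration to a fibration over $S^n\setminus D^n$ without introducing new critical points, assembling a smooth map $M(\Gamma_0)\to S^n$ with exactly one critical point.

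For the lower bound, I would show that $M(\Gamma_0)$ does not smoothly fiber over $S^n$. Assuming otherwise, Ehresmann's theorem promotes the submersion to a locally trivial bundle $F \to M(\Gamma_0) \to S^n$ with closed $(n+1)$-dimensional fiber $F$. The Wang exact sequence for such a fibration,
\begin{equation*}
\cdots \to H^i(M(\Gamma_0);\Q) \to H^i(F;\Q) \to H^{i-n+1}(F;\Q) \to H^{i+1}(M(\Gamma_0);\Q) \to \cdots ,
\end{equation*}
together with the multiplicative structure of the Serre spectral sequence, imposes strong constraints on the rational cohomology of $M(\Gamma_0)$. The plan is to compute $H^*(M(\Gamma_0);\Q)$ directly from the graph decomposition via Mayer--Vietoris, applied to the covering by a neighborhood of the local model and a neighborhood of the white vertex, and to exhibit a feature --- typically a non-vanishing cup product --- that violates the Wang constraints.

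For $k=0$, the spinning description of $SK_A$, after closure with the disk and the $S^1\times D^n$ pieces, produces cup products in degrees near $n$ and $n+1$ that reflect the linking matrix $A$, in the spirit of the obstruction underlying Theorem \ref{generalodd}. For $k\geq 1$, the link $L_{\Pi^k SL}$ is a connected sum of copies of $S^{n+k-1}\times S^k$, and the decoration $F_w$ is tailored so that its $D^n\times S^{k+1}$ and $S^k\times D^{n+1}$ summands pair up to produce analogous cohomological witnesses. The main obstacle will be carrying out the Mayer--Vietoris computation in sufficient generality to pin down the obstructing class in both cases; this parallels the analogous step in the proof of Theorem \ref{generalodd}, but the Euler-characteristic-parity shortcut available there (in even total dimension) is unavailable here, so one must work directly at the level of the cohomology ring and, possibly, of the tangent bundle characteristic classes of $M(\Gamma_0)$.
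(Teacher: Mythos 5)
Your overall strategy is the right one --- the upper bound comes from the construction having a single black vertex, and the lower bound must come from showing $M(\Gamma_0)$ does not fiber, since the Euler-characteristic argument of Theorem \ref{generalodd} is vacuous in odd total dimension --- but as written the proposal is a plan with two genuine gaps rather than a proof. First, even the upper bound is not closed: you invoke ``the cobounding condition for $p=1$'' but never verify it for these specific decorations. The paper devotes Lemmas \ref{trivialfibration3} and \ref{trivialfibration4} to showing, via an explicit surgery description of $\partial X(\Gamma_0)$ (spun longitudes and meridians for $SK_A$, and an induction on $k$ for $L_{\Pi^k SL}$), that the boundary fibration is the \emph{trivial} $S^{n+1}$-bundle over $S^n$ (resp.\ $S^{n+k+1}$-bundle over $S^{n-k-1}$), so that $M(\Gamma_0)=X(\Gamma_0)\cup S^{n+1}\times D^n$ closes up. This identification is not optional: it is also the input to the Mayer--Vietoris computation of $H_*(M(\Gamma_0))$ on which everything else rests.

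Second, the obstruction you gesture at --- ``typically a non-vanishing cup product,'' possibly characteristic classes --- is not the one that works, and you explicitly defer the computation that would pin it down. The actual obstruction is purely additive. For $k=0$ one finds $M(\Gamma_0)$ is $(n-1)$-connected with $H_n(M(\Gamma_0))\cong H_{n+1}(M(\Gamma_0))\cong\Z^{d+1}$; if $M(\Gamma_0)$ fibered over $S^n$ the fiber $F^{n+1}$ would be forced by the homotopy exact sequence to be highly connected, and the Wang sequence
\[
0=H_1(F;\Q)\to H_n(F;\Q)\to H_n(M(\Gamma_0);\Q)\to H_0(F;\Q)\to H_{n-1}(F;\Q)
\]
then caps the rank of $H_n(M(\Gamma_0);\Q)$ by roughly the rank of $H_0(F;\Q)$, contradicting $d+1\geq 2$ (the paper phrases this as $H_0(F)$ having rank at least $d+1$, contradicting connectedness of $F$). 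For $k\geq 1$ the same mechanism is iterated down the Wang sequence in the ranges $q=2n,\dots$, splitting into the cases $n+1\geq 2k+3$, $n=2k+1$, $2k\geq n$, and the contradiction is with Poincar\'e duality of $F^{n+k+1}$: one degree is forced to have rank at least $d-2\geq 2$ while its dual degree has rank at most $1$. No cup products or characteristic classes enter. Until you (a) prove the triviality of the boundary fibration and (b) run this Betti-number bookkeeping, the lower bound is not established.
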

The gluing map between the decoration and the local fiber associated to the black vertex will be specified in the proof. 

The only drawback of this method is that  we don't have 
an explicit description of the manifolds of the form  
$M^{m}(\Gamma_1,\ldots, \Gamma_p)$. Using different tools we can provide a first sample of 
easy to understand examples in arbitrary high dimensions, which might be interesting by themselves, as follows: 

\begin{proposition}\label{products}
We have 
\[ 1\leq \varphi(S^4\times S^4\times \cdots \times S^4,S^3)\leq 2^m,\]
when we have $m$ factors $S^4$. 
Moreover, we have 
\[1\leq \varphi\left((\sharp_{r_1}S^2\times S^2)\times(\sharp_{r_2}S^2\times S^2)
\times \cdots \times (\sharp_{r_m}S^2\times S^2),S^3\right)\leq 2^m(r_1+1)\cdots(r_m+1).\]
\end{proposition}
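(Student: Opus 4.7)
The plan is to (a) construct, for each factor, a smooth map with few critical points, and (b) combine these multiplicatively using the Lie group structure of $S^3$.

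For a single copy of $S^4$, I would construct $f\colon S^4\to S^3$ with exactly two critical points by smoothing the spherical suspension $\Sigma h$ of the Hopf fibration $h\colon S^3\to S^2$: the two suspension points become cone-like singularities with Milnor fiber $S^1$, while away from them the map is a genuine $S^1$-bundle over a latitude $2$-sphere. In parallel, I would build a map $g\colon \sharp_r S^2\times S^2\to S^3$ with $2(r+1)$ isolated critical points, realising the Euler-characteristic lower bound $\chi(\sharp_r S^2\times S^2)=2(r+1)$. A candidate procedure is to start from $f$ and iteratively connect-sum a model map $S^2\times S^2\to S^3$ (with four cone-like singularities) onto the source at a regular value, arranging each step to add precisely one copy of $S^2\times S^2$ and exactly two new cone-like singularities.

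Given factor maps $f_i\colon M_i^4\to S^3$ with $c_i$ critical points each (where $M_i$ is either $S^4$ or $\sharp_{r_i} S^2\times S^2$), I would form
\[ F\colon M_1\times\cdots\times M_m\to S^3,\qquad F(x_1,\ldots,x_m)=f_1(x_1)\cdot f_2(x_2)\cdots f_m(x_m),\]
using the Lie group structure of $S^3$. Since left and right translations in $S^3$ are diffeomorphisms, the image of $dF_{(x_1,\ldots,x_m)}$ equals the sum of the translated images of the $df_i(x_i)$; in particular $dF$ is surjective as soon as some $df_i(x_i)$ is. Hence the critical set of $F$ is contained in $\prod_i\mathrm{Crit}(f_i)$, so $F$ has at most $\prod_i c_i$ critical points. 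Taking $c_i=2$ yields $\varphi\le 2^m$, and $c_i=2(r_i+1)$ yields $\varphi\le 2^m(r_1+1)\cdots(r_m+1)$.

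For the lower bound, any smooth fiber bundle over $S^3$ has Euler characteristic $\chi(S^3)\chi(F)=0$, whereas $\chi((S^4)^m)=2^m$ and $\chi\bigl(\prod_i\sharp_{r_i}S^2\times S^2\bigr)=2^m\prod(r_i+1)$ are both strictly positive. Hence neither product can fiber smoothly over $S^3$, forcing $\varphi\ge 1$ in each case. The main technical obstacle is the construction of $g\colon \sharp_r S^2\times S^2\to S^3$ with exactly $2(r+1)$ critical points, since the Euler-characteristic accounting is tight: one must check that the handle-attaching or connect-sum step introduces precisely two new cone-like singularities each time and creates no extra critical points along the gluing necks. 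Once this is in place, the Lie group product argument is formal and the Euler-characteristic lower bound is automatic.
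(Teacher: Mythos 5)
Your overall architecture is exactly the paper's: the pointwise product $F(x_1,\dots,x_m)=f_1(x_1)\cdots f_m(x_m)$ with the observation that $C(A\odot B)\subseteq C(A)\times C(B)$ (because $dF$ is a sum of left/right-translated images of the $df_i$, and translations are diffeomorphisms) is precisely the paper's Lemma on Lie group products, and the lower bound via multiplicativity of the Euler characteristic for fiber bundles over $S^3$ is also the paper's argument. The difference is only in how the base cases are handled. The paper simply quotes $\varphi(S^4,S^3)=2$ and $\varphi(\sharp_s S^2\times S^2,S^3)=2s+2$ from the literature (Andrica--Funar and Funar--Pintea--Zhang), then feeds them into the product inequality $\varphi(M\times N,G)\leq\varphi(M,G)\varphi(N,G)$. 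Your suspension-of-the-Hopf-map construction for $S^4\to S^3$ is a legitimate direct proof of the first base case. However, your treatment of $\sharp_r S^2\times S^2\to S^3$ with exactly $2(r+1)$ cone-like critical points is only a sketch, and you yourself flag it as the main unproved step: the iterative "connect-sum a model map at a regular value, adding exactly two singularities each time" is not justified (the generic fiber of such maps is a circle, and a fiber connected sum along circles does not obviously produce the connected sum of the sources, nor is it clear why only two new critical points appear per step; the actual constructions in the cited papers go through generalized Hopf links). Since only the upper bound is needed and the statement $\varphi(\sharp_r S^2\times S^2,S^3)\leq 2(r+1)$ is an established citable result, this gap is repairable by citation, but as written your proposal does not establish the second family of upper bounds. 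Everything else (the product lemma, the computation $\chi(\prod_i\sharp_{r_i}S^2\times S^2)=2^m\prod_i(r_i+1)>0$, and the conclusion that the products cannot fiber over $S^3$) is correct and matches the paper.
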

The existence of the Hopf fibration $S^3\to S^2$ implies:  
\begin{corollary} We have 
\[\varphi\left((\sharp_{r_1}S^2\times S^2)\times(\sharp_{r_2}S^2\times S^2)
\times \cdots \times (\sharp_{r_m}S^2\times S^2),S^2\right)\leq 2^m(r_1+1)\cdots(r_m+1).\]
\end{corollary}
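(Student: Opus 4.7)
The plan is to deduce the corollary from Proposition \ref{products} by post-composing with the Hopf fibration, observing that composing a map with a submersion never increases the critical set. The key algebraic inequality we will exploit is $\varphi(M, S^2) \leq \varphi(M, S^3)$ whenever there is a smooth submersion $S^3 \to S^2$, and this is exactly what the Hopf fibration provides.

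More precisely, let $M = (\sharp_{r_1} S^2 \times S^2) \times \cdots \times (\sharp_{r_m} S^2 \times S^2)$. By Proposition \ref{products} there is a smooth map $f \colon M \to S^3$ whose critical set has cardinality at most $2^m(r_1+1) \cdots (r_m+1)$. Let $h \colon S^3 \to S^2$ denote the Hopf fibration, which is a smooth submersion so that $dh_q \colon T_q S^3 \to T_{h(q)} S^2$ is surjective for every $q \in S^3$. I would then consider the composite $h \circ f \colon M \to S^2$ and analyze its critical set.

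The heart of the argument is the pointwise inclusion $\mathrm{Crit}(h \circ f) \subseteq \mathrm{Crit}(f)$. Indeed, if $p \in M$ is a regular point of $f$, then $df_p$ is surjective onto $T_{f(p)} S^3$; composing with the surjection $dh_{f(p)}$ yields a surjection $d(h \circ f)_p \colon T_p M \to T_{h(f(p))} S^2$, so $p$ is also a regular point of $h \circ f$. Consequently the number of critical points of $h \circ f$ is bounded above by that of $f$, giving
\[
\varphi(M, S^2) \leq |\mathrm{Crit}(h \circ f)| \leq |\mathrm{Crit}(f)| \leq 2^m(r_1+1) \cdots (r_m+1),
\]
which is the desired bound.

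I do not expect any genuine obstacle here: both the construction of $f$ (handled by the preceding proposition) and the submersion property of the Hopf fibration are standard, and the verification that regular points pull back under composition with a submersion is a one-line chain-rule argument. The only care needed is to record that the bound coming from Proposition \ref{products} is inherited verbatim, since the corollary's upper bound matches exactly the bound on $\varphi(M, S^3)$ established there.
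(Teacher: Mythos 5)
Your proposal is correct and is exactly the paper's argument: the corollary is stated as an immediate consequence of the existence of the Hopf fibration $S^3\to S^2$, i.e.\ post-composing the map from Proposition \ref{products} with a submersion, which cannot enlarge the critical set. You have simply written out the one-line chain-rule verification that the paper leaves implicit.
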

When $m=1$, $r_1=1$ the left hand side vanishes. 
It seems that otherwise it is positive.

\begin{corollary}\label{evendim}
There exist examples with nontrivial 
$\varphi(M^{2n},S^3)$, for every $n\geq 2$.
\end{corollary}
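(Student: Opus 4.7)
The corollary is essentially immediate from the two preceding results in this section, once one observes that the case $n=2$ requires Proposition \ref{products} while the cases $n\geq 3$ can be covered by Theorem \ref{generalodd}. My plan is thus to treat these two ranges separately, without needing any new geometric input.

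For the base case $n=2$, I would apply Proposition \ref{products} with $m=1$, obtaining directly
\[1\leq \varphi(S^4, S^3)\leq 2.\]
The lower bound reflects an Euler characteristic obstruction: any smooth fibration $S^4 \to S^3$ would force $\chi(S^4)=\chi(F)\cdot \chi(S^3)=0$, contradicting $\chi(S^4)=2$. Hence $M^4=S^4$ qualifies as a nontrivial example in dimension $4$.

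For $n\geq 3$, I would invoke Theorem \ref{generalodd} with the choice $k=n-3\geq 0$, so that the target sphere is $S^{n-k}=S^3$. Since $n-k=3$ is odd, the parity hypothesis on the total number $s$ of black vertices is vacuous, and it suffices to exhibit a single family of bicolored decorated graphs whose boundary fibrations cobound. My choice is the mirror-doubled family $(\Gamma_0, \overline{\Gamma_0})$, where $\Gamma_0$ consists of one black vertex decorated by a generalized Hopf link $L_{\Pi^{n-3}L}$ in dimensions $(2n,3)$ --- constructed by applying the iterated-projection procedure $\Pi^{n-3}$ of section \ref{subhalf} to any generalized Hopf link $L$ in dimensions $(2n,n)$ --- together with the corresponding white vertex decorated by the appropriate boundary connected sum. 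The two boundary fibrations over $S^2$ then cobound via the product fibration over the cylinder $S^2\times[0,1]$, as explained at the end of subsection \ref{cutpaste}. Applying Theorem \ref{generalodd} to $(\Gamma_0, \overline{\Gamma_0})$ yields $1\leq \varphi(M^{2n}(\Gamma_0, \overline{\Gamma_0}), S^3)\leq 2$, producing a nontrivial example in every dimension $2n$ with $n\geq 3$.

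I expect no serious obstacle, as both ingredients are already in place. The only minor verification is that mirror-doubling automatically satisfies the cobounding hypothesis of Theorem \ref{generalodd}, which is standard; the existence of generalized Hopf links in the relevant dimensions $(2n,3)$ is guaranteed by the iterated projection construction of section \ref{subhalf}.
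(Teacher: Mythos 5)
Your argument is correct and is essentially the paper's own: the corollary is derived there in one line from Theorem \ref{generalodd} (covering $n\geq 3$ by taking $k=n-3$, where the parity hypothesis is vacuous because $n-k=3$ is odd) together with Proposition \ref{products}, whose proof supplies $\varphi(S^4,S^3)=2$ for the remaining case $n=2$. Your explicit choice of the mirror double $(\Gamma_0,\overline{\Gamma_0})$ to guarantee the cobounding hypothesis is precisely the doubling device the paper introduces at the end of Section \ref{cutpaste}, so no new input is needed.
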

This is a consequence of Theorem \ref{generalodd}  and the proof of Proposition \ref{products}.

The second part of this paper aims at a deeper understanding of these examples when  $n$ is even and, in particular, 
to approach the case when $n-k$ is even in theorem \ref{generalodd}. 

A necessary condition for $M^{2n}$ to admit a fibration over $S^k$ is that $\chi(M^{2n})=0$, when $k$ is odd and 
$\chi(M^{2n})\equiv 0 \; ({\rm mod} \; 2)$, for even $k$.  When $n$ is even there are stronger requirements for a manifold to be a fibration over $S^n$. Recall that the {\em signature} of the compact oriented $M$ is set to be zero unless its 
dimension is multiple of $4$, in the later case being the signature of the symmetric 
bilinear form on the middle dimension cohomology 
given by cup product evaluated on the fundamental class.   
A classical theorem due to Chern, Hirzebruch and Serre  (\cite{CHS}) states that 
whenever we have a fibration $E\to B$ with fiber $F$ of oriented 
compact manifolds  such that the action of $\pi_1(B)$ on the cohomology 
$H^*(F)$ is trivial, then the signature is multiplicative, namely 
\[\sigma(M)=\sigma(B)\sigma(F).\] 
In particular this happens when 
$\pi_1(B)$ is trivial. This is known not to be true for general fibrations 
as for instance in the case of the Atiyah-Kodaira fibrations (see \cite{At,Kod}), which are fibrations of some 4-manifolds of signature 256 over surfaces. 
In particular, if  $\sigma(M)\neq 0$, then $\varphi(M,S^p)\geq 1$, for any $p$, thus also for even values of $p$.

Our next goal is the explicit computation of $\sigma(M(\Gamma_1,\Gamma_2,\ldots,\Gamma_p))$. 
Observe that for even $n$ we have 
$\sigma(M^{2n}(\Gamma_1,\Gamma_2,\ldots,\Gamma_p))\equiv s \; ({\rm mod} \, 2)$.

\begin{theorem}\label{signeven}
Consider $n$ even. There exist  graphs $\Gamma$  decorated by generalized Hopf links in dimensions $(2n,n-k)$ 
as in section \ref{subhalf} such that 
\[ \sigma(M(\Gamma))\neq 0.\] 
\end{theorem}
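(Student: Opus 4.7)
The argument rests on the observation, recorded in the paragraph immediately preceding the statement, that for $n$ even one has
\[\sigma(M^{2n}(\Gamma_1,\ldots,\Gamma_p)) \equiv s \pmod 2.\]
It therefore suffices to exhibit, for each even $n$ and each admissible $k$, a cobounding family $\Gamma_1,\ldots,\Gamma_p$ in which the total number $s$ of black vertices is odd.

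\textbf{First step: the mod-$2$ signature formula.} I would establish the congruence by splitting $M$ along the codimension-zero submanifolds $\partial D^{2n}_v$ surrounding each local model and invoking Novikov additivity. The local piece $D^{2n}_v$ at a black vertex is a disk, so has zero signature by itself; nevertheless, gluing produces a Maslov/Wall correction concentrated along the boundary $S^{2n-1}\supset K_A$ of the local model. The key computation is to identify this correction, modulo $2$, with the parity of the rank of the middle-dimensional intersection form of the Milnor page attached to the vanishing sphere created at $v$. The $(-1)^n$-symmetric form on the page built out of $\widetilde{A}$ is symmetric (since $n$ is even), and under the unimodularity hypothesis it contributes $1\pmod 2$. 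White vertices and the cobounding fibration $E$ over $S^{n-k}\setminus\sqcup D^{n-k}$ are products with a disk up to an even number of cell attachments, so their contribution is even. Summing over vertices one obtains $\sigma(M)\equiv s\pmod 2$.

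\textbf{Second step: construction with $s$ odd.} I would take $p=2$. Let $\Gamma_1$ consist of a single black vertex decorated by a generalized Hopf link $K_A$ (respectively $L_{\Pi^{k}K_A}$ for $k\ge 1$), with $A$ chosen unimodular and of small rank so that the open-book monodromy $\phi$ of $K_A$ is isotopic to the identity of the Milnor fibre $F$. Let $\Gamma_2$ be a single white vertex decorated by $F$, so that $\partial X(\Gamma_2)=F\times S^{n-k-1}$ is the trivial $F$-bundle. Since $\partial X(\Gamma_1)$ is an $F$-bundle over $S^{n-k-1}$ with trivializable monodromy, the two boundary fibrations cobound across $S^{n-k}\setminus(D^{n-k}_1\sqcup D^{n-k}_2)\simeq S^{n-k-1}\times[0,1]$. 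This yields a closed $M^{2n}$ with $s=1$, whence by the first step $\sigma(M)\not\equiv 0\pmod 2$ and in particular $\sigma(M)\ne 0$.

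\textbf{Main obstacle.} The delicate point is verifying the cobounding condition, i.e.\ arranging that the monodromy $\phi$ of the chosen fibered link is null-isotopic in $\mathrm{Diff}(F)$. For general Hopf links $\phi$ is a product of generalized Dehn twists along the vanishing spheres of $\widetilde A$ and is not itself trivial; however, one can kill its isotopy class either by stabilizing $F$ via boundary connected sums with copies of $S^{n-1}\times D^{k+1}$ (decorating further white vertices attached to $v$, which does not change $s$) or by exploiting the vanishing of $\pi_{n-k-1}(\mathrm{Diff}(F))$ in favourable ranges. The remaining verification, that the Novikov/Maslov contribution from a single Hopf-link singularity is indeed odd, is a linear-algebraic computation with $\widetilde A$ which I would carry out by picking the explicit model $A$ used in the cobounding.
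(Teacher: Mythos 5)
Your strategy cannot succeed, because for $n$ even the signature of every manifold produced by this construction is even (the paper in fact shows it is a multiple of $8$), so no parity argument can detect its nonvanishing. The congruence $\sigma(M)\equiv s \pmod 2$ quoted before the statement comes from $\sigma\equiv\chi\equiv b_n\pmod 2$ for closed oriented $4m$-manifolds together with the Euler characteristic count in the proof of Theorem \ref{generalodd}, and that count is carried out only for graphs \emph{without} white vertices. There each black vertex $v$ contributes $d_v$ middle-dimensional handles, where $d_v+1$ is its valence; for $n$ even the matrix $A_v$ is a $d_v\times d_v$ symmetric unimodular matrix with zero diagonal, hence an even indefinite unimodular form $\cong pE_8\oplus qH$, so $d_v=8p+2q$ is even and every black vertex has odd valence. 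A finite graph has an even number of odd-valence vertices, so $s$ is automatically even and the congruence only yields $\sigma\equiv 0\pmod 2$. Your attempt to reach $s=1$ by inserting a white vertex decorated by the fibre does not escape this: once white vertices are present the relation $\chi\equiv s$ breaks down (the correct statement is $\sigma\equiv\sum_v d_v\equiv 0\pmod 2$), and indeed for the tree of Theorem \ref{generaleven} with a single black vertex the paper computes $\sigma(M(\Gamma_0))=8p$, which is even although $s=1$. The precise point where your first step fails is the claim that each Hopf-link singularity contributes $1\pmod 2$: it contributes $d_v\equiv 0\pmod 2$.

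The paper's proof is of a different nature and some version of it seems unavoidable: it identifies the cup-product form of the block $X(\Gamma)$ with the explicit matrix $A^*(\Gamma)$ assembled from the linking matrices of $K_A$ in the canonical framing (Lemma \ref{KvsL} and Lemmas \ref{bilinearform}, \ref{bilinearform2}), observes that this form contains the nonsingular minor $A^{-1}$, which is integrally equivalent to $A\cong pE'_8\oplus qH$, so that $\sigma(X(\Gamma))=8p$, and then chooses $p\geq 1$; Novikov additivity transfers the nonvanishing to the closed manifold $M(\Gamma)$. You would need to replace your parity computation by an explicit evaluation of the intersection form along these lines.
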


\section{Proofs of Theorems \ref{generalodd}, \ref{63}, \ref{generaleven} and \ref{signeven}}\label{proofs}
\subsection{Preliminaries on fibered generalized Hopf  links in dimensions $(2n,n)$}
Denote by $K_i$, $0\leq i\leq d$, the components of $K_A$, 
which are indexed as the components of $L_{\widetilde{A}}$.  
Note that unlike arbitrary fibered links $K_A$ also have a {\em canonical framing} in $X_A$, namely a set of isotopy classes of 
parallel copies $K_i^{\sharp}\subset \partial N(K_i)$ obtained by intersecting the generic fiber of the 
given open book decomposition with the boundary of the link complement.
In particular, it makes sense to consider the diagonal of the linking matrices of $K_A$ whose entries 
are $lk(K_i^{\sharp}, K_i)$.  We can actually identify the link $K_A$ when $A$ is unimodular, in the Lemma below.
\begin{lemma}\label{KvsL}
If $A$ is unimodular then $K_A=L_{A^*}$, where the linking matrix in the canonical framing $A^*$ is the $(-1)^n$-symmetric matrix with entries: 
\[ A^*_{ij}=\left \{\begin{array}{ll}
(A^{-1})_{ij}, & {\rm if } \; 1\leq i, j \leq d;\\
-\sum_{k=1}^d (A^{-1})_{kj}, & {\rm if } \; i=0,  1\leq j\leq d;\\
\sum_{k=1}^d \sum_{l=1}^d(A^{-1})_{kl}, & {\rm if } \; i=j=0\\
\end{array}\right.
\]
\end{lemma}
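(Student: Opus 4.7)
The plan is to work in the common exterior $E := X_A\setminus N(K_A) = S^{2n-1}\setminus N(L_{\widetilde{A}})$, whose homology $H_{n-1}(E)\cong \mathbb{Z}^{d+1}$ carries two natural bases: the meridian classes $\{[\mu_i]\}_{i=0}^d$ of the old link $L_{\widetilde{A}}$, and the meridian classes $\{[m_i]\}_{i=0}^d$ of the new link $K_A\subset X_A\cong S^{2n-1}$. By definition of linking numbers, the canonical push-offs $K_i^{\sharp}$ (into a generic fiber of the open book decomposition of $X_A$) satisfy $[K_i^{\sharp}] = \sum_j A^*_{ij}[m_j]$, so the computation of $A^*$ reduces to expressing both $[K_i^{\sharp}]$ and $[m_j]$ in the old meridian basis and inverting.

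First I would read off the change-of-basis matrix $B$, defined by $[m_i]=\sum_j B_{ij}[\mu_j]$, directly from the surgery prescription. For $i=0$ the new solid torus $S^{n-1}\times D^n$ is glued so that the meridian of its core is identified with the old meridian $\mu_0$ of $S_0$, giving $[m_0]=[\mu_0]$. For $i\geq 1$ the solid torus $D^n\times S^{n-1}$ is glued so that the meridian of its core is identified with the preferred longitude $\lambda_i^{\sharp}$ of $S_i$, whose class in $H_{n-1}(E)$ is $\sum_k\widetilde{A}_{ik}[\mu_k]=[\mu_0]+\sum_{k=1}^d A_{ik}[\mu_k]$. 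In block form
\[
B=\begin{pmatrix} 1 & 0 \\ \mathbf{1} & A\end{pmatrix},\qquad
B^{-1}=\begin{pmatrix} 1 & 0 \\ -A^{-1}\mathbf{1} & A^{-1}\end{pmatrix},
\]
using that $A$ is unimodular.

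Next I would compute $C$, defined by $[K_i^{\sharp}]=\sum_j C_{ij}[\mu_j]$. For $i\geq 1$, pushing the core $\{0\}\times S^{n-1}$ radially to the boundary of the new solid torus yields a $\{pt\}\times S^{n-1}$ sphere that the gluing identifies with $\mu_i$, giving $[K_i^{\sharp}]=[\mu_i]$. For $i=0$, rather than tracking orientations through the gluing, I would exploit that a generic fiber $F$ of the open book is a chain in $E$ whose boundary is the disjoint union of the canonical push-offs $K_i^{\sharp}$, so $\sum_{i=0}^d[K_i^{\sharp}]=0$ in $H_{n-1}(E)$; this forces $[K_0^{\sharp}]=-\sum_{j=1}^d[\mu_j]$. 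Hence
\[
C=\begin{pmatrix} 0 & -\mathbf{1}^T \\ 0 & I\end{pmatrix}.
\]
Substituting $[m_j]=\sum_k B_{jk}[\mu_k]$ into $[K_i^{\sharp}]=\sum_j A^*_{ij}[m_j]$ gives $A^*B=C$, so $A^*=CB^{-1}$, and a direct block multiplication produces exactly the four blocks of $A^*$ stated in the lemma.

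The main point to get right will be the geometric identification of the new meridians and canonical push-offs in terms of the old meridian/longitude data, i.e. correctly reading off $B$ and the $C_{ij}$ for $i\geq 1$ from the surgery prescription. Using the open-book fiber relation to fix $[K_0^{\sharp}]$ is the clean way to avoid a more painful orientation-tracking through the (orientation-reversing) gluing maps of the solid tori, and it simultaneously explains why each column of $A^*$ must sum to zero.
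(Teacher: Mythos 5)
Your argument is correct and reproduces exactly the stated formula, but it is organized differently from the paper's proof, so a comparison is worthwhile. The paper fixes one component $K_s$ at a time, fills in all the other components to get a manifold $X_s$ with $H_{n-1}(X_s)\cong\Z$, writes a Mayer--Vietoris presentation of that group, and then exhibits an explicit evaluation homomorphism $ev$ whose values on the classes of $K_j$ and $K_s^{\sharp}$ are the linking numbers; this is done twice (for $s\neq 0$ and $s=0$), and one must check in each case that $ev$ is well defined and an isomorphism. You instead stay in the common exterior $E=S^{2n-1}\setminus N(L_{\widetilde A})=X_A\setminus N(K_A)$ and compress everything into the single identity $A^*=CB^{-1}$, where $B$ expresses the new meridians in the old meridian basis and $C$ expresses the canonical push-offs. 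The geometric inputs are the same in both arguments: the surgery prescription forces $[m_0]=[\mu_0]$ and $[m_i]=[\mu_0]+\sum_k A_{ik}[\mu_k]$ (the zero-framed longitude of $S_i$ --- the same implicit choice of untwisted gluing the paper makes in its relations $\Z\mu_0$ and $\Z\langle\sum_j A_{ij}\mu_j\rangle$), the push-offs $K_i^{\sharp}$ for $i\geq 1$ are the preferred meridian spheres $\mu_i$, and the fiber relation $\sum_i[K_i^{\sharp}]=0$ pins down $[K_0^{\sharp}]=-\sum_{j\geq 1}[\mu_j]$, which is exactly the paper's assertion that the class of $K_0$ is $-\sum_j\mu_j$ once the boundary orientation convention is fixed. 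What your packaging buys is that unimodularity of $A$ enters transparently as the invertibility of $B$ over $\Z$, the case split on $s$ disappears, and the identity $\sum_i A^*_{ij}=0$ for every $j$ (and likewise for rows) is explained a priori by the fiber relation rather than observed a posteriori. I checked the block multiplication $CB^{-1}$; it yields all four blocks of the stated $A^*$, with the $(-1)^n$-symmetry following from that of $A^{-1}$.
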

\begin{proof}
Let $X_s$ denote the result of filling all but the $s$-th boundary components using surgery as above. 
Then $X_s$ is $(n-2)$-connected and the Mayer-Vietoris sequence reads: 
\[ H_{n-1}(\sqcup_{j=0, j\neq s}^d K_i\times \partial D^{n})\to H_{n-1}(S^{2n-1}\setminus N(L_{\widetilde{A}}))
\oplus H_{n-1}(\sqcup_{j=0, j\neq s}^d K_i\times D^{n})\to H_{n-1}(X_s)\to 0\] 
If $X_A$ is homeomorphic to a sphere $H_{n-1}(X_s)\cong \Z$ and then 
the linking number $lk(K_j, K_s)$ in $X_A$ is the image of the class of $K_j$ in $\Z$.  
Moreover, $H_{n-1}(S^{2n-1}\setminus N(L_{\widetilde{A}}))\cong \oplus_{j=0}^d\Z\mu_j$, where the classes 
$\mu_j$ correspond to the meridians spheres around each boundary component. Let $\delta_j$ denote the generator 
of $H_{n-1}(K_j\times D^n)$.   We give $K_j$ the orientation induced as a boundary component of the fiber (which disagrees 
with the convention in \cite{AHSS}).  

If $s\neq 0$, then it follows (see the computations from \cite{AHSS}, proof of Lemma 3.4) that we have the presentation: 
\[ H_{n-1}(X_s)=\frac{\oplus_{j=0}^d\Z\langle \mu_j\rangle \oplus_{i=0, i\neq s}^d\Z\langle \delta_i\rangle}{\Z\langle\delta_0+\sum_{j=1}^d \mu_j\rangle 
\oplus_{1\leq i\leq d, i\neq s}\Z\langle\mu_i-\delta_i\rangle\oplus\Z \mu_0\oplus_{1\leq i\leq d, i\neq s}\Z\langle \sum_{j=1}^dA_{i j}\mu_j\rangle}\]
Further the homomorphism $ev:  H_{n-1}(X_s)\to \Z$ given on the generators by 
\[ ev(\mu_i)=(A^{-1})_{is}, 1\leq i\leq d, \;  \;  ev(\mu_0)=0 \]
\[ ev(\delta_i)=(A^{-1})_{is}, 1\leq i\leq d, \;  i\neq s, \; ev(\delta_0)=-\sum_{i\neq s} (A^{-1})_{is}\]
is  well defined, and it is an isomorphism since $A$ is invertible over $\Z$. 
The  class of $K_j$ and respectively $K_s^{\sharp}$ in $H_{n-1}(X_s)$ is represented by $\mu_j$, if $j\neq s$,  and hence 
\[ lk(K_j, K_s)=A^*_{js}, j\neq 0, lk(K_s^{\sharp}, K_s)=A^*_{ss}\]
Further, the class of $K_0$ is represented by $-\sum_{j=1}^d\mu_j$ and hence 
\[ lk(K_0, K_s)=-\sum_{j=1}^dA^*_{js}=A^*_{0s}\]
If $s=0$, we have a similar presentation of $H_{n-1}(X_0)$:
\[ H_{n-1}(X_0)=\frac{\oplus_{j=0}^d\Z\langle \mu_j\rangle \oplus_{i=1}^d\Z\langle \delta_i\rangle}
{\oplus_{1\leq i\leq d}\Z\langle\mu_i-\delta_i\rangle\oplus_{1\leq i\leq d}\Z\langle \mu_0+\sum_{j=1}^dA_{i j}\mu_j\rangle}\] 
Further the homomorphism $ev:  H_{n-1}(X_0)\to \Z$ given on the generators by 
\[ ev(\mu_i)=-\sum_{j=1}^d(A^{-1})_{ij}, 1\leq i\leq d, \;  \;  ev(\mu_0)=1 \]
\[ ev(\delta_i)=-\sum_{j=1}^d(A^{-1})_{ij}(A^{-1})_{is}, 1\leq i\leq d\]
is also an isomorphism. We derive: 
\[ lk(K_j, K_0)=-\sum_{i=1}^d(A^{-1})_{ji}=A^*_{j0}, j\neq 0\]
\[ lk(K_0^{\sharp}, K_0)=\sum_{j=1}^d\sum_{k=1}^d(A^{-1})_{jk}=A^*_{00}\]
\end{proof}
\subsection{Proof of Theorem \ref{generalodd}}
We only need to prove that $M(\Gamma_1,\ldots,\Gamma_p)$ 
does not fiber over $S^{n-k}$. For the sake of simplicity of exposition 
we will only consider the case where there are no insertion of trivial fiber bundles here and hence we can drop the 
decoration. Note that this implies that $\Gamma_i$ only contain black vertices and 
that there are no univalent vertices of $\Gamma_i$.

Now the Euler characteristic $\chi$ 
is multiplicative in locally trivial fiber bundles, namely for  any locally trivial  fibration  
$\pi:E\to B$ with fiber $F$ we have $\chi(E)=\chi(B)\chi(F)$. 
This is well-known to hold in the case when the action of $\pi_1(B)$ on the cohomology 
$H^*(F)$ is trivial, in particular when $\pi_1(B)=0$. The standard argument to prove this uses spectral sequences. 
Nevertheless, the multiplicativity of the Euler characteristic holds in full generality, 
as soon as $E,F$ and $B$ are finite CW complexes, by induction on the number of cells of the basis. 
This is obviously true when $B$ has only one cell, in which case $E$ is a product. Assume that the multiplicativity is true 
for fiber bundles over CW complexes with at most $N$ cells, and consider a complex $B$ with $N+1$ cells. Let $e^n$ be a $n$-cell of $B$. The restriction $\pi^{-1}(B-{e^n})\to B-e^n$ is a fiber bundle so that 
$\chi(\pi^{-1}(B-{e^n}))=\chi(B-{e^n})\chi(F)$.  By excision we have 
$H^*(E,\pi^{-1}(B-{e^n}))=H^*(e^n\times F, \partial e^n\times F)$. This implies that 
$\chi(E,  \pi^{-1}(B-{e^n}))=(-1)^n\chi(F)$ and hence 
$\chi(E)=\chi(\pi^{-1}(B-{e^n})+\chi(E,  \pi^{-1}(B-{e^n}))=\chi(B)\chi(F)$. This proves the induction step.

Thus a necessary condition for a space $E$ to fiber over the $S^{n-k}$, is that 
$\chi(E)=0$, if $n-k$ is odd and $\chi(E)\equiv 0 \;({\rm mod} \, 2)$, when 
$n-k$ is even, respectively.

One can compute 
$\chi(M(\Gamma_1,\ldots,\Gamma_p))$ using the local picture description of each singularity. 

Consider first the case when $k=0$. 
A critical point associated to a vertex 
of $\Gamma_i$ of valence $(d+1)$ comes with a local model 
whose link has $(d+1)$ components. 
As in the case of Lefschetz fibrations 
we obtain the local model from a  
fibration over the punctured disk $D^n-\{0\}$ with fiber 
$D^n-\sqcup_{i=1}^{d} D_i^n$ by  adjoining 
one singular fiber over $0$ which is the cone over 
the boundary. This amounts to adjoin to the trivial 
fibration over $D^n$ a number of $d$ 
handles of index $n$, corresponding to 
crushing the vanishing cycle 
$\bigvee_{d} S_i^{n-1}$ to a point.   
This handlebody description can be turned into 
a cell-decomposition, and therefore each local model 
corresponds to a fibration with $d$ cells of dimension 
$n$ adjoined. 
Gluing together all local models by the patchwork 
explained in the introduction produces a block $X(\Gamma_j)$ obtained from a fibration over $D^n$ with 
$t_j$ cells of dimension $n$ added, where $t_j=2m_j-s_j$, 
$m_j$ being the total number of edges in the $\Gamma_j$ and 
$s_j$ the total number of vertices. Since each vertex has 
valence at least $2$ we have $m_j-s_j \geq 0$. An alternative argument is to observe that 
$X(\Gamma_j)$ deformation retracts onto the singular fiber, which is obtained from the the regular fiber by contracting the 
attaching $(n-1)$-spheres corresponding to the $n$-handles above. This shows that 
the dimension of the co-kernel of $H_n(\partial X(\Gamma_j))\to H_n(X(\Gamma_j))$ equals $t_j$. 
    
Therefore 
\[\chi(M(\Gamma_1,\Gamma_2,\ldots, \Gamma_p))=\chi(S^n)\chi(\sharp_{g}S^1\times S^{n-1})
+(-1)^nt =-g((1+(-1)^n)^2+(-1)^nt,\]
where $t$ is the sum of all $t_j$.  
When $n$ is odd $\chi(M(\Gamma_1,\Gamma_2,\ldots, \Gamma_p))=-t\neq 0$ and hence 
it cannot be a fibration over some $n$-manifold. When $n$ is even 
$\chi(M(\Gamma_1,\Gamma_2,\ldots, \Gamma_p))\equiv -t\equiv s\not\equiv 0 \; ({\rm mod} \, 2)$, and thus 
it cannot fiber over $S^n$.  

Let now consider the case $k\geq 1$, by analyzing first the local picture. 
The link of each critical point is now connected. However, there exists 
a collection of disjoint embedded spheres $S^{n-1}$ embedded in the 
local fiber $F_{\Pi^k\psi_L}$, which is diffeomorphic to 
a $(n+k)$-disk  with $d$ copies of $(n-1)$-handles attached to it.
The singular fiber is then homeomorphic to a cone over the link. 
Therefore a regular neighborhood of the singular fiber is 
homeomorphic to the result of attaching $d$ handles of index $n$  
to the regular neighborhood of a generic fiber.   
This description permits to use the arguments above for $k=0$. 
We conclude as above.

\begin{remark}
When singular points arise from the fibered links above, each critical point $s$ contributes 
with $\chi(F_s)-1$ to $\chi(M)$, where $F_s$ is the local fiber around $s$. This holds more generally 
for all fibered links. On the other hand if dimensions  were of the form $(2n+1, k)$, then 
local fibers should verify $\chi(F_s)=1$, according to \cite{ADD,J,MS}. This shows that the contribution 
of every critical point is trivial in odd dimensions and hence the previous arguments cannot work.  
\end{remark}

\subsection{Proof of Theorem \ref{63}}
Every open book fibration $S^5-N(K)\to S^2$ has a simply connected 
fiber $F^3$ (see e.g. \cite{AHSS}). By Perelman's solution to the Poincar\'e Conjecture 
$F^3$ is a disk with holes, and thus $K$ is a disjoint union of spheres 
$S^2$. Therefore $K$ is a generalized Hopf link $L_Q$, for some  matrix $Q$. 
Moreover $L_Q$ is fibered if and only if $Q=\widetilde{A}$, where $A$ is unimodular, according to \cite{AHSS}. 
Thus  for any smooth map $f:M^6\to S^3$ with finitely many 
cone-like critical points there are neighborhoods of the critical points 
to which the restriction of $f$ is equivalent to some local model. Outside these neighborhoods 
the restriction of $f$ should be a locally trivial fiber bundle. Therefore 
$M^6$ is diffeomorphic to some 
$M^6(\Gamma_1,\Gamma_2,\ldots,\Gamma_p)$, 
where $\Gamma_i$ are bicolored decorated graphs and 
$f$ arises as above. We suppose that $M^6$ is not a fibration over $S^3$.  
Every graph $\Gamma_i$ has at least one black vertex, as otherwise we could 
remove it. Each  decorated graph $\Gamma_i$ determines 
$f_{\tilde{\Gamma_i}}: X^6(\Gamma_i)\to D^3$, whose generic fiber is some closed 3-manifold 
$F$, which is independent on $i$. 

Notice that the union $V$ of singular fibers of $f$ is a CW complex 
of dimension 3 embedded in $M$, so that 
$\pi_1(M^6-V^3)\to \pi_1(M^6)$ is an isomorphism. 
The long exact sequence in homotopy associated to the fibration
$f|_{M-V}$ implies that $\pi_1(F^3)\to \pi_1(M^6)$ is surjective, with kernel free abelian.  
Let $F_{ij}$ and $D_{ij}=D^3-\sqcup_{s=1}^{n_{ij}} 
D_s^3$ denote the 3-manifolds with boundaries which occur as labels of the 
white vertices and black vertices, respectively, 
of the graph $\Gamma_i$.  The key point is that local fibers $D_{ij}$ are simply connected. Then  
the generic fiber $F$ is obtained from the  (graph) connected sum 
of $F_{ij}$ and $D_{ij}$. The block $X^6(\Gamma_i)\setminus V$ is the union
of fibered pieces $D^6_v\setminus V$ associated to black vertices $v$  and 
$F_{ij}\times (D^3\setminus\{0\})$ associated to decorated white vertices. 
Moreover we glue together two such adjacent pieces along  the submanifold 
$N(L_{\widetilde{A(v)}})\setminus L_{\widetilde{A(v)}}$, which is simply connected, by transversality. 
Also $\pi_1(D^6_v\setminus V)=1$. Then Van Kampen's theorem implies that 
the inclusion of $F$ into $X^6(\Gamma_i)\setminus V$ induces an isomorphism at the level of fundamental groups, 
and hence $\pi_1(X^6(\Gamma_i)\setminus V)$ is isomorphic to $\pi_1(F)\cong *_{j} \pi_1(F_{ij}) *\mathbb F_{r}$, 
where $r$ is the rank of $H_1(\Gamma_i)$. 
We obtain $M^6\setminus V$ by first gluing together several blocks $X^6(\Gamma_i)\setminus V$
along neighborhoods of boundary fibers and second gluing to the result a trivial fibration $F\times D^3$  
along the whole boundary $F\times S^2$.   Further use of Van Kampen's theorem show that the inclusion of $F$ 
into $M$ is also an isomorphism at the fundamental group level.

Every black vertex $v$ of some $\Gamma_i$ has associated a link 
of the form $L_{\widetilde{A}}$, where  $A$ is unimodular (see \cite{AHSS}). 
But unimodular skew-symmetric matrices have to be of even size, so that every black vertex $v$ has odd degree. 
Assume that $\pi_1(M)$ has not a free factor, so that $r=0$. 
Then each  $\Gamma_i$ should have only one black vertex, since  otherwise the valence of a black vertex being  odd 
it would be at least $3$ and this would produce a free factor in $\pi_1(F)$. The local fiber associated to this black vertex is $D^3\setminus\sqcup_{s=1}^dD_s^3$. 
 Each $F_{ij}$  must have one boundary component; if some $F_{ij}$ had at least two boundary components 
then gluing the local fiber $D^3\setminus\sqcup_{s=1}^dD_s^3$ would produce a free factor in $\pi_1(F)$. 
Thus  the generic fiber $F$ of $f$ is diffeomorphic to $\sharp_{s=1}^dF_s$. 

Suppose now that $\pi_1(M)=1$. Then $F$ is simply connected and hence, by Perelman it is diffeomorphic to $S^3$. 
Moreover, each $F_{ij}$ is diffeomorphic to a disk. 
The computation of the Euler characteristic from the previous section gives us 
\[ \chi(M^6(\Gamma_1,\Gamma_2,\ldots, \Gamma_p))=-\sum_{i=1}^{p}d_i\] 
where $1+d_i\geq 3$ is the degree of the black vertex of $\Gamma_i$.  
In particular, if $\pi_1(M)=1$ and $\chi(M)\geq -1$ then $\varphi_c(M,S^3)=\infty$, as we supposed that 
$M^6$ does not fiber. 
This proves the claim.

\subsection{Proof of Theorem \ref{generaleven}} We need first the following: 
\begin{lemma}\label{trivialfibration} 
In dimensions $(2n,n)$ let the graph $\Gamma_0$ be a tree consisting of one black vertex 
decorated by the fibered link $K_A$ which is adjacent to $d+1\geq 2$ white vertices decorated by disks $D^n$. 
The gluing maps  correspond to the decomposition of $S^n$ as union of two smooth disks $D^n$ along an equatorial sphere. 
Then $\partial X^{2n}(\Gamma_0)$ is diffeomorphic to $S^{n}\times S^{n-1}$ and 
the boundary fibration $\psi_{\Gamma_0}:\partial X^{2n}(\Gamma_0)\to S^{n-1}$ is trivial. 
\end{lemma}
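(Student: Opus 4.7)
The plan is to exhibit $\partial X^{2n}(\Gamma_0)$ explicitly as $S^n \times S^{n-1}$ with the trivial projection, by decomposing it into two pieces that correspond to the two hemispheres of the sphere fiber. First I would unpack the cut-and-paste construction: with a single black vertex $v$ decorated by $K_A$ (components $K_0,\ldots, K_d$) and $d+1$ white vertices $w_i$, each decorated by $F(w_i) = D^n$, we have
\[
X^{2n}(\Gamma_0) = D_v^{2n} \cup \bigsqcup_{i=0}^d (D^n \times D^n),
\]
where each $\partial F(w_i) \times D^n = S^{n-1} \times D^n$ is glued to the component $K_i \times D^n$ of $N(K_A) \subset \partial D_v^{2n} = X_A$. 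Passing to boundaries yields
\[
\partial X^{2n}(\Gamma_0) = \bigl(X_A \setminus N(K_A)\bigr) \cup \bigsqcup_{i=0}^d (D^n \times S^{n-1}),
\]
glued along $(d+1)$ copies of $S^{n-1} \times S^{n-1}$.

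Next I would regroup, singling out $w_0$: set $\mathcal{A} := F(w_0) \times S^{n-1}$ and $\mathcal{B} := (X_A \setminus N(K_A)) \cup \bigsqcup_{i=1}^d (D^n \times S^{n-1})$, so that $\partial X^{2n}(\Gamma_0) = \mathcal{A} \cup \mathcal{B}$, glued along the single $S^{n-1} \times S^{n-1}$ sitting over $\partial N(K_0)$. The crucial step is to identify $\mathcal{B}$ with the trivial $D^n$-bundle $X_A \setminus N(K_0) \cong D^n \times S^{n-1}$. Since the tubular neighborhoods $N(K_i) \subset X_A$ for $i \ge 1$ are, by construction, copies of $D^n \times S^{n-1}$ glued with the framing coming from the surgery used to build $X_A$, and this framing matches the one prescribed for the cut-and-paste gluing of $F(w_i) \times D^n$, the effect of filling in $w_1,\ldots,w_d$ is precisely to reconstruct $X_A \setminus N(K_0)$.

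To finish the identification of $\mathcal{B}$ I would use that $K_0$ is unknotted in $X_A \cong S^{2n-1}$: the closure $\overline{F_\psi} \subset X_A$ of a generic open-book fiber is a smooth Seifert $n$-disk bounded by $K_0$, as recalled in section \ref{half}. The pencil of such Seifert disks parametrized by the base $S^{n-1}$ of $f_{K_A}$, with its $d$ transverse intersections with $K_1,\ldots,K_d$ filled via the corresponding white vertex decorations, gives a continuous family of $n$-disks that foliates $X_A \setminus N(K_0)$; this furnishes a diffeomorphism $X_A \setminus N(K_0) \cong D^n \times S^{n-1}$ intertwining the extended open-book fibration with the projection on the second factor. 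The other piece $\mathcal{A} = F(w_0) \times S^{n-1}$ is already the trivial $D^n$-bundle with projection to $S^{n-1}$.

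Finally, the hypothesis that the gluings correspond to the decomposition $S^n = D^n_+ \cup_{S^{n-1}} D^n_-$ identifies $\partial F(w_0) = S^{n-1}$ with $K_0 = S^{n-1}$ as the common equator, and the two $D^n$-fibers of $\mathcal{A}$ and $\mathcal{B}$ as the hemispheres $D^n_\pm$, with the second $S^{n-1}$-factor (the base) matched by the identity. Hence
\[
\partial X^{2n}(\Gamma_0) = \bigl(D^n_+ \times S^{n-1}\bigr) \cup_{S^{n-1} \times S^{n-1}} \bigl(D^n_- \times S^{n-1}\bigr) = S^n \times S^{n-1},
\]
with $\psi_{\Gamma_0}$ the projection on the second factor, i.e.\ the trivial bundle. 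The hardest point throughout is the middle paragraph: verifying that the white-vertex gluing framings really agree with those used to build $X_A$, so that $\mathcal{B}$ coincides with $X_A \setminus N(K_0)$ not merely as a manifold but as a fibered space.
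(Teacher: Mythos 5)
Your decomposition $\partial X^{2n}(\Gamma_0)=\mathcal{A}\cup\mathcal{B}$ and the final gluing of two trivial $D^n$-bundles over $S^{n-1}$ follow the same route as the paper, which phrases the whole computation as a single surgery on $L_{\widetilde{A}}$. But the step you yourself flag as the hardest is false as stated. The boundary piece $F(w_i)\times \partial D^n=D^n\times S^{n-1}$ coming from the white vertex $w_i$ is attached along $\partial N(K_i)$ so that the sphere $\partial F(w_i)\times\{pt\}$, which bounds the disk $F(w_i)\times\{pt\}$ in that piece, is sent to the \emph{preferred longitude} $K_i\times\{pt\}$ of $K_i$ (a page boundary). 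By contrast, $N(K_i)\subset X_A$ is a tubular neighborhood of $K_i$, in which it is the \emph{meridian} of $K_i$ that bounds a disk. These two fillings of $\partial N(K_i)$ cap off complementary spheres, so they are not the same: gluing in the $w_i$-pieces does \emph{not} reconstruct $N(K_i)$, and $\mathcal{B}$ is not $X_A\setminus N(K_0)$. The discrepancy is not cosmetic: if $\mathcal{B}$ really were $X_A\setminus N(K_0)$, the sphere bounding a disk in $\mathcal{B}$ along the remaining gluing torus would be the null-homologous longitude of $K_0$, whereas the preferred longitude $K_0^{\sharp}$ has class $lk(K_0^{\sharp},K_0)=A^*_{00}=\sum_{k,l}(A^{-1})_{kl}$ times the generator of $H_{n-1}(X_A\setminus N(K_0))\cong\Z$ by Lemma \ref{KvsL}, which is nonzero in general; gluing $\mathcal{A}$ along it would then not produce $S^n\times S^{n-1}$.

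The correct identification, and the one the paper uses, is $\mathcal{B}\cong S^{2n-1}\setminus N(S_0)$. Under the surgery description of $X_A$, the preferred longitude of $K_i$ for $i\ge 1$ is precisely the meridian sphere of $S_i$, so capping it off undoes the surgery on $S_i$ performed to build $X_A$ and restores the original tubular neighborhood $N(S_i)\subset S^{2n-1}$. Hence the $w_1,\dots,w_d$ fillings return $S^{2n-1}\setminus N(S_0)\cong D^n\times S^{n-1}$, in which the sphere bounding a disk on $\partial N(S_0)$ is the preferred longitude of $S_0=K_0$ --- exactly where the meridian of $\mathcal{A}$ is sent, whence $S^n\times S^{n-1}$ with the trivial projection. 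Note that your own description of the foliation of $\mathcal{B}$ by capped-off pages through ``transverse intersections with $K_1,\dots,K_d$'' is secretly this picture and not the one you claim: the pages of $f_{K_A}$ meet $K_1,\dots,K_d$ as binding components, while it is the pages of $f_{L_{\widetilde{A}}}$ that meet $S_1,\dots,S_d$ transversally. So the repair is local, but the framing verification must be carried out in terms of $L_{\widetilde{A}}$, not of $N(K_i)$.
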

\begin{proof}
We obtain $\partial X(\Gamma_0)$ by doing surgery on the link $K_{A}$, namely gluing to 
$S^{2n-1}\setminus N(K_A)$ the disjoint union of  $(d+1)$ solid tori $\sqcup_{j=0}^d D^n\times S^{n-1}$ such that
the $j$-th  copy of the solid torus $D^n\times S^{n-1}$ is glued along  $\partial N(K_j^{n-1})$  
such that $\partial D^n\times \{pt\}$ correspond to the preferred longitude spheres.  
This is the same as doing surgery on $L_{\widetilde{A}}$ corresponding to the framings given by the longitude 
around $S_0$ and the meridian spheres along $S_j$, $j\geq 2$. Surgery along meridian spheres restores the sphere 
$S^{2n-1}\setminus N(S_0)$, while surgery along the longitude of $S_0$ yields $S^{n-1}\times S^{n}$. 
The fibration structure of $\psi_{\Gamma_0}$ corresponds then to the projection onto $S^{n-1}$. 
\end{proof}

\begin{lemma} \label{trivialfibration2} 
In dimensions $(2n,n-k)$ with $k\geq 1$ let the graph $\Gamma_0$ with a single black vertex $v$ 
decorated by $L_{\Pi^kL}$, where $L$ is a $(n-1)$-dimensional generalized Hopf link with $d+1\geq 5$ components
and a white vertex connected by an edge. 
The white vertex $w$ is decorated by  $F_w={\sharp_{\partial}} _{j=1}^{d}D^{n}\times S^{k}$.
The gluing along $\partial F_w$ corresponds to surgery of the  core $k$-dimensional spheres 
 and hence the global fiber $F$ is  
diffeomorphic to $S^{n+k}$. Then 
$\partial X(\Gamma_0)$ is diffeomorphic to $S^{n+k}\times S^{n-k-1}$ and the 
boundary fibration $\partial X(\Gamma_0)\to S^{n-k-1}$ is trivial. 
\end{lemma}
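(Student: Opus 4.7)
The plan is to establish the lemma by reducing to Lemma \ref{trivialfibration} via a restriction argument. First I will unfold the construction: the boundary decomposes as
$$\partial X(\Gamma_0)=(S^{2n-1}\setminus N(L_{\Pi^k L}))\cup (F_w\times S^{n-k-1}),$$
glued along $L_{\Pi^k L}\times S^{n-k-1}=\partial F_w\times S^{n-k-1}$, and the map to $S^{n-k-1}$ is the projection onto the second factor on the white-vertex piece, while on the link-complement piece it coincides with the open book fibration $f_{L_{\Pi^k L}}=R\circ\bar{\Pi}^k\circ f_L$.

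Next I will invoke Lemma \ref{trivialfibration} applied to the $(n-1)$-dimensional generalized Hopf link $L$: after the surgery prescribed there, the link complement admits a product trivialization $S^{2n-1}\setminus N(L)\cong F_{\psi_L}\times S^{n-1}$ as bundles over $S^{n-1}$ via $f_L$, since the complement of the $d+1$ solid handles inside $S^n\times S^{n-1}$ is precisely $F_{\psi_L}\times S^{n-1}$ with $F_{\psi_L}=S^n\setminus\sqcup_{j=0}^d D^n_j$. Then I will identify the new binding via the factorization formula: from $f_{L_{\Pi^k L}}=R\bar{\Pi}^k f_L$ we have $L_{\Pi^k L}=L\cup f_L^{-1}(E)$, where $E=\{0\}\times S^{k-1}\subset S^{n-1}$ is the $\bar{\Pi}^k$-preimage of the origin. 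Consequently
$$S^{2n-1}\setminus N(L_{\Pi^k L})=(S^{2n-1}\setminus N(L))\cap f_L^{-1}(S^{n-1}\setminus N(E)),$$
which under the trivialization above becomes $F_{\psi_L}\times(S^{n-1}\setminus N(E))$. Since $E$ has trivial normal bundle of rank $n-k$ in $S^{n-1}$, the standard handle-duality yields $S^{n-1}\setminus N(E)\cong D^k\times S^{n-k-1}$, giving
$$S^{2n-1}\setminus N(L_{\Pi^k L})\cong F_{\psi_L}\times D^k\times S^{n-k-1}=F_{\Pi^k\psi_L}\times S^{n-k-1}$$
as trivial bundles over $S^{n-k-1}$.

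Finally, gluing $F_{\Pi^k\psi_L}\times S^{n-k-1}$ to $F_w\times S^{n-k-1}$ along the common boundary $L_{\Pi^k L}\times S^{n-k-1}$ respects the $S^{n-k-1}$ factor, so
$$\partial X(\Gamma_0)\cong (F_{\Pi^k\psi_L}\cup_\partial F_w)\times S^{n-k-1}\cong S^{n+k}\times S^{n-k-1},$$
with the fibration being the obvious projection onto the second factor, as required. The step requiring the most care is verifying compatibility of the Lemma \ref{trivialfibration} trivialization with the neighborhood structure near $f_L^{-1}(E)$, so that removing a tubular neighborhood of the full extended binding $L_{\Pi^k L}$ matches, on the product side, removing $F_{\psi_L}\times N(E)$; granted this, the gluing map on $\partial F_w\times S^{n-k-1}$ is by construction the identity on the $S^{n-k-1}$ factor and the identification is immediate.
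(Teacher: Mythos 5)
Your argument is essentially the paper's: the same boundary decomposition, the same reduction to Lemma \ref{trivialfibration} for the product structure at level $k=0$, and the same identification of the level-$k$ link complement as the part of that product lying over the complement of a neighborhood of the polar subsphere; the only difference is that you perform all $k$ projections at once via $S^{n-1}\setminus N(S^{k-1})\cong D^{k}\times S^{n-k-1}$, whereas the paper inducts on $k$, removing the preimage of $N(\{n,s\})$ one projection at a time. The compatibility point you flag at the end (that the trivialization restricts correctly near $f_L^{-1}(E)$ and on the glued solid tori) is exactly the point the paper's induction step also takes for granted, so your one-shot version is a faithful, slightly more compact rendering of the same proof.
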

\begin{proof}
We have the decomposition 
\[\partial X(\Gamma_0)=(S^{2n-1}\setminus ((\sharp_{j=1}^{d}S^{n-1}\times S^{k})\times D^{n-k})) \cup 
({\sharp_{\partial}} _{j=1}^{d}D^{n}\times S^{k})\times S^{n-k-1}\] 
along $\partial (E_v)_k= (\sharp_{j=1}^{d}S^{n-1}\times S^{k})\times S^{n-k-1}$. 
The result follows for $k=0$ from Lemma \ref{trivialfibration}. We use further induction on $k$. 
We add the subscript $k$ to all objects defined so far. 
If the claim holds for $k$ then $(E_v)_k\subset S^{n+k}\times S^{n-k-1}$ and the projection 
$f_k:(E_v)_k\to S^{n-k-1}$ is the restriction of the second factor projection. 
Note that $(E_v)_{k+1}=(R\overline{\Pi})^{-1}(S^{n-k-1}\setminus (D^{n-k-1}(s)\cup D^{n-k-1}(n)))$, where 
$D^{n-k-1}(s)$ and $D^{n-k-1}(n)$ are two disk neighborhoods 
of the north and the south
poles $n,s$ of $S^{n-k-1}$. Then the fibration $f_{k+1}: (E_v)_{k+1}\to S^{n-k-2}$  is the 
composition $(f_k)|_{(E_v)_{k+1}}: (E_v)_{k+1}\to  S^{n-k-1}\setminus (D^{n-k-1}(s)\cup D^{n-k-1}(n))$ 
with the projection $R\overline{\Pi}: S^{n-k-1}\setminus (D^{n-k-1}(s)\cup D^{n-k-1}(n))\to S^{n-k-2}$. 
Further $(E_v)_{k+1}$ is a subfibration of the product fibration 
\[ S^{n+k}\times S^{n-k-1}\setminus (D^{n-k-1}(s)\cup D^{n-k-1}(n)) \to S^{n-k-2}\] 
which itself is a subfibration of $S^{n+k+1}\times S^{n-k-2}\to S^{n-k-2}$. 

It remains to observe that during the process of filling the fibration  $(E_v)_{k+1}$ we adjoined along the boundary 
$((F_w)_{k+1}\times [0,1])\times S^{n-k-2}$, namely $(F_w)_{k+1}\times S^{n-k-2}$. This proves the induction claim. 
\end{proof}

If  $k=0$ we consider the manifold $M(\Gamma_0)=X(\Gamma_0)\cup_{\partial X(\Gamma_0)} S^{n}\times D^n$. 
First $\pi_1(M(\Gamma_0))=0$ and further by Mayer-Vietoris 
$H_j(M(\Gamma_0))=0$, for $1\leq j\leq 2n-1$, $j\neq n$ and $H_n(M(\Gamma_0))=\Z^{d+2}$.  
Thus $M(\Gamma_0)$ is $(n-1)$-connected.

Assume  that $M(\Gamma_0)$ fiber over $S^n$ with fiber $F^n$.  
Then the long exact sequence of the fibration shows that $F^n$ must be $(n-2)$-connected. 
Further, the Wang sequence  yields first:
\[\to  H_{n}(F)\to  H_{2n-1}(M)\to H_{n-1}(F^n)\to H_{2n-2}(F^n)\to H_{2n-2}(M)\to \]
and thus $H_{n-1}(F^n;\Q)=0$, as $n\geq 3$ and second: 
\[\to  0=H_1(F)\to H_{n}(F)\to  H_{n}(M)\to H_{0}(F^n)\to H_{n-1}(F^n)=0\]
and hence $H_0(F)$ has rank $d$, contradiction, thereby proving the claim. 

When $k\geq 1$ we consider $M(\Gamma_0)=X(\Gamma_0)\cup_{\partial X(\Gamma)} S^{n+k}\times D^{n-k}$.
It follows that $\pi_1(M(\Gamma_0))=0$ and by Mayer-Vietoris 
$H_j(M(\Gamma_0))=0$, for $1\leq j\leq 2n-1$, $j\not\in \{n-k,n+k\}$, while 
$H_n(M(\Gamma_0))=\Z^{d}$, $H_{n-k}(M(\Gamma_0))=H_{n+k}(M(\Gamma_0))=\Z$. 
Assume  that $M(\Gamma_0)$ fibers over $S^{n-k}$ with fiber $F^{n+k}$.  
Then the long exact sequence of the fibration shows that $F$ is connected and simply connected. 
The Wang exact sequence 
\[ H_{q}(F)\to  H_{q}(M(\Gamma_0))\to H_{q-n+k}(F)\to H_{q-1}(F)\to H_{q-1}(M(\Gamma_0))\to \]
for $q=2n-1, 2n-2,\cdots, n+k+2$ yields    
\[ H_{n+k-1}(F)=H_{n+k-2}(F)=\ldots =H_{2k+2}(F)=0\]
Further, for $q=n+k+1$ we obtain the exact sequence
\[ 0=H_{n+k+1}(M(\Gamma_0))\to H_{2k+1}(F)\to H_{n+k}(F)\to  H_{n+k}(M(\Gamma_0))\to H_{2k}(F)\to H_{n+k-1}(F)=0\]
which implies that 
\[{\rm rk} H_{2k+1}(F;\Q)={\rm rk} H_{2k}(F;\Q)=u\in\{0,1\}\]
 
If $n\geq 2k+2$ then we can consider $q=n+k-1,\ldots, n$ and derive
\[ H_{2k-1}(F)=H_{2k-2}(F)=\cdots =H_{n}(F)=0\]
From  the exactness of 
\[ 0= H_{n}(F)\to H_{n}(M(\Gamma_0))\to H_{k}(F)\to H_{n-1}(F) \]
we obtain ${\rm rk} H_k(F;\Q)\geq {d}$. But $H_n(F;\Q)=0$, as $n\geq 2k$ and this contradicts the 
Poincar\'e duality for $F^{n+k}$.

If $n=2k+1$ 
then from the exact sequence 
\[ H_{2k+1}(F)\to H_{2k+1}(M(\Gamma_0))\to H_{k}(F)\to H_{2k}(F)\to H_{2k}(M(\Gamma_0))\]
we derive that  both the kernel and the cokernel of the map $H_{2k+1}(M(\Gamma_0);\Q)\to H_{k}(F;\Q))$ 
has rank at most ${\rm rk}(H_{2k+1}(F;\Q)\leq 1$. 
This implies that ${\rm rk}H_{k}(F;\Q)\geq {d}-2$. 
But ${\rm rk}H_{n}(F;\Q)\leq 1$,  from above and this contradicts the Poincar\'e duality for $F^{n+k}$.

If $2k\geq n$ let $a$ be the smallest positive integer such that $a(n-k-1)\geq 2k-n$. 
By using induction and the Wang sequence we obtain, for all natural $m\leq a$ that we have:   
\[ H_{n+k-m(n-k-1)}(F)=\cdots =H_{2k+2-m(n-k-1)}(F)=0\]
\[ {\rm rk}H_{2k+1-m(n-k-1)}(F;\Q)={\rm rk}H_{2k-m(n-k-1)}(F;\Q)=u\in\{0,1\}\]

By letting $q=n+k-1,\cdots, n\geq 2k-a(n-k)$ we derive again that 
${\rm rk}H_n(F;\Q)\leq 1$ while ${\rm rk} H_k(F;\Q)\geq{d}-2$, contradiction. 
This shows that $M(\Gamma_0)$ cannot fiber over $S^{n-k}$.

{\subsection{Proof of Theorem \ref{odd}}
\begin{lemma}\label{trivialfibration3} 
In dimensions $(2n+1,n)$ let the graph $\Gamma_0$ be a tree consisting of one black vertex 
decorated by the fibered link $SK_A$ which is adjacent to $d+1\geq 2$ white vertices, one of which decorated by the disk $D^{n+1}$ and the remaining white vertices are decorated by $S^1\times D^n$. The gluing maps correspond to the decomposition of 
$S^{n+1}$ into the union of two disks $D^{n+1}$ along an equatorial sphere. 
Then 
$\partial X^{2n+1}(\Gamma_0)$ is diffeomorphic to $S^{n+1}\times S^{n}$ and 
the boundary fibration $\psi_{\Gamma_0}:\partial X^{2n+1}(\Gamma_0)\to S^{n}$ is trivial. 
\end{lemma}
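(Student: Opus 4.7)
The plan is to follow the template of Lemma \ref{trivialfibration}, bridging via the spinning construction from section 2.4. First I would describe $\partial X^{2n+1}(\Gamma_0)$ via cut-and-paste: starting from $S^{2n}$, one removes a tubular neighborhood of $SK_A$ and glues in $D^{n+1}\times D^n$ along the $S^n_0$ component and $S^1\times D^n\times D^n$ along each $S^1\times S^{n-1}_j$ component, contributing $D^{n+1}\times S^{n-1}$ and $d$ copies of $S^1\times D^n\times S^{n-1}$ respectively to the boundary. The boundary fibration inherits $f_{SK_A}$ on $S^{2n}\setminus N(SK_A)$ and the second-factor projection on each filling piece; its fibers reassemble the generic fiber $SF^{n+1}$ with the missing $D^{n+1}$ and $S^1\times D^n$ pieces restored, yielding a closed fiber diffeomorphic to $S^{n+1}$.

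Next, the identification with a product and the triviality of the fibration would follow by spinning the conclusion of Lemma \ref{trivialfibration}. By construction $SK_A$ is the spin of $K_A\subset S^{2n-1}$ around the axis $\mathbb{R}^{2n-2}=\partial H_+^{2n-1}$, with the preferred component $S_0^{n-1}$ straddling the axis as a hemisphere and the remaining components lying in the interior of $H_+^{2n-1}$. Under this identification, the fillings prescribed by $\Gamma_0$ are exactly the spins of the fillings used in the analogous even-dimensional graph $\Gamma_0'$ from Lemma \ref{trivialfibration}: the $D^{n+1}$-filling along $S^n_0$ is the spin of the longitudinal $D^n$-filling along $S_0^{n-1}$, and each $S^1\times D^n$-filling is the spin-orbit of the corresponding meridional $D^n$-filling. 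Hence $\partial X^{2n+1}(\Gamma_0)$ is obtained by spinning $\partial X^{2n}(\Gamma_0')$ around the same axis.

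Since Lemma \ref{trivialfibration} gives $\partial X^{2n}(\Gamma_0')\cong S^n\times S^{n-1}$ with trivial fibration to $S^{n-1}$, I would choose the trivialization so that the $S^{n-1}$-factor is embedded on the rotation axis (pointwise fixed by the $S^1$-action) and the $S^n$-factor is obtained by doubling a hemispheric disk $D^n_+\subset H_+^{2n-1}$ across the axis. Spinning then sweeps this hemispheric $D^n_+$ into an $S^{n+1}$ while preserving the second-factor projection, and thus delivers both the claimed product diffeomorphism for $\partial X^{2n+1}(\Gamma_0)$ and the triviality of the boundary fibration $\psi_{\Gamma_0}$.

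The main technical obstacle will be the bookkeeping of framings: one must verify that the canonical longitudinal and meridional framings of $K_A$ invoked in Lemma \ref{trivialfibration} lift under the spin to the framings that the open book decomposition $f_{SK_A}$ prescribes on $\partial N(SK_A)$. For the auxiliary components this compatibility is transparent because the whole $S^1$-orbit structure is respected by the spin; for the preferred component, whose topological type changes from $S^{n-1}$ to $S^n$ rather than being multiplied by $S^1$, one must check that the hemispheric disk filling agrees with the open-book trivialization around $S^n_0$, which amounts to a calculation with the natural trivializations of the normal bundle.
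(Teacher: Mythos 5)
Your opening surgery description of $\partial X^{2n+1}(\Gamma_0)$ agrees with the paper's: one glues $D^{n+1}\times S^{n-1}$ along $\partial N(S_0^n)$ so that $\partial D^{n+1}\times\{pt\}$ matches the longitude spheres inherited by spinning, and $S^1\times D^n\times S^{n-1}$ along each $\partial N(S^1\times S_j^{n-1})$ matching the meridians, and the closed-up fiber is $S^{n+1}$. (Note that, as you implicitly realized, the base must be $S^{n-1}$ and the total space $S^{n+1}\times S^{n-1}$; the exponents $n$ in the statement are dimensionally impossible and are corrected by the later gluing $M(\Gamma_0)=X(\Gamma_0)\cup_{\partial}S^{n+1}\times D^{n}$.) Where you diverge is in identifying the surgered manifold. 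The paper argues directly inside $S^{2n}$: the fillings of the components $S^1\times S_j^{n-1}$ restore $S^{2n}\setminus N(S_0^n)$, where $S_0^n$ is the unknotted spun sphere, so $S^{2n}\setminus N(S_0^n)\cong D^{n+1}\times S^{n-1}$; the remaining longitudinal filling then doubles $D^{n+1}\times S^{n-1}$ along $S^n\times S^{n-1}$, visibly producing $S^{n+1}\times S^{n-1}$ with the fibration equal to the second-factor projection. You instead try to deduce the answer by exhibiting $\partial X^{2n+1}(\Gamma_0)$ as the spin of $\partial X^{2n}(\Gamma_0')$ and then "spinning" the conclusion of Lemma \ref{trivialfibration}.

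The gap is in that last step. The spin of a manifold is not determined by its diffeomorphism type: it depends on the decomposition into the swept half $W_+$ and the fixed axis locus. Knowing from Lemma \ref{trivialfibration} that $\partial X^{2n}(\Gamma_0')\cong S^n\times S^{n-1}$ is therefore not enough; you must show that the decomposition induced on $\partial X^{2n}(\Gamma_0')$ by positioning $K_A$ relative to $\R^{2n-2}$ is the standard one, i.e.\ that the upper half is $D^n_+\times S^{n-1}$ meeting the axis exactly in $\partial D^n_+\times S^{n-1}$. You simply assert that the trivialization can be "chosen" this way, but Lemma \ref{trivialfibration} supplies only an abstract diffeomorphism, and producing an axis-adapted one amounts to redoing its surgery computation relative to the half-space --- at which point you have reconstructed the paper's direct argument rather than avoided it. (There is a second, smaller issue of the same nature for the preferred component, which you do flag: its filling changes topological type under spinning, so it is not literally an $S^1$-orbit of the $2n$-dimensional filling and needs the separate normal-bundle check you mention.) The fix is either to carry out that relative version of Lemma \ref{trivialfibration} explicitly, or, more economically, to drop the spinning reduction and finish as the paper does: after the meridian fillings one is looking at the double of $D^{n+1}\times S^{n-1}$ along $\partial D^{n+1}\times S^{n-1}$, fiberwise over $S^{n-1}$.
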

\begin{proof}
Let assume that the component $S_0^{n-1}$ of $K_A=L_{A^*}$ is spun. We consider the link 
$L_{\widetilde{A}}=\sqcup_{j=0}^d S_j^{n-1}\subset S^{2n-1}$ as the boundary of a holed disk. The spun component 
$S_0^n$ inherits a longitude by spinning the one of $S_0^{n-1}$, while the other components $S^1\times S_j^{n-1}$   
inherit well-defined meridians by taking their product with $S^1$.  
We obtain $\partial X(\Gamma_0)$ by doing surgery on the link $SK_{A}$, namely gluing to 
$S^{2n}\setminus N(SK_A)$ the disjoint union 
$D_0^n\times S^{n}\sqcup_{j=1}^d S^1\times S^{n-1}\times D_j^n$ such that the $j$-th copy of $S^1\times S^{n-1}\times D_j^n$ is glued along $\partial N(K_j^{n})$ and $D_0^n\times S^{n}$ is glued along $\partial N(K_0^{n})$. 
Surgery along $\partial N(K_j^{n})$ identifies $\partial D_0^n$ with the longitude of $S_0^n$ and 
$S^1\times \partial D_j^n$ with the meridian of $S^1\times S_j^{n-1}$. 
By completing the last surgeries one restores the sphere $S^{2n}\setminus N(S_0)$, while surgery along the longitude 
of $S_0^n$  yields $S^{n+1}\times S^{n}$.  
The fibration structure of $\psi_{\Gamma_0}$ corresponds then to the projection onto $S^{n}$. 
\end{proof}
We consider the manifold $M(\Gamma_0)=X(\Gamma_0)\cup_{\partial X(\Gamma_0)} S^{n+1}\times D^{n}$. 
First $\pi_1(M(\Gamma_0))=0$ and further by Mayer-Vietoris 
$H_j(M(\Gamma_0))=0$, for $1\leq j\leq 2n$, $j\not\in\{n,n+1\}$, 
$H_n(M(\Gamma_0))=H_{n+1}(M(\Gamma_0))=\Z^{d+1}$. Thus $M(\Gamma_0)$ is $(n-1)$-connected. 

Assume  that $M(\Gamma_0)$ fibers over $S^n$ with fiber $F^{n+1}$.  
Then the long exact sequence of the fibration shows that $F^{n+1}$ must be $(n-1)$-connected and the induced map 
$\pi_n(S^n)\to \pi_{n-1}(F)$ is surjective so that $\pi_{n-1}(F)\cong H_{n-1}(F)$ has rank at most $1$. 
Further, the Wang sequence  yields first:
\[\to  H_{2n}(F)\to  H_{2n}(M(\Gamma_0))\to H_{n}(F)\to H_{2n-1}(F^{n+1})\to H_{2n-1}(M(\Gamma_0))\to \]
and thus $H_{n}(F^{n+1};\Q)=0$, as $n\geq 3$ and second: 
\[\to  0=H_1(F)\to H_{n}(F)\to  H_{n}(M(\Gamma_0))\to H_{0}(F)\to H_{n-1}(F)\to H_{n-1}(M(\Gamma_0))=0\]
and hence $H_0(F)$ has rank at least $(d+1)$, which  contradicts the $(n-1)$-connectedness of $F$.  
This proves the claim.

\begin{lemma}\label{trivialfibration4} 
Consider the dimensions $(2n+1,n-k)$, $k\geq 1$ and the graph $\Gamma_0$ with a single black vertex $v$ 
decorated by $L_{\Pi^kSL}$, where $L$ is a $(n-1)$-dimensional generalized Hopf link with $d+1\geq 5$ components
and a white vertex connected by an edge. 
The white vertex $w$ is decorated by  $F_w=\left({\sharp_{\partial}} _{j=1}^{d}D^{n}\times S^{k+1}\right)\sharp_{\partial}\left({\sharp_{\partial}} _{j=1}^{d}S^{k}\times D^{n+1}\right)$.
The gluing along $\partial F_w$ is the one arising in surgery of the  $(k+1)$ and $k$-dimensional  core spheres   and the global fiber $F$ is then 
diffeomorphic to $S^{n+k+1}$. Then 
$\partial X(\Gamma_0)$ is diffeomorphic to $S^{n+k+1}\times S^{n-k-1}$ and the 
boundary fibration $\psi_{\Gamma_0}:\partial X(\Gamma_0)\to S^{n-k-1}$ is trivial. 
\end{lemma}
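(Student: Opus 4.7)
The plan is to mirror the proof of Lemma \ref{trivialfibration2}, with Lemma \ref{trivialfibration3} playing the role of the base case in an induction on $k$. First I would decompose
\[
\partial X(\Gamma_{0})=\bigl(S^{2n}\setminus N(L_{\Pi^{k}SL})\bigr)\cup_{\partial F_{w}\times S^{n-k-1}}\bigl(F_{w}\times S^{n-k-1}\bigr),
\]
where the link complement on the left carries the open book fibration $f_{L_{\Pi^{k}SL}}$ over $S^{n-k-1}$ and the product on the right carries its second-factor projection. The lemma then reduces to showing that the union of these two fibrations over $S^{n-k-1}$ is trivial with fiber $S^{n+k+1}$.

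The base case $k=0$ of the induction is essentially Lemma \ref{trivialfibration3}, once one reinterprets its $(d+1)$ white vertices (one decorated by $D^{n+1}$, the remaining $d$ decorated by $S^{1}\times D^{n}$) as a single white vertex labelled by the boundary connected sum of those pieces; that lemma gives $\partial X\cong S^{n+1}\times S^{n-1}$ fibering trivially over $S^{n-1}$, which is the $k=0$ instance of $S^{n+k+1}\times S^{n-k-1}$.

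For the induction step I would copy the argument of Lemma \ref{trivialfibration2} almost verbatim, writing $(E_{v})_{k}$ for the link-complement side. Assuming $(E_{v})_{k}\subset S^{n+k+1}\times S^{n-k-1}$ fibers trivially over $S^{n-k-1}$ via the second projection, one further application of $\Pi$ yields $(E_{v})_{k+1}=(R\overline{\Pi})^{-1}\bigl(S^{n-k-1}\setminus(D^{n-k-1}(n)\cup D^{n-k-1}(s))\bigr)$, where $R\overline{\Pi}\colon S^{n-k-1}\setminus(D^{n-k-1}(n)\cup D^{n-k-1}(s))\to S^{n-k-2}$ is the radial projection through the poles. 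This exhibits $(E_{v})_{k+1}$ as a subfibration of $S^{n+k+1}\times\bigl(S^{n-k-1}\setminus(D^{n-k-1}(n)\cup D^{n-k-1}(s))\bigr)\to S^{n-k-2}$, which embeds in the trivial fibration $S^{n+k+2}\times S^{n-k-2}\to S^{n-k-2}$. Matching $\partial(E_{v})_{k+1}$ with $\partial F_{w}\times S^{n-k-2}$ under the prescribed gluings and filling in $F_{w}\times S^{n-k-2}$ then reassembles $S^{n+k+2}\times S^{n-k-2}$ trivially, completing the induction.

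The main obstacle is verifying the global fiber statement: surgery on the $d$ core spheres $\{0\}\times S^{k+1}$ and the $d$ core spheres $S^{k}\times\{0\}$ inside $F_{w}$ must turn the open book fiber of $L_{\Pi^{k}SL}$ into $S^{n+k+1}$. Conceptually, the $S^{k}\times D^{n+1}$ summands account for the spun $S_{0}^{n}$ component of $SL$ and its iterated projections, while the $D^{n}\times S^{k+1}$ summands account for the spun $S^{1}\times S_{j}^{n-1}$ components; surgery on their core spheres reverses the corresponding handle attachments and leaves two dual $(n+k+1)$-disks whose union is $S^{n+k+1}$.
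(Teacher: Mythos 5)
Your skeleton matches the paper's proof exactly: decompose $\partial X(\Gamma_0)$ into the link complement and $F_w\times S^{n-k-1}$, take Lemma \ref{trivialfibration3} as the base case, and run the induction on $k$ verbatim from Lemma \ref{trivialfibration2}. You have also correctly located where the real work lies, namely in identifying the local fiber and the gluing. But that is precisely the part you do not carry out, and it is the part that occupies almost all of the paper's proof.

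Concretely, the paper first shows by an explicit handle argument that $SF^{n+1}$ is obtained from $D^{n+1}$ by attaching $d$ handles of index $n$ and then $d$ handles of index $n-1$ along trivially embedded spheres whose attaching spheres bound disjoint disks; this is what justifies writing $SF^{n+1}$ as a boundary connected sum of $d$ copies of the $d=1$ model $D^{2}\times S^{n-1}\setminus D^{n+1}$, and hence
\[
SF^{n+k+1}_{(k)}\cong \sharp_{\partial}{}_{j=1}^{d}\,D^{k+2}\times S^{n-1}\ \sharp_{\partial}\ \sharp_{\partial}{}_{j=1}^{d}\,S^{n}\times D^{k+1},\qquad
L_{\Pi^{k}SL}\cong \sharp_{j=1}^{d}\,S^{k+1}\times S^{n-1}\ \sharp\ \sharp_{j=1}^{d}\,S^{n}\times S^{k}.
\]
Only with this identification does the gluing prescription in the statement make sense, and the global fiber is then the connected sum of $2d$ copies of $S^{n+k+1}$ arising from the two standard decompositions $D^{k+2}\times S^{n-1}\cup S^{k+1}\times D^{n}=S^{n+k+1}$ and $S^{n}\times D^{k+1}\cup D^{n+1}\times S^{k}=S^{n+k+1}$ --- not, as you write, ``two dual $(n+k+1)$-disks whose union is $S^{n+k+1}$.'' Your heuristic pairing of the $S^{k}\times D^{n+1}$ summands with the spun component and of the $D^{n}\times S^{k+1}$ summands with the $S^{1}\times S_{j}^{n-1}$ components points in the right direction, but without the handle computation (in particular the unknottedness and unlinking of the attaching spheres, which is what reduces everything to the $d=1$ case) you have established neither the diffeomorphism type of the local fiber nor that the glued-up fiber is a sphere; and the induction step also needs this, since the filling of $(E_v)_{k+1}$ by $(F_w)_{k+1}\times S^{n-k-2}$ presupposes knowing what $(F_w)_{k+1}$ and the local fiber are. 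So the proposal is the right plan, but its central step is asserted rather than proved.
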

\begin{proof}
If $L$ is fibered and $L=\partial F^n$, where the fiber $F^n=D^n\setminus\sqcup_{j=1}^dD_j^n$, then 
$SL$ is fibered and its associated fiber is $SF^{n+1}=D^{n+1}\setminus\sqcup_{j=1}^d S^1\times D_j^n$. 
We obtain $SF^{n+1}$ from $D^{n+1}\setminus\sqcup_{j=1}^d D_j^{n+1}$ by adjoining for each 
boundary component $\partial D_j^{n+1}$ one $(n-1)$-handle along a trivially embedded $S^{n-2}\subset S^n$.  
Therefore $SF^{n+1}$ is obtained from $D^{n+1}$ by first adding $d$ handles of index $n$ and further $d$ handles 
of index $(n-1)$, as above. The attaching spheres bound disjoint disks and hence $SF^{n+1}$ is the 
boundary connected sum of $d$ copies of the corresponding result for $d=1$, the later being $D^2\times S^{n-1}\setminus D_0^{n+1}$. Further $L_{\Pi^kSL}$ is fibered with fiber $SF_{(k)}^{n+k+1}=SF^{n+1}\times D^k$, which has the same
description of handles addition along $D^{n+k+1}$ as above.  
We obtain 
$SF_{(k)}^{n+k+1}=
{\sharp_{\partial}} _{j=1}^{d} D^{2+k}\times S^{n-1} {\sharp_{\partial}} _{j=1}^{d} S^n\times D^{k+1}$. 
Note that  $L_{\Pi^kSL}= {\sharp} _{j=1}^{d} S^{1+k}\times S^{n-1} {\sharp} _{j=1}^{d} S^n\times S^{k}$, 
for $k\geq 1$, in particular it is connected. 

The global fiber of $X(\Gamma_0)$ is the union of $SF_{(k)}^{n+k+1}$ and $F_w$. 
The gluing is the connected sum of gluings occurring in the following two spheres decompositions:  
$D^{2+k}\times S^{n-1} \cup S^{1+k}\times D^{n}=S^{n+k+1}$ and 
$S^n\times D^{k+1}\cup D^{n+1}\times S^{k}=S^{n+k+1}$ and thus the global fiber is $S^{n+k+1}$. 
 
The triviality of the $S^{n+k+1}$-fibration $\partial X(\Gamma_0)\to S^{n-k}$ follows by induction on $k$, as above.  
\end{proof}

Let now  $k\geq 1$ and consider $M(\Gamma_0)=X(\Gamma_0)\cup_{\partial X(\Gamma)} S^{n+k+1}\times D^{n-k}$.
It follows that $\pi_1(M(\Gamma_0))=0$ and by Mayer-Vietoris 
$H_j(M(\Gamma_0))=0$, for $1\leq j\leq 2n-1$, $j\not\in \{n-k,n,n+1,n+k+1\}$, while 
$H_n(M(\Gamma_0))=H_{n+1}(M(\Gamma_0))=\Z^{d}$, $H_{n-k}(M(\Gamma_0))=H_{n+k+1}(M(\Gamma_0))=\Z$. 
Thus $M(\Gamma_0)$ is $(n-k-1)$-connected. 
Assume  that $M(\Gamma_0)$ fibers over $S^{n-k}$ with fiber $F^{n+k+1}$.  
Then the long exact sequence of the fibration shows that $F$ is connected and simply connected. 
The Wang exact sequence 
\[ \to H_{q}(F)\to  H_{q}(M(\Gamma_0))\to H_{q-n+k}(F)\to H_{q-1}(F)\to H_{q-1}(M(\Gamma_0))\to \]
for $q=2n, 2n-1,\ldots, n+k+3$ yields  inductively:  
\[ H_{n+k}(F)=H_{n+k-1}(F)=\ldots =H_{2k+3}(F)=0\] 
Further, by taking $q=n+k+2$ we find that 
\[0=H_{n+k+2}(M(\Gamma_0))\to H_{2k+2}(F)\to H_{n+k+1}(F)\to H_{n+k+1}(M(\Gamma_0))\to H_{2k+1}(F)\to H_{n+k}(F)=0\]
Therefore  
\[{\rm rk} H_{2k+2}(F;\Q)={\rm rk} H_{2k+1}(F;\Q)=u\in \{0,1\}\]
If $n+1\geq 2k+3$ then we can consider $q=n+k,n+k-1,\ldots, n+1$ and derive 
\[ H_{2k}(F)=H_{2k-1}(F)=\cdots =H_{n+1}(F)=0\]
From the exactness of: 
\[ 0= H_{n+1}(F)\to H_{n+1}(M(\Gamma_0))=\Z^d\to H_{k+1}(F)\to H_{n}(F)\to H_{n}(M(\Gamma_0))\]
we obtain that ${\rm rk}H_{k+1}(F;\Q)\geq {d}$. 
But ${\rm rk}H_{n}(F;\Q)\leq 1$, as $n\geq 2k+2$ and this contradicts the Poincar\'e duality for $F^{n+k+1}$.  

If $n=2k+1$, then from the exact sequence 
\[ H_{2k+2}(F)\to H_{2k+2}(M(\Gamma_0))\to H_{k+1}(F)\to H_{2k+1}(F)\to H_{2k+1}(M(\Gamma_0))\]
we derive that  both the kernel and the cokernel of the map $H_{2k+2}(M(\Gamma_0);\Q)\to H_{k+1}(F;\Q))$ 
has rank at most ${\rm rk} H_{2k+2}(F;\Q)\leq 1$. 
This implies that ${\rm rk}H_{k+1}(F;\Q)\geq {d}-2$. 
But ${\rm rk}H_{n}(F;\Q)\leq 1$,  from above and this contradicts the Poincar\'e duality for $F^{n+k+1}$. 

If $2k\geq n$  let $a$ be the smallest positive integer such that $a(n-k-1)\geq 2k+1-n$. 
By using induction and the Wang sequence we obtain, for all natural $m\leq a$ that we have:   
\[ H_{n+k-m(n-k-1)}(F)=\cdots =H_{2k+3-m(n-k-1)}(F)=0\]
\[ {\rm rk}H_{2k+2-m(n-k-1)}(F;\Q)={\rm rk}H_{2k+1-m(n-k-1)}(F;\Q)=u\in\{0,1\}\]

By letting $q=n+k,\cdots, n+1 \geq 2k+1-a(n-k-1)$ we derive as above that 
 ${\rm rk}H_{n}(F;\Q)\leq 1$ while ${\rm rk}H_{k+1}(F;\Q)\geq {d}-2$, contradiction. 
This shows that $M(\Gamma_0)$ cannot fiber over $S^{n-k}$. 

\subsection{Proof of Proposition \ref{products}}
Let $F:X\rightarrow Y$ be a differentiable map. We denote by  
$d_xF:T_x(X)\rightarrow T_{F(x)}(Y)$ its differential 
at $x\in X$. 
If $(G,\cdot)$ is a Lie group, the left and right translations 
by $g\in G$ are the maps $L_g:G\rightarrow G$, $L_g(z)=gz$ and 
$R_g:G\rightarrow G$, $R_g(z)=zg$, respectively.
 Smooth maps $A:M\rightarrow G$, $B:N\rightarrow G$  have 
a well-defined multiplication by setting  
$A\odot B:M\times N\longrightarrow G$, 
$(A\odot B)(z,w)=A(z)B(w)$.

\begin{lemma}\label{lieproducts}
Let $M^m$, $N^n$ be smooth manifolds and $(G,\cdot)$ be a Lie group
of dimension $\dim G\leq \min(m,n)$. 
For any smooth maps 
$A:M\rightarrow G$, $B:N\rightarrow G$ we have:  
\begin{equation}
C\left(A\odot B\right)\subseteq C(A)\times C(B).
\label{eq10.02.13.2}
\end{equation}
\end{lemma}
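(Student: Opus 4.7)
The plan is to compute the differential of $A\odot B$ explicitly using the standard chain rule together with the well-known formula for the differential of the multiplication map of a Lie group, and then argue by contraposition.

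First I would write $A\odot B$ as the composition
\[
M\times N \xrightarrow{A\times B} G\times G \xrightarrow{\mu} G,
\]
where $\mu$ is group multiplication. The chain rule gives
\[
d_{(z,w)}(A\odot B)(u,v) = d_{(A(z),B(w))}\mu\bigl(d_zA(u),\, d_wB(v)\bigr).
\]
Using the standard identity
\[
d_{(g,h)}\mu(X,Y) = d_g R_h(X) + d_h L_g(Y),\qquad X\in T_gG,\ Y\in T_hG,
\]
this becomes
\[
d_{(z,w)}(A\odot B)(u,v) = d_{A(z)}R_{B(w)}\bigl(d_zA(u)\bigr) + d_{B(w)}L_{A(z)}\bigl(d_wB(v)\bigr).
\]

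Next I would prove the contrapositive of (\ref{eq10.02.13.2}): if $z\notin C(A)$ or $w\notin C(B)$, then $(z,w)\notin C(A\odot B)$. The hypothesis $\dim G\le \min(m,n)$ ensures that being a regular point for $A$ or $B$ means that the respective differential is surjective onto $T_{A(z)}G$ or $T_{B(w)}G$. Since $R_{B(w)}$ and $L_{A(z)}$ are diffeomorphisms of $G$, their differentials are linear isomorphisms between the relevant tangent spaces. Hence, if $d_zA$ is surjective, then for any $\xi\in T_{A(z)B(w)}G$ one solves $d_{A(z)}R_{B(w)}(d_zA(u))=\xi$ for some $u\in T_zM$ (taking $v=0$); symmetrically, if $d_wB$ is surjective, one takes $u=0$ and solves using $d_{B(w)}L_{A(z)}$. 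Either way, $d_{(z,w)}(A\odot B)$ is surjective, so $(z,w)$ is a regular point of $A\odot B$.

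The argument has essentially no obstacles; the only subtlety is making sure that the left/right translation formula for $d\mu$ is invoked correctly and that the dimension hypothesis is used to identify surjectivity with regularity. Once that is in place, the inclusion $C(A\odot B)\subseteq C(A)\times C(B)$ follows immediately from the contrapositive.
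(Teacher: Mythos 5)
Your proposal is correct and follows essentially the same route as the paper: both establish the identity $d_{(z,w)}(A\odot B)(u,v)=d_{A(z)}R_{B(w)}(d_zA(u))+d_{B(w)}L_{A(z)}(d_wB(v))$ and then argue by contraposition that surjectivity of $d_zA$ or $d_wB$ (which the dimension hypothesis equates with regularity) combined with the fact that left/right translations are diffeomorphisms forces $d_{(z,w)}(A\odot B)$ to be surjective. The only cosmetic difference is that you derive the formula from the chain rule applied to $\mu\circ(A\times B)$, whereas the paper quotes it directly, citing Michor for the special case.
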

\begin{proof}
We first need the following   easy formula 
(for the particular case $M=N=G$ and $A=B=id_{_G}$ see \cite[p. 42]{Michor}):
\begin{equation}
[d_{(x,y)}(A\odot B)](u,v)=(d_{B(y)}L_{A(x)})\left(d_{y}B(v)\right)+
(d_{A(x)}R_{B(y)})\left(d_xA(u)\right)
\label{eq10.02.13.1}
\end{equation}
for all $(u,v)\in T_x(M)\times T_y(N)\cong T_{(x,y)}(M\times N)$.
\label{prop10.02.13.1}
This implies that the image of $d(A\odot B)]_{(x,y)}$
is the subspace 
\[(d_{B(y)}L_{A(x)})\left(d_{y}B\left(T_y(N)\right)\right)+
(d_{A(x)}R_{B(y)})\left(d_{x}A\left(T_x(M)\right)\right)\]
If $(x,y)\in (M\times N)\setminus(C(A)\times C(B))$ then 
either $x$ is a regular point for $A$ or $y$
is a regular point for $B$. 
By symmetry we may assume that $x$ is a regular point of $A$  
and hence, by our assumptions on the dimensions, 
$(d_{x}A)\left(T_x(M)\right)=T_{B(y)}(G)$. 
Then, by the formula above 
the range of $d(A\odot B)_{(x,y)}$ contains 
\[(d_{A(x)}R_{B(y)})(d_{x}A)\left(T_x(M)\right)=
(d_{A(x)}R_{B(y)})(T_{A(x)}(G))\]
Since $R_{B(y)}$ is a diffeomorphism of $G$ 
the last  vector space is the same as 
$T_{A(x)B(y)}(G)=T_{(A\odot B)(x,y)}(G)$.  
This shows that $(x,y)$ is a regular point of $A\odot B$. 
\end{proof}

Therefore, if  $M^m$ and $N^n$ and smooth manifolds and 
$(G,\cdot)$ is a Lie group of dimension $\dim G\leq \min(m,n)$ such that 
$\varphi(M,G)$, $\varphi(N,G)$ are finite, then for any closed subgroup 
$H\subset G$ we have  $\varphi(M\times N,G/H)$ is finite and
\begin{equation}
\varphi(M\times N,G/H)\leq\varphi(M,G)\varphi(N,G).
\label{eq10.02.13.4}
\end{equation}

The right hand side inequalities follow from this inequality 
and the fact that $\varphi(S^4,S^3)=2$ (see \cite{AF1}),  
$\varphi\left(\sharp_sS^2\times S^2,S^3\right)=2s+2$ (see \cite{Fu,FPZ}). 

The left hand side inequalities follow from 
the fact that the manifolds considered do not fiber over $S^3$, by the same argument 
as in the proof of Proposition \ref{generalodd}.  In fact, we have first  $\chi(S^4\times \cdots \times S^4)=2^m$. 
Further, the Euler characteristic is almost additive, namely  
$\chi(M\sharp N)=\chi(M)+\chi(N)-(1+(-1)^n)$, for closed $n$-manifolds 
$M$ and $N$. Therefore we can compute: 
\[ \chi\left((\sharp_{r_1}S^2\times S^2)\times(\sharp_{r_2}S^2\times S^2)
\times \cdots \times (\sharp_{r_m}S^2\times S^2)\right)= 2^m(r_1+1)\cdots(r_m+1).\]

\begin{remark}
If $f:M^m\longrightarrow S^{n+1}$ ($m\geq n+1\geq 3$) is a smooth map with $r$ critical points, then one can construct a map 
$F$ with $rs$ critical points  by using fiber connected sums (see \cite{FPZ}, proof of Prop. 3.1). 
The target manifolds are of the form  $\sharp_{g}(S^1\times S^n)$.
Thus there are examples with finite $\varphi\left(M^{4m},\sharp_{g}(S^1\times S^2)\right)$. 
\end{remark}

\section{Signatures}

\subsection{Signature definition}
In order to compute the signature $\sigma(X(\Gamma))$ we need a description of the cup product $\cup$. 
Recall that for a $2n$-manifold with boundary $M$ we have $H^n(M,\partial M)\cong H_n(M)$ while 
$H^n(M,\partial M)\cong H_n(M,\partial M)^*$ by the universal coefficients theorem. 
The signature of $M$ is the one of the $(-1)^n$-symmetric 
bilinear form $\phi_M: H^n(M,\partial M)\times H^n(M,\partial M)\to \R$ given by: 
\[ \phi_M(x,y)=\langle x\cup y, [M]\rangle.\]
The adjoint of this bilinear form is the homomorphism $\phi_M: H_n(M)\to H_n(M)^*$ which 
could be identified (see e.g. \cite{Wall}) with the inclusion induced morphism in the long exact sequence: 
\[ H_n(\partial M)\to H_n(M)\stackrel{\phi_M}{\to}H_n(M,\partial M)\to H_{n-1}(\partial M)\]
 It follows that $\ker \phi_M$ is precisely the image of $H_n(\partial M)$ 
into $H_n(M)$. 

Our purpose is the explicit description of the bilinear form and its 
kernel in the case of $X(\Gamma)$.  
Unless explicitly stated we consider here $k=0$, the last subsection being concerned with the modifications to the present arguments for $k\geq 1$.

\subsection{Notation} 
Assume that we have a graph $\Gamma$ with vertices 
decorated by generalized fibered $(n-1)$-links $L_v$ 
such that $(S^{2n-1},L_v)$ are Neuwirth-Stallings pairs. 
We assume $n\geq 3$. 

On one hand $L_v$ are links of the form $K_{A_v}$ for some unimodular integer matrices $A_v$.  
The linking matrices  in the canonical framing $A^*_v$ of $K_{A_v}$  are given by Lemma \ref{KvsL}. For the sake of simplicity we assume that only white vertices which are labelled by disks can occur, whose effect is to cap off 
the boundary components. In particular we can realize trees $\Gamma$ whose leaves  are white vertices of this kind.

If $v$ is a vertex of $\Gamma$ we denote by $\Gamma_v$ the set of edges issued from $v$ and by $E(\Gamma)$ the set of 
all edges of $\Gamma$. The link $L_v$ has $d(v)$ components indexed by the edges in $\Gamma_v$. 

Let $E_v=S^{2n-1}-N(L_v)$ denote the link complement endowed with its canonical boundary trivialization. 
Thus $\partial E_v=\sqcup _{e\in \Gamma_v}(S^{n-1}\times S^{n-1})_e$, boundary components being indexed by the edges 
$e$ in $\Gamma_v$.  


\subsection{Homology of $X(\Gamma)$}
We have the map $f_{\Gamma}: X(\Gamma)\to D^n$, with one critical value and one singular fiber 
$V^n(\Gamma)=f_{\Gamma}^{-1}(0)$.  The retraction $D^n\to \{0\}$ lifts to a deformation 
retraction $X(\Gamma)\to V^n(\Gamma)$, so that 
\[ H_*(X(\Gamma))\cong H_*(V(\Gamma))\]
On the other hand the singular fiber $V^n(\Gamma)$ is obtained from the regular fiber $F^n$ by crushing vanishing cycles 
to points. Vanishing cycles on the local fiber $F_v$ correspond to the attaching spheres described above. Specifically, these are 
$(d(v)-1)$ embedded $(n-1)$-spheres carrying the homology of $F^n=S^n\setminus \sqcup _{i=1}^{d(v)}D_i^n$. 
The contribution of a white vertex $v$ to $V_n(\Gamma)$ is just the fiber $F_v$, which is a disk.

We can also obtain $V^n(\Gamma)$ by gluing along the pattern $\Gamma$ the local singular fibers $V_v$ which are cones along 
the boundary spheres in $\partial F_v$. Thus each edge $e$ of $\Gamma$ gives raise to a topological sphere $S^n_e\subset V_v$ obtained by suspending the sphere $S_e^{n-1}\subset \partial V_v\cong \partial F_v$ associated to the edge $e$ at 
two points corresponding to the two vertices of $e$.  Gluing together all the spheres $S^{n}_e$ by identifying the cone points 
corresponding to the same vertex  of $\Gamma$ 
we obtain $V^n(\Gamma)$. It follows that 
\[ H_*(V^n(\Gamma))\cong \left\{\begin{array}{ll}
 \oplus_{e\in E(\Gamma)}H_*(S^n_e) & {\rm for}\; *\neq 1\\
 H_1(\Gamma) &{\rm for}\; *= 1\\
 \end{array}
 \right. \]
In particular we have: 
\begin{lemma}\label{homology}
There is a preferred basis $\{\beta_e, e\in E(\Gamma)\}$ of $H_n(V(\Gamma))$ given by the $n$-cycles 
$S^n_e$. 
\end{lemma}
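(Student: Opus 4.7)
The plan is to verify rigorously the homology computation displayed just before the lemma and track through the isomorphism to identify the stated generators. Let $C \subset V(\Gamma)$ denote the finite discrete subspace consisting of the cone points, one per vertex of $\Gamma$. By construction $V(\Gamma) = \bigcup_{e \in E(\Gamma)} S^n_e$, and two spheres $S^n_e, S^n_{e'}$ meet exactly at those cone points corresponding to common vertices of $e$ and $e'$. In particular $S^n_e \cap C = \{v, w\}$ when $e = vw$ is a non-loop edge, and $S^n_e \cap C = \{v\}$ when $e$ is a loop at $v$.

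First I would observe that the pair $(V(\Gamma), C)$ is a good pair (the cone points admit conical contractible neighborhoods), so $H_*(V(\Gamma), C) \cong \widetilde H_*(V(\Gamma)/C)$. Collapsing all cone points to a single basepoint identifies the quotient as the wedge
\[
V(\Gamma)/C \;\simeq\; \bigvee_{e \in E(\Gamma)} S^n_e\big/(S^n_e \cap C),
\]
where each non-loop summand $S^n_e/\{v,w\}$ is homotopy equivalent to $S^n \vee S^1$ and each loop summand is just $S^n$. Consequently $\widetilde H_n$ of the wedge splits as a direct sum over $e$, with the summand corresponding to $e$ canonically generated by the class of the suspension sphere $S^n_e$ itself.

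Next I would run the long exact sequence of the pair $(V(\Gamma), C)$. Since $C$ is a finite set of points, $H_j(C) = 0$ for all $j \geq 1$. As $n \geq 3$, both $H_n(C)$ and $H_{n-1}(C)$ vanish, so the connecting maps force an isomorphism
\[
H_n(V(\Gamma)) \;\stackrel{\cong}{\longrightarrow}\; H_n(V(\Gamma), C) \;\cong\; \bigoplus_{e \in E(\Gamma)} \widetilde H_n\!\left(S^n_e/(S^n_e \cap C)\right) \;=\; \bigoplus_{e \in E(\Gamma)} \Z.
\]
Naturality of the long exact sequence applied to each individual pair $(S^n_e, S^n_e \cap C) \hookrightarrow (V(\Gamma), C)$ identifies the $e$-summand generator with the image of the fundamental class $[S^n_e] \in H_n(S^n_e)$ under the inclusion $S^n_e \hookrightarrow V(\Gamma)$. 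Setting $\beta_e$ to be this image yields the preferred basis claimed.

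The only mildly subtle point is ensuring that the good-pair hypothesis is satisfied and that the identification of the wedge in the quotient is correct when $\Gamma$ has loops or multi-edges; both cases are handled uniformly by the remark that $S^n_e/(S^n_e \cap C)$ has $\widetilde H_n = \Z$ regardless of whether one or two points are collapsed. No genuine obstacle is expected, since the analogous computation for $H_1$ recovers $H_1(\Gamma)$ from $H_0(C) \to H_0(V(\Gamma))$, matching the display preceding the lemma.
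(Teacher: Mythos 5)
Your proof is correct and follows essentially the same route as the paper, which simply asserts the displayed homology of $V(\Gamma)$ from its description as a union of $n$-spheres glued along cone points; you supply the standard rigorous justification via the long exact sequence of the pair $(V(\Gamma),C)$ and the wedge decomposition of $V(\Gamma)/C$. (One nitpick that does not affect the conclusion: for a loop edge the two suspension points of $S^n_e$ coincide, so $S^n_e$ is really $S^n$ with two points identified and its wedge summand is $S^n\vee S^1$ rather than $S^n$ --- but $\widetilde H_n$ is still $\Z$, so the basis statement is unchanged.)
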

Note that the links $L_v$ are naturally oriented, as they bound the local fiber $F_v$. This induces a well-defined 
orientation of the $n$-cycle representing $\beta_e$.

Recall that $F$ is diffeomorphic to $\sharp_{1}^g S^1\times S^{n-1}$, where $g$ is the rank of $H_1(\Gamma)$ and thus 
$H_2(F)\cong H_{n-2}(F)=0$, if $n\neq 3$. 

Now the boundary $E=\partial X(\Gamma)$ is endowed with a fibration over $S^{n-1}=\partial D^n$ with fiber $F$. 
The Wang sequence in homology with rational coefficients gives us first:
\[ H_{n+1}(F)\to H_{n+1}(E)\to H_2(F)\]
so that $H_{n+1}(E)=0$ and by duality $H_{n-2}(E)=0$. Further the Wang sequence reads
\[0 \to H_n(F)\to H_n(E)\to H_1(F)\to H_{n-1}(F)\to H_{n-1}(E)\to H_0(F)\to 0\]

\subsection{The cup product bilinear form}\label{cupproduct} Let  $A^*_v$ denote the $d(v)\times d(v)$ linking matrix in the canonical framing of the link $L_v$. We define the matrix $A^*_{\Gamma}$ indexed by the set of edges $E(\Gamma)$:
\[ A^*(\Gamma)_{ef}= \left \{\begin{array}{ll}
(A^*_v)_{ef}, & {\rm if } \; e\cap f=v;\\
(A^*_v)_{ef}+(A^*_w)_{ef}, & {\rm if } \; e\cap f=\{v,w\};\\
0, & {\rm if } \;  e\cap f=\emptyset\\
\end{array}
\right.
\]
\begin{lemma}\label{bilinearform}
The cup product bilinear form $\phi_{X(\Gamma)}$ is expressed by the matrix $A^*(\Gamma)$ in the basis 
$\{\beta_e, e\in E(\Gamma)\}$ of $H_n(V(\Gamma))$. 
\end{lemma}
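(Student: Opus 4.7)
The strategy is to convert the cup product computation into a geometric intersection computation on $H_n(X(\Gamma))$ and then localize the intersection of $\beta_e$ and $\beta_f$ to the vertices of $\Gamma$ shared by $e$ and $f$. By Poincar\'e--Lefschetz duality the form $\phi_{X(\Gamma)}$ on $H^n(X(\Gamma),\partial X(\Gamma))$ corresponds to the intersection form on $H_n(X(\Gamma)) \cong H_n(V(\Gamma))$, and the preferred basis of Lemma~\ref{homology} gives explicit sphere representatives. After a small perturbation near each cone point, each $S^n_e$ with $e = vw$ can be written as $D^v_e \cup_{K_e} D^w_e$, where $D^v_e \subset D_v^{2n}$ is a smooth $n$-disk bounded by the link component $K^v_e \subset L_v \subset \partial D_v^{2n}$, and the two halves are glued along the identification $K^v_e = K^w_e$ coming from the construction of section~\ref{cutpaste}.

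With the cycles realized as concrete unions of relative disks in each block, I would handle the off-diagonal case first. If $e \cap f = \emptyset$ the supporting disks sit in disjoint blocks, and $\beta_e \cdot \beta_f = 0$. If $e \neq f$ share a single vertex $v$, the disks $D^v_e$ and $D^v_f$ sit in the same ball $D_v^{2n}$ with disjoint boundaries $K^v_e, K^v_f \subset S^{2n-1}$; pushing $D^v_e$ off via the canonical framing of $K^v_e$ produces a transverse representative, and the standard linking-equals-intersection principle for $n$-disks in $D_v^{2n}$ whose boundaries lie on $S^{2n-1}$ gives $D^v_e \cdot D^v_f = lk(K^v_e, K^v_f)$, which equals $(A^*_v)_{ef}$ by Lemma~\ref{KvsL}. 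No other block contributes, so $\beta_e \cdot \beta_f = (A^*_v)_{ef}$.

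For the diagonal entries, and more generally the case $e \cap f = \{v, w\}$ of parallel edges sharing both endpoints, one pushes $S^n_e$ off itself via a global normal vector field that restricts at each endpoint to the canonical framing of $K^v_e$ and $K^w_e$. The matching of trivializations imposed by the construction of section~\ref{cutpaste} is exactly what ensures that the local push-offs glue to a closed sphere $\tilde S^n_e$; the intersections then split into contributions in $D_v^{2n}$ and in $D_w^{2n}$, and the preceding local computation applied at each endpoint produces $(A^*_v)_{ef}$ and $(A^*_w)_{ef}$ respectively, summing to the prescribed $(A^*_v)_{ef} + (A^*_w)_{ef}$. The main subtle point throughout is the consistent tracking of orientations: the orientations on the components $K^v_e$ induced by viewing $L_v$ as $\partial F_v$, the boundary orientations of the two blocks whose trivializations are identified under the gluing, and the global orientation of $X(\Gamma)$ must all be compatible with the sign conventions fixed in Lemma~\ref{KvsL}. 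Once these are aligned, assembling the local contributions over all shared vertices yields precisely the matrix $A^*(\Gamma)$.
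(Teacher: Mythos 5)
Your proof is correct and follows the same overall strategy as the paper: identify $\phi_{X(\Gamma)}$ with the intersection form on $H_n(X(\Gamma))\cong H_n(V(\Gamma))$, realize each $\beta_e$ by a sphere assembled from two $n$-disks in the local models $D^{2n}_v$, $D^{2n}_w$ bounded by the corresponding link components, localize intersections to shared vertices, and read off linking numbers in the canonical framing via Lemma~\ref{KvsL}. The off-diagonal and parallel-edge cases are handled exactly as in the paper. The one genuine difference is the diagonal: you compute $\phi_{X(\Gamma)}(\beta_e,\beta_e)$ directly by a global push-off of $S^n_e$ whose boundary data is the canonical framing at each end, which requires you to check that the two local push-offs glue (this is where the matching of trivializations from section~\ref{cutpaste} enters, as you note) and that the interior contribution over each half-disk is $lk(K_e^{\sharp},K_e)=(A^*_v)_{ee}$. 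The paper instead avoids this geometric verification by an algebraic trick: since the fundamental class of the regular fiber maps to $\sum_{e}\beta_e$ and lies in $\ker\phi_{X(\Gamma)}$, the relation $\phi_{X(\Gamma)}(\sum_e\beta_e,\beta_{e_0})=0$ together with the already-computed off-diagonal entries forces $\phi_{X(\Gamma)}(\beta_{e_0},\beta_{e_0})=(A^*_v)_{e_0e_0}+(A^*_w)_{e_0e_0}$, using that the rows of $A^*_v$ sum to zero. Your route is more geometric and self-contained but carries the burden of the framing/orientation bookkeeping you flag at the end; the paper's route gets the diagonal for free from the kernel element. Both are valid, and your answer agrees with $A^*(\Gamma)$ in all cases.
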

\begin{proof}
The inclusions $F\to E$, $E\to X(\Gamma)$ induce a morphism $H_n(F)\to H_n(X(\Gamma))$ whose image lies 
in the kernel of $\varphi_{X(\Gamma)}$. If we identify $H_n(X(\Gamma))$ to $H_n(V(\Gamma))$  then this has a simple description. 
Specifically, the fundamental class $[F]$ of $F$  is sent into 
$\sum_{e\in E(\Gamma)} \beta_e$. By the discussion above this element belongs to the kernel of $\phi_{X(\Gamma)}$.

Consider now two cycles $\beta_{e_1}$ and $\beta_{e_2}$ in $H_n(X(\Gamma))$. 
If $e_1\cap e_2=\emptyset$ then the intersection of these two cycles is trivial. Therefore 
\begin{equation}\label{bil1} \phi_{X(\Gamma)}(\beta_{e_1},\beta_{e_2})=0, \; {\rm if}\; e_1\cap e_2=\emptyset\end{equation}
Recall that $V^n_v\subset D^{2n}_v$ is a cone over $L_v=\sqcup_{e\in \Gamma_v} S^{n-1}_{e}$. 
Let $e_i=vw_i$, with distinct $w_i$.  There are two $n$-cycles in $D^{2n}_v$ which bound  $S^{n-1}_{e_1}$ and $S^{n-1}_{e_2}$, respectively; after putting them in general position their algebraic intersection number is $lk(S^{n-1}_{e_1}, S^{n-1}_{e_2})$  
(see e.g. \cite{Rolf}, 5.D, Ex. 9, p.134 for $n=2$). Moreover,  $S^{n-1}_{e_1}$ and $S^{n-1}_{e_2}$ also bound 
disjoint $n$-cycles in $D^{2n}_{w_1}$ and $D^{2n}_{w_2}$, respectively. Therefore we can perturb $\beta_{e_1}$ and $\beta_{e_2}$ in order to have algebraic intersection number  $lk(S^{n-1}_{e_1}, S^{n-1}_{e_2})$.  As this 
algebraic intersection number is an invariant of their homology classes we derive: 
\begin{equation}\label{bil2}\phi_{X(\Gamma)}(\beta_{e_1},\beta_{e_2})=lk(S^{n-1}_{e_1}, S^{n-1}_{e_2})=(A^*_v)_{e_1e_2}\end{equation}
Note that $A^*_v$ is the $d(v)\times d(v)$ linking matrix in the canonical framing of the link $L_v\subset \partial D_v$.

Moreover, if both edges $e\neq f$ have the same endpoints $v\neq w$ then a similar argument shows that: 
\begin{equation}\label{bil5}   \phi_{X(\Gamma)}(\beta_{e}, \beta_{f})
=(A^*_v)_{ef}+(A^*_{w})_{ef}
\end{equation}

Suppose further that $e_1=e_2$. 
If $n$ is odd, then the anti-symmetry of the bilinear form yields: 
\begin{equation}\label{bil3}\phi_{X(\Gamma)}(\beta_{e},\beta_{e})=0\end{equation}

If $n$ is even, as   $\sum_{e\in E(\Gamma)} \beta_e$ lies in the kernel of $\varphi_{X(\Gamma)}$ we derive: 
\[ \phi_{X(\Gamma)}(\sum_{e\in E(\Gamma)}\beta_e, \beta_{e_0})=\sum_{e\cap e_0\neq\emptyset}\phi_{X(\Gamma)}(\beta_e,\beta_{e_0})=0\]
Writing $e_0=vw$ we derive  
\begin{equation}\label{bil4}   \phi_{X(\Gamma)}(\beta_{e_0}, \beta_{e_0})= 
-\sum_{e\in \Gamma_v\setminus \{e_0\}} (A^*_v)_{ee_0}
-\sum_{e\in \Gamma_w\setminus \{e_0\}}(A^*_w)_{e_0e}=(A^*_v)_{e_0e_0}+(A^*_{w})_{e_0e_0}\end{equation}
\end{proof}

\subsection{Geometric interpretation of the kernel of $\phi_{X(\Gamma)}$}

By induction on the number of boundary components we find: 
\[ H_i(E_v)=\left \{\begin{array}{ll}
0, & {\rm if }\; i \not\in\{0, n-1\}\\
\Q^{d(v)}, & {\rm if } \; i =n-1\\
\end{array}\right.
\]
This makes sense also when $v$ is a white vertex and hence $d(v)=1$. 

We can represent classes in $H_{n-1}(E_v)$ by means of {\em meridian} $(n-1)$-spheres on 
$\partial E_v$, which are represented as $\{p\}\times \partial D^n\subset L_v\times D^n\subset S^{2n-1}$ 
after its identification with a regular neighborhood $N(L_v)$ of $L_v$ in $S^{2n-1}$ given by the trivialization. 
We have then a preferred basis of $H_{n-1}(E_v)=\Q\langle \mu_e, e\in \Gamma_v\rangle$.

In order to compute $H_*(E)$ we will use a refined version of the Mayer-Vietoris exact sequence. 
If we have an open covering $U_i$ of $E$ such that $U_i\cap U_j\cap U_k=\emptyset$ for distinct $i,j,k$, then
the sequence below is exact:  
\[  \to\oplus_{i <j} H_k(U_i\cap U_j) \to \oplus_i H_k(U_i)\to H_k(E)\to \oplus_{i<j} H_{k-1}(U_i\cap U_j)\to \]
By taking $U_i$ to be small neighborhoods of $E_v$, we derive that 
\begin{equation} H_n(E)=\ker \left(\oplus_{e\in E(\Gamma)}H_{n-1}((S^{n-1}\times S^{n-1})_e)\to \oplus_{v}H_{n-1}(E_v)\right)\end{equation}
where the map $H_{n-1}((S^{n-1}\times S^{n-1})_e)\to \oplus_{v}H_{n-1}(E_v)$ is the morphism induced by inclusion 
if $v$ is a vertex of $e$ and zero, otherwise. 

Let us give explicit cycles for classes in   $H_n(E)$. A $(n-1)$-cycle $(z_e)\in \oplus_{e\in \Gamma_v}H_{n-1}((S^{n-1}\times S^{n-1})_e)$  
is called {\em bounding} if its image by the inclusion induced morphism in $H_{n-1}(E_v)$ vanishes.  
Thus $H_n(E)$ is identified with the space of cycles $(z_e)_{e\in E(\Gamma)}$ which restrict to 
bounding $(n-1)$-cycles on every $\Gamma_v$.  
There exists a $n$-cycle $Z_v$ in $E_v$ such that $\partial Z_v=\sum_{e\in \Gamma_v}z_e$ holds in $E_v$. 
Therefore the union $Z=\cup_{v}Z_v$ is 
a $n$-cycle in $E$ representing the class $(z_e)_{e\in E(\Gamma)}$.

Recall that $(S^{n-1}\times S^{n-1})_e$ is endowed with a canonical trivialization issued from the open book structure of $L_v$, 
namely it is foliated by the $(n-1)$-spheres  arising as intersections between $\partial E_v$ and the local fibers.  This provides a family of isotopic $(n-1)$ spheres to be called {\em preferred longitudinal} spheres, in the homology class of the canonical framing. Now 
\[ H_{n-1}(\partial E_v)=\Q\langle \lambda_e, \mu_e, e\in \Gamma_v\rangle\] 
has a basis consisting in classes of the form $\lambda_e$ which are represented by the preferred longitudinal $(n-1)$-sphere in $(S^{n-1}\times S^{n-1})_e$ and the classes of meridian spheres $\mu_e$.   We want to describe the map 
\[ i_{e,v}:H_{n-1}((S^{n-1}\times S^{n-1})_e)\to \oplus_{v}H_{n-1}(E_v)\]
in the basis defined above. 
By the definition of the meridian classes  
\begin{equation} i_{e,v}(\mu_e)=\mu_e \end{equation} 

Recall that by Hurewicz there exists an isomorphism (for $n\geq 3$)
\[ \pi_{n-1}(E_v)\to H_{n-1}(E_v;\Z)\to \Z^{\Gamma_v}\]
The image of  the class of an embedded sphere $S^{n-1}$ in $E_v$  is given by the vector 
$(lk(S^{n-1},S^{n-1}_e))_{e\in \Gamma_v}$.  Note that the image of $\mu_e$ is the vector $(\delta_{ef})_{f\in \Gamma_v}$. 
Further the preferred longitudinal spheres $\lambda_e$ and $\lambda_f$ are isotopic in $E_v$ to $S^{n-1}_e$ and $S^{n-1}_f$ respectively, so that the linking number between the corresponding embedded spheres is 
\[ lk(\lambda_e,S^{n-1}_f)= (A^*_v)_{ef}\]
Since the union of all preferred longitudinal spheres $\lambda_e$, for $e\in \Gamma_v$ 
bounds a copy of the local fiber $F_v$ we have $\sum_{e\in \Gamma_f}i_{e,v}(\lambda_e)=0$ and hence 
\[ \sum_{e\in \Gamma_v}lk(\lambda_e, S^{n-1}_f)=0\]
This yields 
\[ lk(\lambda_e,S_e^{n-1})=-\sum_{f\neq e, f\in \Gamma_v}(A^*_v)_{ef}=(A^*_v)_{ee}\] 
This proves that the image of $\lambda_e$ by the Hurewicz isomorphism is the vector 
$((A^*_v)_{ef})_{f\in \Gamma_v}$.  
Therefore 
\begin{equation}i_{e,v}(\lambda_e)=\sum_{f\in \Gamma_v} (A^*_v)_{ef}\mu_f\end{equation} 
 This identifies $\ker i_{e,v}$ to the kernel of the linear map  expressed by the matrix $ ({\bf 1} | A^*_v)$ consisting of two 
square blocks  in the basis above. Note that $A^*_v$ is not of maximal rank. 

The description of the map $H_n(E)\to H_n(V(\Gamma))$ is as follows. The retraction $r$ respects the decomposition 
of $E=\cup_{v\in \Gamma}  E_v$ and $V(\Gamma)=\cup_{v\in \Gamma}  V_v$ and it induces a commutative diagram: 
\[ \begin{array}{ccc}
H_n(E) & \to & H_n(V(\Gamma)) \\
\downarrow & & \downarrow \\
\oplus_{v\in \Gamma} H_{n-1}(\partial E_v) & \to & \oplus_{v\in \Gamma} H_{n-1}(\partial V_v)\\
\downarrow & & \downarrow \\
\oplus_{v\in \Gamma} H_{n-1}(E_v) & \to & \oplus_{v\in \Gamma} H_{n-1} (V_v)\\
\end{array}
\]

Suppose that  the class in $H_n(E)$ is given by the vector $(z_e)_{e\in E(\Gamma)}$. Then 
the retraction $r:E\to V(\Gamma)$ acts at the level of $\partial E_v$ as the parallel transport in the trivial boundary fibration 
towards  $\partial V_v$. This means that the image of $z_e=n_e\lambda_e+m_e\mu_e$ in 
$H_{n-1}(\partial V_v)$ is $n_e\lambda_e$. 
Therefore we obtain 
\begin{equation} \ker\phi_{X(\Gamma)}= \left\{\sum_{e\in E(\Gamma)}n_e\beta_e; \exists\; m_e\in \Z, {\rm such \; that}\; (n_e\lambda_e+m_e\mu_e)_e\in \ker i_{e,v}, 
\forall e\in E(\Gamma), \forall v\in e\right\}\end{equation}
Now $(n_e\lambda_e+m_e\mu_e)_e\in \ker i_{e,v}$ if and only if 
\[ m_f=-\sum_{e\in \Gamma_v} n_e (A^*_v)_{ef}\]
Let $e=vw$. When computing $i_{e,w}$ we have to note a change in orientation as $E_v$ and $E_w$ induce different orientations on their common boundary. We find that $(n_e\lambda_e+m_e\mu_e)_e\in \ker i_{e,w}$ if and only if 
\[ m_f=\sum_{e\in \Gamma_w} n_e (A^*_w)_{ef}\]
Therefore 
\[ \ker\phi_{X(\Gamma)}=\left\{\sum_{e\in E(\Gamma)}n_e\beta_e; \sum_{e\in \Gamma_v} n_e(A^*_v)_{ef}+\sum_{e\in \Gamma_w} n_e(A^*_w)_{ef}=0, 
\forall f\in \Gamma_v\cap \Gamma_w\right\}\]
This coincides indeed with the left kernel of the linear map given by $A^*(\Gamma)$. 

When $\Gamma$ is a tree $H_1(F)=0$ and the inclusion induced map $H_n(F)\to H_n(E)$ is an isomorphism.  
In this case the map $H_n(E)\to H_n(X)$  can be identified with the inclusion induced map 
$H_n(F)\to H_n(X)$. After the identification  $H_n(X)\stackrel{\simeq}{\to} H_n(V^n(\Gamma))$ the previous map is the 
same as the map induced by the retraction $H_n(F)\to H_n(V^n(\Gamma))$. In this case the kernel is one dimensional by the Wang sequence above and 
hence: 
\begin{equation}\ker \phi_{X(\Gamma)}=\Q \left\langle \sum_{e\in E(\Gamma)} \beta_e\right\rangle\end{equation}

\subsection{Signature computation when $1\leq k \leq n-2$}\label{subsignature}
For the sake of simplicity we denote $L_{\Pi^k\psi_L}$ as $L_{\Pi^kL}$.

We only consider the following cases:
\begin{enumerate}
\item the graph $\Gamma$ consists of two black vertices $v,w$ and an edge. 
The decoration is given by links of the form $L_v=L_{\Pi^kL_1}$ and $L_w=L_{\Pi^kL_2}$, obtained by the procedure
of section \ref{subhalf} from the 
$(n-1)$-dimensional generalized Hopf links $L_1$ and $L_2$  in $S^{2n-1}$;
\item the graph $\Gamma_0$ consists of one black vertex $v$ and a white vertex $w$ connected by an edge. 
The white vertex $w$ is decorated by  $F_w={\sharp_{\partial}}_{d-1}D^{n}\times S^{k}$.
The gluing along $\partial F_v$  is the identity map of 
${\sharp_{\partial}}_{d-1}S^{n-1}\times S^{k}$ and the global fiber $F$ is then 
diffeomorphic to $S^{n+k}$. 
\end{enumerate} 
We define the matrices $A^*_{\Gamma}$ and $A^*_{\Gamma_0}$ indexed by the set of edges $\{1,2,\ldots,d\}$ 
(and not by the edges of the corresponding graphs):
\[ (A^*_{\Gamma})_{ef}= 
(A^*_v)_{ef}+(A^*_w)_{ef},  \; (A^*_{\Gamma_0})_{ef}= (A^*_v)_{ef}
\]

\begin{lemma}\label{bilinearform2}
The cup product bilinear form $\phi_{X(\Gamma)}$ and $\phi_{X(\Gamma_0)}$ are expressed by the matrices 
$A^*_{\Gamma}$ and $A^*_{\Gamma_0}$, respectively in their basis 
$\{\beta_e, e\in \{1,2,\dots,d\}\}$.  
\end{lemma}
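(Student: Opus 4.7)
The plan is to adapt the argument of Lemma~\ref{bilinearform} to the iterated-projection setting $k\geq 1$, using three ingredients: the deformation retraction of $X(\Gamma)$ onto its singular fiber $V(\Gamma)=f_\Gamma^{-1}(0)$, a geometric basis for $H_n(V(\Gamma))$ indexed by the $d$ handles of a local fiber, and the identification of algebraic intersection numbers with linking numbers in the canonical framing recorded in Lemma~\ref{KvsL}.

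First I would describe the singular fiber $V(\Gamma)$ explicitly. For each black vertex $v$, the local singular fiber is the preimage of $0$ under $\Pi^k\psi_{L_v}$, and it deformation retracts onto a space built from the cone on the link $L_{\Pi^kL_v}\cong \sharp_{j=1}^d S^{n-1}\times S^k$. Although this link is connected, $H_{n-1}(L_{\Pi^kL_v};\Q)\cong\Q^d$ is generated by the $d$ handle cores $S_j^{n-1}\times\{\mathrm{pt}\}$. Coning each such $(n-1)$-cycle inside $D_v^{2n}$ yields an embedded $n$-disk; in case~(1) the $n$-disks indexed by the same $j$ from $D_v^{2n}$ and $D_w^{2n}$ glue along their shared boundary sphere $S_j^{n-1}$ to form an $n$-cycle $\beta_j$, while in case~(2) the $n$-disk from $D_v^{2n}$ is capped off by an $n$-disk inside the product piece $F_w\times D^{n-k}$, using the prescribed matching of attaching spheres from \S\ref{subhalf}. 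A Mayer--Vietoris computation together with the identification of the global fiber $F\simeq S^{n+k}$ from Lemma~\ref{trivialfibration2} then shows that $\{\beta_e\}_{e=1}^d$ is a basis of $H_n(V(\Gamma))\cong H_n(X(\Gamma))$.

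Next I would compute the off-diagonal entries. For distinct $e,f\in\{1,\ldots,d\}$ a transverse perturbation makes the cycles $\beta_e$ and $\beta_f$ disjoint outside the local disks, so $\phi(\beta_e,\beta_f)$ localizes to a sum of vertex contributions. Each local contribution is the linking number of the handle cores $S_e^{n-1}$ and $S_f^{n-1}$ in the ambient $(2n-1)$-sphere in the canonical framing of the open book, which by Lemma~\ref{KvsL} equals $(A_v^*)_{ef}$, and analogously $(A_w^*)_{ef}$ in case~(1); summing over vertices gives the claimed off-diagonal formula.

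For the diagonal entries, I would compute the self-intersection $\phi(\beta_e,\beta_e)$ directly as the Euler number of the normal bundle of $\beta_e$ in $X(\Gamma)$, which is determined by the framing of the handle core $S_e^{n-1}$ inherited from its position in the open book decomposition; by the very definition of the canonical framing this self-linking is $(A_v^*)_{ee}$, plus the corresponding contribution from $w$ in case~(1). The main obstacle I anticipate is precisely this last step: in the $k=0$ case of Lemma~\ref{bilinearform} one could pin down the diagonal indirectly via the relation $\sum_e \beta_e\in\ker\phi_{X(\Gamma)}$ coming from the fundamental class of the global fiber, but here for $k\geq 1$ the global fiber is $S^{n+k}$ with $H_n(F)=0$, so no such shortcut is available and the normal framing of each handle core has to be tracked through the surgery construction of \S\ref{subhalf}. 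Once this is carried out the bilinear form in the basis $\{\beta_e\}$ agrees with $A^*_{\Gamma}$ and $A^*_{\Gamma_0}$ respectively, finishing the proof.
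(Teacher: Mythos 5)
Your setup (retraction onto the singular fiber, cycles $\beta_e$ obtained by coning the handle cores $S_e^{n-1}$ and capping them off in the adjacent piece, off-diagonal entries localized to linking numbers in the canonical framing via Lemma~\ref{KvsL}) matches the paper's argument. But there are two connected errors that leave a genuine gap. First, $\{\beta_e\}_{e=1}^{d}$ is \emph{not} a basis of $H_n(V(\Gamma))\cong H_n(X(\Gamma))$: the link $L_{\Pi^k L}$ is a connected sum of copies of $S^{n-1}\times S^k$, so $H_{n-1}$ of the link (and hence $H_n$ of its suspension) has rank $d-1$, and the paper computes $H_n(X(\Gamma);\Q)\cong\Q^{d-1}$ with the $d$ generators subject to the single relation $\sum_e\beta_e=0$. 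Geometrically this relation comes from the fact that the spheres $S_e^{n-1}$ together bound a copy of $(S^n\setminus\sqcup_i D_i^n)\times\{pt\}$ inside the local fiber ${\sharp_{\partial}}\,S^{n-1}\times D^{k+1}$ — it has nothing to do with the fundamental class of the global fiber $S^{n+k}$.

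Second, and as a direct consequence, your claim that ``no such shortcut is available'' for the diagonal entries is wrong, and this is precisely where your proof stops being a proof: you flag the diagonal computation as ``the main obstacle'' and propose to track normal framings through the surgery of \S\ref{subhalf}, but you do not carry this out. The paper's route is exactly the $k=0$ shortcut: since $\sum_e\beta_e=0$ holds in $H_n(X(\Gamma))$ (indeed in homology, which is stronger than merely lying in $\ker\phi$), one gets $\phi(\beta_{e_0},\beta_{e_0})=-\sum_{e\neq e_0}\phi(\beta_e,\beta_{e_0})$, and the row sums of $A^*_v$ vanish by Lemma~\ref{KvsL}, so the diagonal entries $(A^*_v)_{e_0e_0}$ (plus the $w$-contribution in case~(1)) drop out with no further geometric input; this is what the paper means by ``the arguments of section~\ref{cupproduct} carry over without essential changes.'' To repair your proposal you should either correct the rank of $H_n$ and invoke the relation, or actually complete the framing computation you defer; as written, the diagonal entries — and hence the identification of $\phi$ with $A^*_\Gamma$ — are not established.
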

\begin{proof}
 If $E_v=S^{2n-1}\setminus N(L_v)$ 
then $\partial X(\Gamma)$, which is also denoted $E=E_v\cup E_w$,  is obtained by gluing together the link complements in a way which respects the trivialization on the boundary.
  
The generic fiber $F$ of the map $X(\Gamma)\to D^{n-k}$ is the union of the local fibers 
$F_v\cup F_w$ along the link $L_v\cong L_w\cong\sharp_{d-1}S^{n-1}\times S^k$.  
Each local fiber $F_v$ or $F_w$  is diffeomorphic to ${\sharp_{\partial}}_{d-1}S^{n-1}\times D^{k+1}$. The homeomorphism  between $F_w$ and $(S^{n}\setminus \sqcup_{i=1}^d D_i^n)\times [0,1]^k$ also provides an embedding 
of the $(n-1)$-dimensional  link $L_1=\sqcup_{e=1}^{d}S_e^{n-1}\subset F_v$. Here 
the subscript $e\in \{1,2,\ldots,d\}$ corresponds to the numbering of the spheres in the link as in the previous section. 
Their homology classes $\{\beta_e, 1\leq e\leq d\}$ generate $H_{n-1}(F)$, and according to Mayer-Vietoris we have:  
\[ H_*(F;\Q)=\left\{ \begin{array}{ll}
\Q^{d-1}, & {\rm if}\; *\in \{k+1,n-1\}\\
\Q,  & {\rm if}\; *\in \{0,n+k\}\\
0, & {\rm otherwise}. 
\end{array}
\right.
\]
where, by notation abuse for $(n,k)=(3,1)$ the set $\{k+1,n-1\}$ reduces to a singleton $\{2\}$. 
Therefore $H_{n-1}(F;\Q)$ is identified to the quotient 
\[ H_{n-1}(F;\Q)=\Q\langle\beta_e, 1\leq e\leq d\rangle/(\sum_{e=1}^d\beta_e=0)\]

As in the case $k=0$ the block $X(\Gamma)$ retracts onto the singular fiber $V(\Gamma)$ which is the 
suspension $\Sigma(\sharp_{d-1}S^{n-1}\times S^k)$ of the link $L_v$, and therefore: 
\[ H_*(X(\Gamma);\Q)=\left\{ \begin{array}{ll}
\Q^{d-1}, & {\rm if}\; *\in \{k+1,n\}\\
\Q,  & {\rm if}\; *\in\{0,n+k\}\\
0, & {\rm otherwise}. 
\end{array}
\right.
\]
Similar computations also provide: 
\[ H_*(X(\Gamma_0);\Q)=\left\{ \begin{array}{ll}
\Q^{d-1}, & {\rm if}\; *=n\\
\Q,  & {\rm if}\; *=0\\
0, & {\rm otherwise}. 
\end{array}
\right.
\]

Observe that for $k=0$ the homology of   $H_n(\Sigma(\sharp_{d-1}S^{n-1}\times S^k);\Q)=\Q^d$, according with 
the previous section. 
 
The boundary fibration  $\partial E_v\to S^{n-k-1}$, which is the restriction of the corresponding fibration of $E_v$,  
extends over $D^{n-k}$ 
and hence $\partial E_v=(\sharp_{d-1}S^{n-1}\times S^k)\times S^{n-k-1}$.  
Denote by $\lambda_e\in H_{n-1}(\partial E_v)$ the preferred longitudinal classes of the cycles  
$S_e^{n-1}\times \{pt\}\times \{pt\}$ obtained by pushing the cycles $S_e^{n-1}$ along a direction of the local fiber.
We also defines the meridian classes $\mu_e\in H_{n-1}(\partial E_v)$ as being the classes of the cycles  
$\{pt\}\times S^k_e \times S^{n-k-1}$, where $S^k_e$ is a $k$-cycle linking once $S_e^{n-1}$ and trivially 
the others $S_f^{n-1}$, for $f\neq e$ corresponding 
to the boundary of the fiber disk $D^k$ of $\Pi^k$. It is immediate that $H_{n-1}(\partial E_v)=\Q^{2(d-1)}$ and a specific basis is
deduced from the generators system below:
\[ H_{n-1}(\partial E_v)=\Q\langle\mu_e, \lambda_e; 1\leq e\leq d\rangle/(\sum_{e=1}^d \mu_e=\sum_{e=1}^d \lambda_e=0)\]
We denote by the same symbols $\mu_e$ and $\lambda_e$ the images of these classes in the homology of $E_v$. 
Using Mayer-Vietoris we deduce that 
\[ H_n(E_v)=\left\{ \begin{array}{ll}
\Q^{d-1}, & {\rm if}\;  k=n-2\\
0, & {\rm otherwise}. 
\end{array}
\right.
\]
and a description in the non-trivial case $k=n-2$ is provided by the quotient 
\[ H_{n}(E_v)=\Q\langle\lambda_e\times S^1, 1\leq e\leq d\rangle/(\sum_{e=1}^d\lambda_e\times S^1=0)\]
Further we have: 
\[ H_{n-1}(E_v)=\Q\langle\mu_e, 1\leq e\leq d\rangle/(\sum_{e=1}^d\mu_e=0)\]
Again by Mayer-Vietoris we obtain that the boundary map induces an isomorphism: for $k\neq n-2$: 
\[ H_n(E)=\ker(H_{n-1}(\partial E_v)\to H_{n-1}(E_v)\oplus H_{n-1}(E_w))\]
However, this also holds when $k=n-2$. Indeed the map $H_n(E_v)\to H_n(V(\Gamma))$ factors through $H_n(V_v)=0$. 
In order to understand $H_n(E)$ we need to describe the map 
$i_{v}:H_{n-1}(\partial E_v)\to H_{n-1}(E_v)$. It is clear that 
\[ i_{v}(\mu_e)=\mu_e\]
The inclusion map $E_v\subset  S^{2n-1}\setminus L_1$ induces a homomorphism  
$H_{n-1}(E_v)\to H_{n-1}(S^{2n-1}\setminus L_1)\cong\Q^{d}$. Its image is the subspace
$\Q^{d-1}$ of vectors whose sum vanishes. 
By the computations from the previous section we have 
\[ i_{v}(\lambda_e)=\sum_{f=1}^d (A^*_v)_{ef} \mu_f\]
Note that the right hand is well-defined in $H_{n-1}(E_v)$. 

The description of the map $H_n(E)\to H_n(V(\Gamma))$ is similar to the case $k=0$. 
 The retraction $r$ respects the decomposition 
of $E=  E_v\cup E_w$ and $V(\Gamma)=V_v\cup V_w$ and it induces a commutative diagram: 
\[ \begin{array}{ccc}
H_n(E) & \to & H_n(V(\Gamma)) \\
\downarrow & & \downarrow \\
H_{n-1}(\partial E_v)\oplus H_{n-1}(\partial E_w) & \to & H_{n-1}(\partial V_v)\oplus H_{n-1}(\partial V_w)\\
\downarrow & & \downarrow \\
 H_{n-1}(E_v)\oplus H_{n-1}(E_w) & \to & H_{n-1}(V_v)\oplus H_{n-1}(V_w)\\
\end{array}
\]

Suppose that  the class in $H_n(E)$ is given by the vector $(z_v, z_w)\in H_{n-1}(E_v)\oplus H_{n-1}(E_w)$. Then 
the retraction $r:E\to V(\Gamma)$ acts at the level of $\partial E_v$ as the parallel transport in the trivial boundary fibration 
towards  $\partial V_v$. This means that the image of $z_v=\sum_{e=1}^d n_e\lambda_e+m_e\mu_e$ in 
$H_{n-1}(\partial V_v)$ is $\sum_{e=1}^{d}n_e\lambda_e$.

Further, the arguments of section \ref{cupproduct} carry over without essential changes.  
\end{proof}

\subsection{Indefinite bilinear forms}
Since $A$ has zero diagonal, the associated bilinear form is indefinite. Now, the classification of indefinite symmetric unimodular bilinear forms over $\Z$, up to equivalence, is known. Recall that bilinear forms associated to matrices $A$ and $B$ are equivalent if there exists an invertible integral 
matrix $M$ such that $A=MBM^{\bot}$, where $M^{\bot}$ denote its transpose. Then, any indefinite unimodular  symmetric $A$
is equivalent to $pE_8\oplus q H$, for some $p,q\in \Z_+$, $q\geq 1$ (see \cite{MH}, II.5.3). Here $E_8$ denotes the Cartan matrix for the 
unimodular $E_8$ lattice and $H$ the metabolic matrix: 
\[ E_8=\left(\begin{array}{cccccccc}
2 & 1 & 0 & 0 & 0 & 0 & 0 & 0 \\
1 & 2 & 1 & 0 & 0 & 0 & 0 & 0  \\
0 & 1 & 2 & 1 & 0 & 0 & 0 & 0  \\
0 & 0 & 1 & 2 & 1 & 0 & 0 & 0   \\
0 & 0 & 0 & 1 & 2 & 1 & 0 & 1    \\
0 & 0 & 0 & 0 & 1 & 2 & 1 & 0   \\
0 & 0 & 0 & 0 & 0 & 1 & 2 & 0   \\
0 & 0 & 0 & 0 & 1 & 0 & 0 & 2   \\
\end{array}
\right), \;\; H=\left(\begin{array}{cc}
0 & 1 \\
1 & 0 \\
\end{array}\right)
\]
Let $\{e_{i,s}, i=1,8\}$ and $\{f_{1,t}, f_{2,t}\}$ be a basis for the inner product space associated to the $s$-th factor $E_8$
and the $t$-th factor $H$, respectively. The  unimodular change of basis 
\[ e'_{i,s}=e_{i,s}+f_{1,1}-f_{2,1}, \;  f'_{j,t}=f_{j,t}, \; 1\leq i\leq 8, 1\leq s\leq p, 1\leq j\leq 2, 1\leq t\leq q \] 
shows that $pE_8\oplus q H$ is equivalent to $pE'_8\oplus qH$, where: 
\[ E'_8=\left(\begin{array}{cccccccc}
0 & -1 & -2 & -2 & -2 & -2 & -2 & -2 \\
-1 & 0 & -1 & -2  & -2  & -2  & -2  & -2   \\
-2 & 1 & 0 & -1 & -2 & -2 & -2 & -2  \\
-2 & -2 & -1 & 0 & -1 & -2 & -2 & -2   \\
-2 & -2 & -2 & -1 & 0 & -1 & -2 & -1    \\
-2 & -2 & -2 & -2 & -1 & 0 & -1 & -2   \\
-2 & -2 & -2 & -2 & -2 & -1 & 0 & -2   \\
-2 & -2 & -2 & -2 & -1 & -2 & -2 & 0   \\
\end{array}
\right)
\]
Therefore unimodular symmetric matrices with zeroes on the diagonal are equivalent to  $pE'_8\oplus qH$, $q\geq 1$.

\subsection{Proof of Theorem \ref{signeven}}
If $\Gamma$ is a tree  and $k=0$ the generic global fiber is $F=S^n$. Assume that $K_A$ is fibered, e.g.  $A=\oplus_{\theta_{2n-1}}A_0$,
for some unimodular symmetric $A_0$. 
The matrix associated to $\phi_{X(\Gamma)}$ is the matrix $A^*$, which has the non-singular minor  $A^{-1}$.
Since $A$ is symmetric and unimodular $AA^{-1}A^{\bot}=A$ so that $A^{-1}$ is equivalent over $\Z$ to $A$. 
We know that $A$ is equivalent to $pE'_8\oplus q H$, for some $q\geq 1,p\geq 0$. We derive that:   
\[\sigma(X(\Gamma))=8p\]  
It suffices to consider then $A_0= p_0E'_8\oplus q_0 H$, with $p_0,q_0 \geq 1$ to obtain blocks $X(\Gamma)$ of non-zero signature. 

Novikov's additivity of the signature shows that the resulting manifold 
$M^{2n}(\Gamma)$ has signature $8p\neq 0$. In particular 
$\varphi(M^{2n}(\Gamma), S^{n})=1$ for even $n$, thereby obtaining another proof of Theorem \ref{generaleven}.

If $k\geq 1$ we consider  either of the graphs $\Gamma$ or $\Gamma_0$ from section \ref{subsignature}.
The matrix associated to $\phi_{X(\Gamma)}$ is $A^*_{\Gamma}$. 
Then, as above we take $A_v=A_w$ equivalent to $pE'_8\oplus q H$ to obtain blocks $X(\Gamma)$ and $X(\Gamma_0)$
of signature $8p\neq 0$.

Then we can consider $M(\Gamma)=X(\Gamma)\cup S^{n+k}\times D^{n-k}$. 
By Novikov's additivity of signatures $\sigma(M(\Gamma))=\sigma(X(\Gamma))$. 
By taking $A=pE'_8\oplus qH$, with $p,q\geq 1$ we obtain $\sigma(M(\Gamma))=8pm\neq 0$. 
Note that  if $n-k$ is even $M(\Gamma)$ cannot fiber over $S^{n-k}$ by the signature criterion.

\begin{remark}
Observe that gluing several such blocks $X(\Gamma_i)$ is only possible when the boundary fibrations 
$\partial X(\Gamma_i)$ are cobounding. The examples obtained in the case 
when $n$ is odd are doubles of such blocks, namely obtained by gluing $X(\Gamma)$ and $\overline{X(\Gamma)}$. Doubles of oriented manifolds are 
bounding and therefore their signatures vanishes. 
\end{remark}

\begin{remark}
All examples obtained by this procedure have signature divisible by $8\theta_{2n-1}$. We can drop 
the factor $\theta_{2n-1}$ above if we work instead of the smooth category in the topological category.  
\end{remark}

\vspace{1cm}
{\bf Acknowledgements.} The authors are grateful to R. Ara\'ujo Dos Santos, M.A.B.Hohlenwerger, O.Saeki and T.O.Souza for useful discussions. 
The first author was supported by ANR 2011 BS 01 020 01 ModGroup. Part of this work was done during his visit at University of Cluj, 
which he would like to thanks warmly for hospitality, with support from CMIRA Explora Pro 1200613701. The second author was supported through GSCE grant number 30257/22.01.2015
financed by the Babe\c s-Bolyai University.

\end{document}